\newcommand{\arxiv}[1]{\href{http://arxiv.org/abs/#1}{\texttt{arXiv:#1}}}
\theoremstyle{plain}
\newtheorem{theorem}{Theorem}
\newtheorem{lemma}[theorem]{Lemma}
\newtheorem{corollary}[theorem]{Corollary}
\newtheorem{proposition}[theorem]{Proposition}
\theoremstyle{definition}
\newtheorem{definition}[theorem]{Definition}
\theoremstyle{remark}
\newtheorem{remark}[theorem]{Remark}
\renewcommand{\emptyset}{\varnothing}
\begin{document}

\title{A Characterization of Circle Graphs in Terms of Multimatroid Representations}
\author{Robert Brijder\\Hasselt University\\Belgium\\{\small {\texttt{robert.brijder@uhasselt.be}}}
\and Lorenzo Traldi\\Lafayette College\\Easton, Pennsylvania 18042, USA\\{\small {\texttt{traldil@lafayette.edu}}}}
\date{}
\maketitle

\begin{abstract}
The isotropic matroid $M[IAS(G)]$ of a looped simple graph $G$ is a binary matroid equivalent to the isotropic system of $G$. In general, $M[IAS(G)]$ is not regular, so it cannot be represented over fields of characteristic $\neq 2$. The ground set of $M[IAS(G)]$ is denoted $W(G)$; it is partitioned into 3-element subsets corresponding to the vertices of $G$. When the rank function of $M[IAS(G)]$ is restricted to subtransversals of this partition, the resulting structure is a multimatroid denoted $\mathcal{Z}_{3}(G)$. In this paper we prove that $G$ is a circle graph if and only if for every field $\mathbb{F}$, there is an $\mathbb{F}$-representable matroid with ground set $W(G)$, which defines $\mathcal{Z}_{3}(G)$ by restriction. We connect this characterization with several other circle graph characterizations that have appeared in the literature.

\bigskip

Keywords. circle graph, multimatroid, delta-matroid, isotropic system, local equivalence, matroid, regularity, representation, unimodular orientation

\bigskip

Mathematics Subject Classification. 05C31

\end{abstract}

\section{Introduction}

In this paper a graph may have loops or parallel edges. A graph is~\emph{simple} if it has neither loops nor parallels, and a~\emph{looped simple} graph has no parallels. An edge consists of two distinct half-edges, each incident on one vertex; and an edge is directed by distinguishing one of its half-edges as initial. The degree of a vertex is the number of incident half-edges, and a graph is $d$-regular if its vertices are all of degree $d$. We use the term~\emph{circuit} to refer to a sequence $v_1,h'_1,h_2,v_2,h'_2,\dots,h'_{k-1},h_k=h_1,v_k=v_1$ of vertices and half-edges, such that: for each $i<k$, $h_i$ and $h'_i$ are half-edges incident on $v_i$; for each $i<k$, $h'_i$ and $h_{i+1}$ are half-edges of the same edge; and the half-edges $h_1,h'_1,\dots,h_{k-1},h'_{k-1}$ are mutually distinct. We consider two circuits to be the same if they differ only in orientation or starting point. The sets of vertices and edges of a graph $G$ are denoted by $V(G)$ and $E(G)$, respectively. We assume that the reader is familiar with the basic concepts of matroid theory, see, e.g., \cite{O}.

If $G$ is a connected graph whose vertices are all of even degree then $G$ has an~\emph{Euler circuit}, i.e., a circuit which includes every edge of $G$. In general, if the vertices of $G$ are all of even degree then $G$ will have an~\emph{Euler system}, i.e., a set which contains precisely one Euler circuit for each connected component of $G$.

Circle graphs are an interesting class of simple graphs, which have been studied by many authors during the last 50 years, see, e.g., \cite{Bu, Bco, EI, F, G, GeelenPhD, GPTC, N, RR, Sp, Tn, Z}. Circle graphs arise from Euler systems of 4-regular graphs in the following way.

\begin{definition}
\label{interlacement}
Let $F$ be a 4-regular graph, with an Euler system $C$. Distinct vertices $v,w\in V(F)$ are \emph{interlaced} with respect to $C$ if they appear on one of the circuits of $C$ in the order $vwvw$ or $wvwv$.
\end{definition}

\begin{definition}
\label{circle}
Let $F$ be a 4-regular graph, with an Euler system $C$. The \emph{interlacement graph} $\mathcal{I}(C)$ is the simple graph with $V(\mathcal{I}(C))=V(F)$, in which distinct vertices are adjacent if and only if they are interlaced with respect to $C$. A simple graph that can be realized as an interlacement graph is a~\emph{circle graph}.
\end{definition}
The purpose of this paper is to present a new matroidal characterization of circle graphs, which uses the following notion.

In this paper, for finite sets $X$ and $Y$ we consider $X \times Y$ matrices whose rows and columns are not ordered, but are instead indexed by $X$ and $Y$, respectively. With this notation, the \emph{adjacency matrix} $A(G)$ of the looped simple graph $G$ is a $V(G) \times V(G)$-matrix over $GF(2)$; for $u,v \in V(G)$, the entry of $A(G)$ indexed by $(u,v)$ is $1$ if and only if there is an edge between $u$ and $v$. (In particular, loops lead to nonzero diagonal entries).

\begin{definition}
\label{isomat}
Let $G$ be a looped simple graph, with adjacency matrix $A(G)$. Then the~\emph{isotropic matroid} $M[IAS(G)]$ is the matroid represented by the matrix
\[
IAS(G)=\begin{pmatrix}
I & A(G) & I+A(G)
\end{pmatrix}
\]
over $GF(2)$, the field with two elements.
\end{definition}

The $I$ in Definition~\ref{isomat} is the $V(G) \times V(G)$ identity matrix, and the notation $IAS(G)=\begin{pmatrix}
I & A(G) & I+A(G)
\end{pmatrix}$
indicates that for each $v \in V(G)$, the $v$ rows of $I$, $A(G)$, and $I+A(G)$ are concatenated to form the $v$ row of $IAS(G)$. The matroid elements corresponding to the $v$ columns of $I$, $A(G)$, and $I+A(G)$ are denoted $\phi_G(v)$, $\chi_G(v)$ and $\psi_G(v)$, respectively, and the ground set of $M[IAS(G)]$ (i.e., the set containing all these elements corresponding to columns of $IAS(G)$), is denoted $W(G)$. A subset of $W(G)$ that includes exactly one of $\phi_G(v),\chi_G(v),\psi_G(v)$ for each $v\in V(G)$ is a \emph{transversal} of $W(G)$; a subset of a transversal is a \emph{subtransversal} of $W(G)$. The families of subtransversals and transversals of $W(G)$ are denoted $\mathcal{S}(G)$ and $\mathcal{T}(G)$, respectively.

Isotropic matroids were introduced in~\cite{Tnewnew}, and have been studied further in~\cite{BT1, BT2, BT3, Tra}. One of the simplest properties of isotropic matroids is this: if $G$ has a connected component with more than two vertices, then $M[IAS(G)]$ is not a regular matroid~\cite{Tnewnew}. Despite this simple property, it turns out that circle graphs are characterized by a special kind of regularity associated with isotropic matroids. This special kind of regularity involves a substructure of $M[IAS(G)]$.

\begin{definition}
\label{threematroid}
If $G$ is a looped simple graph then the~\emph{isotropic 3-matroid} $\mathcal{Z}_3(G)$ is the $3$-tuple $(W(G),\Omega,r)$, where $\Omega = \{ \{\phi_G(v),\chi_G(v),\psi_G(v)\} \mid v \in V(G) \}$ and $r$ is the restriction of
the rank function of $M[IAS(G)]$ to $\mathcal{S}(G)$.
\end{definition}

It follows from \cite[Proposition~41]{Tnewnew} that the isotropic 3-matroid of a graph is a \emph{multimatroid}, a notion introduced by Bouchet~\cite{B1, B2, B3, B4}. Like ordinary matroids, multimatroids are uniquely determined by their bases, circuits, or independent sets. An \emph{independent set} of $\mathcal{Z}_3(G)$ is a subtransversal $I \in \mathcal{S}(G)$ with $r(I) = |I|$. A \emph{circuit} of $\mathcal{Z}_3(G)$ is a subtransversal $C \in \mathcal{S}(G)$ that is minimal (w.r.t.\ inclusion) with the property that it is not an independent set. A \emph{basis} of $\mathcal{Z}_3(G)$ is an independent set of $\mathcal{Z}_3(G)$ that is maximal (w.r.t.\ inclusion) with this property. It follows from \cite[Proposition~5.5]{B1} that all bases of $\mathcal{Z}_3(G)$ have a common cardinality equal to $|\Omega|=|V(G)|$. Consequently, if $X$ is the set of bases (independent sets, circuits, resp.) of $M[IAS(G)]$, then $X \cap \mathcal{S}(G)$ is the set of bases (independent sets, circuits, resp.) of $\mathcal{Z}_3(G)$.

\begin{definition}
Let $\mathbb{F}$ be a field. Then $\mathcal{Z}_3(G)$ is \emph{representable} over $\mathbb{F}$ if there is an $\mathbb{F}$-matrix $B$ with columns indexed by $W(G)$, such that the rank function of $\mathcal{Z}_3(G)$ agrees with the rank function of $B$ when restricted to $\mathcal{S}(G)$.
\end{definition}

To say the same thing in a different way: $\mathcal{Z}_3(G)$ is representable over~$\mathbb{F}$ if and only if there is an~$\mathbb{F}$-representable matroid $M$ on $W(G)$, whose rank function agrees with the rank function of $M[IAS(G)]$ when restricted to $\mathcal{S}(G)$. Such a matroid $M$ is said to~\emph{shelter} $\mathcal{Z}_3(G)$. Notice that $\mathcal{Z}_3(G)$ is $GF(2)$-representable by definition; it is sheltered by $M[IAS(G)]$. Our main result is that representability over other fields characterizes circle graphs.

\begin{theorem}
\label{main}
Let $G$ be a simple graph. Then any one of the following conditions implies the others.

\begin{enumerate}

\item $G$ is a circle graph.

\item The 3-matroid $\mathcal{Z}_{3}(G)$ is representable over every field.

\item The 3-matroid $\mathcal{Z}_{3}(G)$ is representable over some field of characteristic $\neq 2$.

\end{enumerate}
\end{theorem}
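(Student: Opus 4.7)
The implication (2) $\Rightarrow$ (3) is immediate, so the real work is in (1) $\Rightarrow$ (2) and (3) $\Rightarrow$ (1). At the outset I would record two stability properties of the representability condition. First, if $G'$ is obtained from $G$ by local complementation, the multimatroids $\mathcal{Z}_3(G)$ and $\mathcal{Z}_3(G')$ agree up to a permutation of $\phi_G(v),\chi_G(v),\psi_G(v)$ within each class $\Omega_v$, so representability over any fixed field is a local-equivalence invariant. Second, if $H$ is an induced subgraph of $G$, then a sheltering matroid for $\mathcal{Z}_3(G)$ restricts to a sheltering matroid for $\mathcal{Z}_3(H)$ on $W(H)\subseteq W(G)$. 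These two facts, combined with Bouchet's forbidden-subgraph characterization of circle graphs, are the backbone of the argument.

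For (1) $\Rightarrow$ (2), I would exhibit, for a circle graph $G=\mathcal{I}(C)$ arising from an Euler system $C$ of a 4-regular graph $F$, an explicit totally unimodular integer matrix $B$ with columns indexed by $W(G)$ that shelters $\mathcal{Z}_3(G)$. The construction rests on the orientation of the Euler system: at each $v\in V(F)$ the four half-edges split into two incoming and two outgoing, and this ordering sets up a canonical correspondence between the three pairings of the four half-edges into transitions and the three elements of $\Omega_v=\{\phi_G(v),\chi_G(v),\psi_G(v)\}$. Taking the rows of $B$ to be indexed by the edges of $F$, with entries $0,\pm 1$ recording signed incidences dictated by the Euler orientation, I would verify that $r_B(S)=r_{M[IAS(G)]}(S)$ for every $S\in\mathcal{S}(G)$; combinatorially this amounts to relating linear dependencies among columns of $B$ to the connected components of the 4-regular multigraph obtained by smoothing the transitions specified by $S$. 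Total unimodularity of $B$ then delivers representability over every field.

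For (3) $\Rightarrow$ (1), I would argue contrapositively using Bouchet's theorem: a simple graph is a circle graph if and only if no graph in its local equivalence class contains $W_5$, $W_7$, or $BW_3$ as an induced subgraph. By the two stability properties above, if $G$ is not a circle graph and $\mathcal{Z}_3(G)$ were representable over a field $\mathbb{F}$ of characteristic $\neq 2$, then $\mathcal{Z}_3(H)$ would also be $\mathbb{F}$-representable for at least one $H\in\{W_5,W_7,BW_3\}$. It therefore suffices to verify, by a finite direct computation in each of these three cases, that no matrix over a field of characteristic $\neq 2$ realizes the required rank function on subtransversals; concretely, I would expect any would-be representation to force a forbidden substructure such as a Fano submatroid into the sheltering matroid.

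The main technical obstacle I foresee lies in (1) $\Rightarrow$ (2): matching bases of the sheltering matroid $M_B$ with bases of $M[IAS(G)]$ is routine, but matching the rank on \emph{every} subtransversal requires the signs in $B$ to be calibrated so that linear relations among columns correspond exactly to cycle structure in the smoothed 4-regular graph. The obstruction computations in (3) $\Rightarrow$ (1) are finite and should be manageable, but the bookkeeping needed to transport representations cleanly under local complementation (the permutation of each $\Omega_v$) must also be spelled out carefully.
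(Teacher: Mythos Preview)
Your overall architecture matches the paper's: $(2)\Rightarrow(3)$ trivial, $(3)\Rightarrow(1)$ via Bouchet's obstructions plus stability under local equivalence and vertex removal, and $(1)\Rightarrow(2)$ via an explicit integer matrix attached to an Euler system. Two points deserve correction, one minor and one substantive.

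The minor point concerns your second stability property. Plain restriction of a sheltering matroid $M$ to $W(H)$ does \emph{not} in general shelter $\mathcal{Z}_3(H)$: already for $G=K_2$ and $H=G-v$ one has $r_M(\{\chi_G(w)\})=1$ while $r_{M[IAS(H)]}(\{\chi_H(w)\})=0$. The correct operation (the paper's Lemma~11) is to \emph{contract} $\phi_G(v)$ and then delete $\chi_G(v),\psi_G(v)$; this is still a matroid minor, so $\mathbb{F}$-representability is preserved and your strategy for $(3)\Rightarrow(1)$ survives intact. (For the obstructions themselves, the Fano argument handles $BW_3$; for $W_5$ and $W_7$ the paper does a direct determinant computation rather than locating a forbidden minor.)

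The substantive gap is in $(1)\Rightarrow(2)$. Your proposed edge-indexed $\{0,\pm1\}$ matrix is not specified precisely enough to evaluate, and the paper's experience suggests that no such totally unimodular matrix is available. The paper's actual matrix $IAS_{\Gamma^o}(C)=(I\;A\;B)$ is $n\times 3n$ with rows indexed by $V(F)$, built from oriented \emph{fundamental circuits} of $C$; its entries lie in $\{-2,-1,0,1,2\}$ (the $B$ block can have $2$'s), so it is certainly not totally unimodular. What the paper proves is the weaker, but sufficient, property of \emph{transversal unimodularity}: every $n\times n$ submatrix indexed by a transversal has determinant in $\{-1,0,1\}$. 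Crucially, this fails for arbitrary choices of fundamental circuits --- the paper exhibits a three-vertex example where the transversal determinant is $3$ --- and holds only when the fundamental circuits are \emph{based}, i.e.\ all avoid a fixed edge in each component of $F$. The proof that basedness forces transversal unimodularity (Corollary~24) goes through a lengthy row-equivalence-over-$\mathbb{Z}$ argument (Theorem~22) showing that the matrices attached to two different Euler systems with the same base edges are related by integer row operations; this is the technical heart of the paper and is exactly the ``calibration of signs'' you flag as the obstacle. Your proposal does not supply an analogue of this step, and without it the passage from $\mathbb{R}$-representability to representability over fields of positive odd characteristic does not go through.
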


Theorem~\ref{main} shows that the theory of isotropic 3-matroids is quite different from the more familiar theory of graphic matroids: all graphic matroids are representable over all fields, but the only isotropic 3-matroids representable over all fields are those that come from circle graphs.

Here is an outline of the paper. In Section~\ref{sec:reps} we provide some details about sheltering matroids for $\mathcal{Z}_3(G)$. In Section~\ref{sec:proof1} we deduce the implication $3 \Rightarrow 1$ of Theorem~\ref{main} from the results of Section~\ref{sec:reps} and Bouchet's circle graph obstructions theorem~\cite{Bco}. In Section~\ref{sec:circpart} we summarize the signed interlacement machinery of~\cite{Tsign}, which associates matrices over $GF(2)$ and $\mathbb{R}$ with circuit partitions in 4-regular graphs. In Section~\ref{sec:easy} we use this machinery to show that if $G$ is a circle graph, then $\mathcal{Z}_3(G)$ is representable over~$\mathbb{R}$. This argument is fairly direct, and suffices to prove the implication $1 \Rightarrow 3$ of Theorem~\ref{main}. Getting Condition~2 into the picture is more difficult, because the matrix machinery of~\cite{Tsign} fails over fields with $\textrm{char}(\mathbb{F}) > 2$. In Section~\ref{sec:based} we develop a special case of the signed interlacement machinery, which works over all fields. In Section~\ref{sec:proof2} we complete the proof of Theorem~\ref{main}, and in Section~\ref{sec:example} we detail the constructions used in the proof for a small example. In Section~\ref{sec:naji} we discuss the connection between Theorem~\ref{main} and Naji's characterizations of circle graphs \cite{N1, N}. Finally, in Section~\ref{sec:other} we formulate a more detailed form of Theorem~\ref{main} in terms of multimatroids, and we use that to characterize matroid planarity.

\section{Representations of sheltering matroids}
\label{sec:reps}

We begin by recalling the definition of local equivalence.

\begin{definition} Let $G$ be a looped simple graph and $v$ a vertex of $G$.
\begin{itemize}
\item The graph obtained from $G$ by complementing the loop status of $v$ is denoted $G_{\ell}^{v}$.
\item The graph obtained from $G$ by complementing the adjacency status of every pair of neighbors of $v$ is the~\emph{simple local complement} of $G$ at $v$, denoted $G_{s}^{v}$.
\item The graph obtained from $G$ by complementing the adjacency status of every pair of neighbors of $v$ and the loop status of every neighbor of $G$ is
 the~\emph{non-simple local complement} of $G$ at $v$, denoted $G_{ns}^{v}$.
\item The equivalence relation on looped simple graphs generated by loop complementations and local complementations is~\emph{local equivalence}.
\end{itemize}
\end{definition}

For a graph $G$ and $X \subseteq V(X)$, we denote the subgraph of $G$ induced by $V(G) \setminus X$ by $G-X$.
\begin{definition}
Let $G$ and $H$ be looped simple graphs. Then $H$ is a~\emph{vertex-minor} of $G$ if there is a graph $G'$ that is locally equivalent to $G$, such that $H=G'-X$ for some subset $X \subseteq V(G')$.
\end{definition}

In analogy with Definition~\ref{isomat}, if $G$ is a looped simple graph with adjacency matrix $A(G)$, then we define the~\emph{restricted} isotropic matroid $M[IA(G)]$ to be the matroid represented by the matrix
$ IA(G)=\begin{pmatrix}
I & A(G)
\end{pmatrix} $
over $GF(2)$. That is, $M[IA(G)]$ is the submatroid of $M[IAS(G)]$ that includes only the $\phi$ and $\chi$ elements. The~\emph{isotropic 2-matroid} $\mathcal{Z}_{2}(G)$ is the $3$-tuple $(U,\Omega,r)$, where $\Omega = \{\{\phi_G(v),\chi_G(v)\} \mid v \in V(G)\}$, $U = \bigcup \Omega$, and $r$ is the restriction of the rank function of $M[IA(G)]$ to subtransversals $S \in \mathcal{S}(G)$ that involve only $\phi$ and $\chi$ elements. A \emph{sheltering matroid} for $\mathcal{Z}_{2}(G)$ is a matroid with the same ground set $U$, whose rank function restricts to the rank function of $\mathcal{Z}_{2}(G)$. If $\mathbb{F}$ is a field and $\mathcal{Z}_{2}(G)$ has an $\mathbb{F}$-representable sheltering matroid, then $\mathcal{Z}_{2}(G)$ is \emph{$\mathbb{F}$-representable}.

If $\mathcal{Z}_{2}(G)$ or $\mathcal{Z}_{3}(G)$ has an $\mathbb{F}$-representable sheltering matroid of rank $|V(G)|$, then $\mathcal{Z}_{2}(G)$ or $\mathcal{Z}_{3}(G)$ is \emph{strictly} $\mathbb{F}$-representable. Both $\mathcal{Z}_{2}(G)$ and $\mathcal{Z}_{3}(G)$ are strictly $GF(2)$-representable by definition.

\begin{lemma}
\label{twostrict}
If $G$ is a simple graph then $\mathcal{Z}_{2}(G)$ is $\mathbb{F}$-representable if and only if $\mathcal{Z}_{2}(G)$ is strictly $\mathbb{F}$-representable.
\end{lemma}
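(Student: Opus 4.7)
The direction ``strictly $\mathbb{F}$-representable $\Rightarrow$ $\mathbb{F}$-representable'' is immediate from the definitions. For the converse, the plan is to prove the stronger statement that every $\mathbb{F}$-representable sheltering matroid $M$ of $\mathcal{Z}_2(G)$ is automatically of rank $|V(G)|$. Fix a matrix $B$ over $\mathbb{F}$ with columns indexed by $U$ that represents $M$. The transversal $\{\phi_G(v):v\in V(G)\}$ is independent in $\mathcal{Z}_2(G)$, since its $GF(2)$-rank in $IA(G)$ is $|V(G)|$; hence the $\phi$-columns of $B$ are linearly independent and span a $|V(G)|$-dimensional subspace $V_\phi$ of the column space, giving $r(B)\geq |V(G)|$. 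It then suffices to show that every $\chi$-column of $B$ also lies in $V_\phi$.

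The key rank computation involves, for each $v\in V(G)$, the transversal
$T_v=\{\phi_G(u):u\neq v\}\cup\{\chi_G(v)\}$.
Its rank in $\mathcal{Z}_2(G)$ is the $GF(2)$-rank of the submatrix of $IA(G)$ whose columns are the standard basis vectors $e_u$ for $u\neq v$, together with the $v$-th column of $A(G)$. Because $G$ is simple, $A(G)_{vv}=0$, so this last column has zero $v$-th coordinate and therefore lies in the $GF(2)$-span of the $e_u$ with $u\neq v$. Consequently $r_{\mathcal{Z}_2(G)}(T_v)=|V(G)|-1$, and the sheltering property forces $r_M(T_v)=|V(G)|-1$ as well.

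Since $\{\phi_G(u):u\neq v\}$ is an independent subset of $T_v$ of cardinality $|V(G)|-1$, the element $\chi_G(v)$ must lie in $\mathrm{cl}_M(\{\phi_G(u):u\neq v\})\subseteq V_\phi$. Applying this at each $v$ shows that all columns of $B$ lie in $V_\phi$, so $r(B)\leq |V(G)|$; combined with the earlier bound we conclude $r(B)=|V(G)|$. I do not foresee a substantive obstacle: the proof uses only the vanishing diagonal of $A(G)$ and the definition of sheltering, and it establishes the stronger fact that rank $|V(G)|$ is automatic for every $\mathbb{F}$-representable sheltering matroid of $\mathcal{Z}_2(G)$.
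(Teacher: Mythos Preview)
Your argument is correct and is essentially the same as the paper's: both show that each $\chi_G(v)$ lies in the closure of $\{\phi_G(w):w\neq v\}$ (the paper phrases this as the transversal $T_v$ being dependent, you compute its rank explicitly as $|V(G)|-1$), and both combine this with the independence of the $\phi$-transversal to conclude $r(M)=|V(G)|$. One small notational slip: writing $\mathrm{cl}_M(\{\phi_G(u):u\neq v\})\subseteq V_\phi$ mixes a subset of the ground set with a linear subspace, but the intended meaning is clear.
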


\begin{proof}
For each $v \in V(G)$, the transversal $\{\chi_{G}(v)\}\cup\{\phi_{G}(w)\mid v\neq w \in V(G)\}$ is dependent in $M[IAS(G)]$. If $M$ is a sheltering matroid for $\mathcal{Z}_{2}(G)$, then this transversal must be dependent in $M$ too. Hence the $\phi_G$ elements span $M$, so the rank $r(M)$ is no more than the number of $\phi_G$ elements, which is $|V(G)|$. On the other hand, $\{ \phi_G(v) \mid v \in V(G)\}$ is an independent transversal of $M[IAS(G)]$, so it is an independent set of $M$; consequently $r(M) \geq |V(G)|$.
\end{proof}

\begin{lemma}
\label{obvious}If $\mathcal{Z}_{3}(G)$ is (strictly) $\mathbb{F}$-representable then $\mathcal{Z}_{2}(G)$ is (strictly) $\mathbb{F}$-representable.
\end{lemma}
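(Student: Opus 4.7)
The plan is to take any $\mathbb{F}$-representable sheltering matroid $M$ for $\mathcal{Z}_3(G)$ and to produce an $\mathbb{F}$-representable sheltering matroid $M'$ for $\mathcal{Z}_2(G)$ by simply deleting the $\psi_G$ elements. Concretely, I would let $M' = M \setminus \{\psi_G(v) \mid v \in V(G)\}$, so that $M'$ is a matroid on the ground set $U = \bigcup \{\{\phi_G(v),\chi_G(v)\}\mid v \in V(G)\}$. Since matroid deletion corresponds to deleting columns of a representing matrix, $M'$ is automatically $\mathbb{F}$-representable whenever $M$ is.

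The next step is to verify that $M'$ shelters $\mathcal{Z}_2(G)$, i.e., that the rank function of $M'$ restricts correctly on $\phi/\chi$-only subtransversals. Let $S \in \mathcal{S}(G)$ be a subtransversal using only $\phi_G$ and $\chi_G$ elements. Then $S \subseteq U$, so the rank of $S$ in $M'$ equals the rank of $S$ in $M$ (deletion does not affect ranks of subsets of the remaining ground set). By hypothesis the latter equals the rank of $S$ in $M[IAS(G)]$, and since $M[IA(G)]$ is obtained from $M[IAS(G)]$ by deleting the $\psi_G$ columns, the rank of $S$ in $M[IAS(G)]$ equals its rank in $M[IA(G)]$, which is by definition the rank of $S$ in $\mathcal{Z}_2(G)$. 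This chain of equalities gives exactly the sheltering condition, so $\mathcal{Z}_2(G)$ is $\mathbb{F}$-representable.

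For the strict version of the claim, the cleanest route is to invoke Lemma~\ref{twostrict}: since $\mathbb{F}$-representability and strict $\mathbb{F}$-representability coincide for $\mathcal{Z}_2(G)$, the previous paragraph already gives strict $\mathbb{F}$-representability of $\mathcal{Z}_2(G)$ whenever $\mathcal{Z}_3(G)$ is $\mathbb{F}$-representable, regardless of whether we started from a strict representation for $\mathcal{Z}_3(G)$. Alternatively, one can argue directly: if $\mathrm{rank}(M)=|V(G)|$, then $\{\phi_G(v)\mid v\in V(G)\}$ is an independent transversal of $M[IAS(G)]$ and hence independent in $M$ (by the sheltering property applied to this subtransversal); it lies in $U$, so it is independent in $M'$, forcing $\mathrm{rank}(M')\geq |V(G)|$, while the bound $\mathrm{rank}(M')\leq |V(G)|$ follows exactly as in the proof of Lemma~\ref{twostrict}. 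There is no real obstacle here; the lemma is essentially a bookkeeping statement about restricting a sheltering representation to the $\phi/\chi$ columns.
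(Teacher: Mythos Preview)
Your argument is correct and matches the paper's approach: restrict the sheltering matroid $M$ to the $\phi/\chi$ ground set and observe that ranks of subtransversals are unchanged. For the strict case the paper argues even more directly than either of your routes: since $r(M)=|V(G)|$ and $\Phi(G)=\{\phi_G(v)\mid v\in V(G)\}$ is independent in $M$, it is a basis of $M$ and hence of the restriction $M'$, giving $r(M')=|V(G)|$ immediately. Your appeal to Lemma~\ref{twostrict} for the upper bound is a detour (and technically that lemma is stated only for loop-free $G$); the simpler observation $r(M')\le r(M)=|V(G)|$, which follows because $M'$ is a deletion of $M$, is all that is needed.
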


\begin{proof}
If $M$ is a sheltering matroid for $\mathcal{Z}_{3}(G)$ then the submatroid $N$ consisting of elements of $M$ that correspond to elements of $\mathcal{Z}_{2}(G)$ is a sheltering matroid. If $M$ is a strict sheltering matroid then $\Phi(G)=\{\phi_G(v) \mid v \in V(G)\}$ is a basis of $M$. As $N$ contains $\Phi(G)$, $\Phi(G)$ is also a basis of $N$; hence $r(N)=|V(G)|$.
\end{proof}

\begin{lemma}
\label{localeq}If $G$ and $H$ are locally equivalent up to isomorphism, then $\mathcal{Z}_{3}(G)$ is (strictly) $\mathbb{F}$-representable if and only if $\mathcal{Z}_{3}(H)$ is (strictly) $\mathbb{F}$-representable.
\end{lemma}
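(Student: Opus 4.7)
My plan is to reduce the claim to the generators of local equivalence—loop complementation, simple local complementation, and non-simple local complementation at a single vertex $v$—together with graph isomorphism. Graph isomorphism is immediate: an isomorphism $G\to H$ induces a bijection $W(G)\to W(H)$ preserving $\Omega$ and carrying $M[IAS(G)]$ to $M[IAS(H)]$, so it transports any sheltering matroid for $\mathcal{Z}_3(G)$ to one for $\mathcal{Z}_3(H)$ of the same rank and the same field of representability. Hence I only need to treat the three local moves, in each case producing a bijection $\pi\colon W(G)\to W(G')$ with two key properties: (a) $\pi$ preserves the partition $\Omega$, merely permuting the three labels $\phi$, $\chi$, $\psi$ within each triple according to a rule determined by the move, and (b) $\pi$ extends to a matroid isomorphism $M[IAS(G)]\cong M[IAS(G')]$.

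The loop-complementation case is easy, and I would do it by direct computation from $IAS(G)=(I\mid A(G)\mid I+A(G))$: toggling the $(v,v)$ entry of $A(G)$ replaces the column $\chi_G(v)$ by $\chi_G(v)+\phi_G(v)=\psi_G(v)$ and the column labelled $\psi_G(v)$ of $I+A(G)$ by $\chi_G(v)$, while fixing every other column. So $\pi_\ell^v$ simply swaps $\chi$ and $\psi$ at $v$, and the equality of column sets shows that the matroid $M[IAS(G_\ell^v)]$ is $\pi_\ell^v$-isomorphic to $M[IAS(G)]$. For simple and non-simple local complementation at $v$, the corresponding permutations act on the triples of $v$ and its neighbors, and the required triple-preserving matroid isomorphism is already established in the theory of isotropic matroids in~\cite{Tnewnew} via invertible row/column operations on $IAS(G)$; rather than re-derive it, I would cite it.

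Granting (a) and (b), the lemma follows by transport of structure. If $M$ is an $\mathbb{F}$-representable sheltering matroid for $\mathcal{Z}_3(G)$, then relabeling its ground set via $\pi$ yields an $\mathbb{F}$-representable matroid $M'$ on $W(G')$ with $r(M')=r(M)$. Property~(a) ensures that $\pi$ sends $\mathcal{S}(G)$ bijectively to $\mathcal{S}(G')$, and property~(b) then gives that the rank function of $M'$ on $\mathcal{S}(G')$ coincides with the restriction of the rank function of $M[IAS(G')]$ to $\mathcal{S}(G')$; hence $M'$ shelters $\mathcal{Z}_3(G')$. Strict representability is preserved because $|V(G)|=|V(G')|$ for each of the three moves. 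The converse direction is symmetric, since the bijections $\pi_\ell^v$, $\pi_s^v$, $\pi_{ns}^v$ are all involutions on the same underlying ground set (up to the identification).

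The main obstacle is verifying property~(b) for the two local-complementation moves, i.e., pinning down the explicit triple-by-triple permutation and checking the matroid isomorphism $M[IAS(G)]\cong M[IAS(G_s^v)]$, and similarly for $G_{ns}^v$. This is where the nontrivial linear-algebraic content lies; but it is precisely the compatibility of local complementation with $M[IAS(G)]$ already worked out in~\cite{Tnewnew}, so once that result is invoked the proof of the lemma is a purely formal transport argument.
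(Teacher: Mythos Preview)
Your proposal is correct and follows the same approach as the paper, which simply cites \cite{Tnewnew} for the fact that $\mathcal{Z}_3(G)$ and $\mathcal{Z}_3(H)$ are isomorphic as $3$-matroids; you have essentially unpacked that citation by walking through the generators of local equivalence and then made explicit the transport-of-structure step for sheltering matroids.
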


\begin{proof}
The 3-matroids $\mathcal{Z}_{3}(G)$ and $\mathcal{Z}_{3}(H)$ are isomorphic, see \cite{Tnewnew}.
\end{proof}

\begin{lemma}
\label{vminor}If $\mathcal{Z}_{3}(G)$ is (strictly) $\mathbb{F}$-representable and $H$ is a vertex-minor of $G$, then $\mathcal{Z}_{3}(H)$ is (strictly) $\mathbb{F}$-representable.
\end{lemma}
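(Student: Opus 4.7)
The plan is to reduce to the case of a single vertex deletion. A vertex-minor is obtained from $G$ by a sequence of local complementations, loop complementations, and vertex deletions; by Lemma~\ref{localeq} the first two operations preserve (strict) $\mathbb{F}$-representability of $\mathcal{Z}_3$, so it suffices to show: if $\mathcal{Z}_3(G)$ is (strictly) $\mathbb{F}$-representable and $v\in V(G)$, then $\mathcal{Z}_3(G-v)$ is (strictly) $\mathbb{F}$-representable.

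Given a sheltering matroid $M$ for $\mathcal{Z}_3(G)$, the construction I would try is the matroid minor
\[
N = M / \phi_G(v) \setminus \{\chi_G(v),\psi_G(v)\},
\]
whose ground set is naturally identified with $W(G-v)$ via $\phi_G(w)\leftrightarrow\phi_{G-v}(w)$, $\chi_G(w)\leftrightarrow\chi_{G-v}(w)$, $\psi_G(w)\leftrightarrow\psi_{G-v}(w)$ for $w\neq v$. Since matroid minors preserve $\mathbb{F}$-representability, $N$ is $\mathbb{F}$-representable, so it only remains to verify that $N$ shelters $\mathcal{Z}_3(G-v)$.

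The central computation is as follows. First, because $\phi_G(v)$ is the $v$-indexed standard basis vector in $IAS(G)$, contracting it in $M[IAS(G)]$ corresponds to removing row $v$ from $IAS(G)$; the remaining columns $\phi_G(w),\chi_G(w),\psi_G(w)$ for $w\neq v$ then coincide with the columns of $IAS(G-v)$. Hence as binary matroids
\[
M[IAS(G)] / \phi_G(v)\setminus\{\chi_G(v),\psi_G(v)\} = M[IAS(G-v)].
\]
Now let $S$ be a subtransversal of $W(G-v)$. Then $S\cup\{\phi_G(v)\}$ is a subtransversal of $W(G)$, and $\{\phi_G(v)\}$ is by itself an independent subtransversal, so $\phi_G(v)$ is a non-loop in $M$. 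Using the sheltering property of $M$ at both $S\cup\{\phi_G(v)\}$ and $\{\phi_G(v)\}$, and the identity above, I would compute
\[
r_N(S) = r_M(S\cup\{\phi_G(v)\}) - 1 = r_{M[IAS(G)]}(S\cup\{\phi_G(v)\}) - 1 = r_{M[IAS(G-v)]}(S),
\]
which is exactly the rank function of $\mathcal{Z}_3(G-v)$ on $S$. This is the one spot where any real verification happens, and it is the main (minor) obstacle: making sure each invocation of the sheltering hypothesis is applied to a genuine subtransversal of $W(G)$.

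For the strict case, assume $r(M)=|V(G)|$. Contraction of the non-loop $\phi_G(v)$ drops the rank by $1$, and the subsequent deletion cannot drop it further because $\{\phi_G(w)\mid w\neq v\}$ corresponds to a transversal of $W(G-v)$ that is independent in $M[IAS(G-v)]$, hence independent in $N$ by the sheltering property just established. Therefore $r(N)=|V(G)|-1=|V(G-v)|$, and $N$ is a strict $\mathbb{F}$-representable sheltering matroid for $\mathcal{Z}_3(G-v)$.
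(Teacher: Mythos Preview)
Your proof is correct and follows essentially the same approach as the paper: reduce to a single vertex deletion via Lemma~\ref{localeq}, form the minor $M/\phi_G(v)\setminus\{\chi_G(v),\psi_G(v)\}$, and handle the strict case by the rank bound from the independent set $\{\phi_G(w)\mid w\neq v\}$. Your version simply fills in the verification that this minor shelters $\mathcal{Z}_3(G-v)$, which the paper asserts without detail.
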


\begin{proof}
Suppose $M$ is a matroid that shelters $\mathcal{Z}_{3}(G)$. If $v\in V(G)$, then $M^{\prime}=(M/\phi_{G}(v))-\chi_{G}(v)-\psi_{G}(v)$ is a matroid that shelters $\mathcal{Z}_{3}(G-v)$. Moreover, the rank $r(M^{\prime})$ is no more than $r(M)-1$, because $\phi_{G}(v)$ has been contracted, and no less than $|V(G)|-1$, because $\{\phi_{G}(w) \mid v\neq w \in V(G) \}$ is an independent set of $\mathcal{Z}_{3}(G-v)$. Consequently if $r(M)=|V(G)|$, then $r(M')=|V(G)|-1=|V(G-v)|$.

The lemma follows from these observations and Lemma \ref{localeq}.
\end{proof}

We remark that Lemma~\ref{vminor} does not hold for $\mathcal{Z}_{2}(G)$: a graph $G$ such that $\mathcal{Z}_{2}(G)$ is strictly $\mathbb{F}$-representable may have a vertex-minor $H$ such that $\mathcal{Z}_{2}(H)$ is not $\mathbb{F}$-representable. Some examples are given at the end of Section~\ref{sec:proof1}.

\begin{lemma}
\label{standardtwo}

Suppose $G$ is a simple graph and $\mathcal{Z}_{2}(G)$ is $\mathbb{F}$-representable. Then there is a $V(G) \times V(G)$ matrix $A$ with entries in $\mathbb{F}$, with these properties.

\begin{enumerate}

\item $A$ has nonzero entries in precisely the same places where the adjacency matrix $A(G)$ has nonzero entries.

\item
$\begin{pmatrix}
I & A\\
\end{pmatrix}$,
with $I$ an identity matrix, represents a matroid that shelters $\mathcal{Z}_{2}(G)$, with the columns of $I$ and $A$ corresponding to the $\phi_G$ and $\chi_G$ elements, respectively.
\end{enumerate}
\end{lemma}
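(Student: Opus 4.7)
The plan is to reduce to a strict sheltering matroid, bring its $\mathbb{F}$-representation into standard form along the $\phi_G$-basis, and then read off the zero--nonzero pattern of the resulting right block $A$ one entry at a time by comparing ranks of carefully chosen transversals against those of $M[IA(G)]$.

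First I would apply Lemma~\ref{twostrict} to replace the given sheltering matroid by a strict one $M$ of rank $|V(G)|$. The proof of that lemma already shows that $\Phi(G)=\{\phi_G(v):v\in V(G)\}$ is a basis of $M$, so any $\mathbb{F}$-representation of $M$ can be row-reduced to the form $(I\mid A)$ in which the columns of $I$ are indexed (over $V(G)$) by the $\phi_G$ elements and the columns of $A$ by the $\chi_G$ elements. Property~2 of the lemma then holds by construction, so the remaining work is to check that $A$ has the same support as $A(G)$.

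To pin down the diagonal, I would evaluate the rank of the transversal $T_v=\{\chi_G(v)\}\cup\{\phi_G(w):w\neq v\}$ in both $(I\mid A)$ and $M[IA(G)]$. In $(I\mid A)$ this rank is $|V(G)|$ if $A_{v,v}\neq 0$ and $|V(G)|-1$ otherwise; in $M[IA(G)]$ it is $|V(G)|$ iff $v$ carries a loop in $G$. Since $G$ is simple, the sheltering condition forces $A_{v,v}=0$ for every $v$. For the off-diagonal entries I would fix distinct $u,w$ and evaluate the rank of the transversal $T_{u,w}=\{\chi_G(u),\chi_G(w)\}\cup\{\phi_G(v):v\notin\{u,w\}\}$. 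Using the $e_v$ columns (for $v\notin\{u,w\}$) to cancel the corresponding coordinates in the two $\chi_G$ columns shows that this rank equals $(|V(G)|-2)$ plus the rank of the $\{u,w\}$-principal submatrix of the right block. With the diagonal of $A$ already known to vanish, this $2\times 2$ submatrix is $\left(\begin{smallmatrix}0&A_{u,w}\\ A_{w,u}&0\end{smallmatrix}\right)$, of rank $0$ or $2$; the corresponding submatrix of $A(G)$ has rank $0$ when $u\not\sim w$ in $G$ and rank $2$ when $u\sim w$. Matching the ranks gives $A_{u,w}\neq 0\iff u\sim w$, which is property~1.

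The only subtle point I anticipate is the choice of transversals: because $\{\phi_G(u),\chi_G(u)\}$ is not itself a subtransversal, no single subtransversal probes the entry $A_{u,w}$ in isolation, so one is forced to look at pairs of $\chi_G$ columns and read off a $2\times 2$ principal submatrix. Once this observation is made, the rank comparisons are routine and no property of $\mathbb{F}$ beyond being a field is needed.
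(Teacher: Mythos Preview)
Your argument is correct, but your claim that one is ``forced to look at pairs of $\chi_G$ columns'' is mistaken, and the paper's proof exploits exactly the simpler route you thought was unavailable. For the off-diagonal entry $A_{v,w}$ the paper uses the $(n-1)$-element subtransversal $\{\chi_G(w)\}\cup\{\phi_G(x):x\notin\{v,w\}\}$, which omits the skew class of $v$ entirely; once $A_{w,w}=0$ is known, the rank of these columns in $(I\mid A)$ is $n-1$ precisely when $A_{v,w}\neq 0$, so a single entry is read off directly. Your two-$\chi$ transversal $T_{u,w}$ also works, but note that the assertion ``of rank $0$ or $2$'' for the $A$-submatrix $\left(\begin{smallmatrix}0&A_{u,w}\\A_{w,u}&0\end{smallmatrix}\right)$ is not justified a priori; it is only after matching with the symmetric $A(G)$-submatrix (whose rank genuinely is $0$ or $2$) that the rank-$1$ case is excluded and both $A_{u,w}$ and $A_{w,u}$ are pinned down together. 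The paper also skips the separate appeal to Lemma~\ref{twostrict}, instead row-reducing an arbitrary representation to $\left(\begin{smallmatrix}I&A\\0&A'\end{smallmatrix}\right)$ and using the same dependent transversal $T_v$ to force $A'=0$ and $A_{v,v}=0$ in one stroke.
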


\begin{proof}
As $\mathcal{Z}_{2}(G)$ is $\mathbb{F}$-representable, for some $m$ there is an $m\times 2n$ matrix $Q$ with entries in $\mathbb{F}$, which represents a matroid that shelters $\mathcal{Z}_{2}(G)$. We presume the columns of $Q$ are ordered with the $\phi$ columns first, and then the $\chi$ columns. The $\phi_G$ elements of $M[IAS(G)]$ are independent, so the first $n$ columns of $Q$ are linearly independent. It follows that elementary row operations can be used to bring $Q$ into the form
\[
Q'=
\begin{pmatrix}
I & A \\
0 & A'
\end{pmatrix}
\text{.}
\]
Elementary row operations have no effect on the matroid represented by a
matrix, so $Q'$ represents a matroid that shelters $\mathcal{Z}_{2}(G)$.

If any entry of $A'$ is not $0$, then the corresponding column of $Q'$ is not included in the span of the columns of $I$. Consequently if $v\in V(G)$ is the vertex whose $\chi_{G}(v)$ element corresponds to this column of $Q'$, then $\{\chi_{G}(v)\}\cup\{\phi_{G}(w)\mid w\neq v\}$ is an independent set of the matroid represented by $Q'$. This set is a transversal of $W(G)$, so it is also an independent set of $M[IAS(G)]$. This is incorrect, however; $\{\chi_{G}(v)\}\cup\{\phi_{G}(w)\mid w\neq v\}$ is dependent in $M[IAS(G)]$, because the $v$ entry of the $\chi_{G}(v)$ column of $IAS(G)$ is $0$. We conclude that $A'=0$.

The fact that $\{\chi_{G}(v)\}\cup\{\phi_{G}(w)\mid w\neq v\}$ is a dependent set of $M[IAS(G)]$ also implies that the $v$ entry of the $\chi_{G}(v)$ column of $Q'$ is $0$. That is, every diagonal entry of $A$ is $0$.

Now, suppose $v\neq w\in V(G)$. The subtransversal $\{\chi_{G}(w)\}\cup
\{\phi_{G}(x)\mid x\notin\{v,w\}\}$ of $W(G)$ is independent in $M[IAS(G)]$ if and only if $v$ and $w$ are neighbors in $G$. By hypothesis, this subtransversal is independent in $M[IAS(G)]$ if and only the corresponding columns of $Q'$ are linearly independent. As the columns of $Q'$ corresponding to $\phi_{G}$ elements are columns of the identity matrix, and the $w$ entry of the $\chi_{G}(w)$ column of $Q'$ is $0$, it follows that the $v$ entry of this column of $Q'$ is $0$ if and only if the $v$ entry of the corresponding column of $IAS(G)$ is $0$.
\end{proof}

\begin{lemma}
\label{standardthree}

Let $G$ be a simple graph with $n$ vertices, such that $\mathcal{Z}_{3}(G)$ is $\mathbb{F}$-representable. Then for some $m\geq n$, there is an $m\times3n$ matrix
\[
P=
\begin{pmatrix}
I & A & B\\
0 & 0 & C
\end{pmatrix}
\]
with entries in $\mathbb{F}$, which satisfies the following.

\begin{enumerate}

\item $P$ represents a matroid that shelters $\mathcal{Z}_{3}(G)$, with the columns of $I$, $A$ and $B$ corresponding to the $\phi_G$, $\chi_G$ and $\psi_G$ elements, respectively.

\item The submatrix $I$ is an $n\times n$ identity matrix.

\item The submatrix $A$ has nonzero entries in precisely the same places where the adjacency matrix $A(G)$ has nonzero entries.

\end{enumerate}

If $\mathcal{Z}_{3}(G)$ is strictly $\mathbb{F}$-representable, then there is a matrix $P$ which moreover satisfies the three conditions below.

\begin{enumerate}
\setcounter{enumi}{3}
\item $C=0$.

\item The diagonal entries of the submatrix $B$ are all equal to $1$.

\item If $v$ and $w$ are neighbors in $G$, then $B_{vw} B_{wv} = 1$.

\end{enumerate}

\end{lemma}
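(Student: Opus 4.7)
The plan is to imitate and extend the proof of Lemma~\ref{standardtwo}, using the same dictionary between column-dependences in the representing matrix and transversal-dependences in $M[IAS(G)]$. Starting from any $m\times 3n$ matrix over $\mathbb{F}$ that represents a sheltering matroid for $\mathcal{Z}_{3}(G)$, with the $\phi_{G}$, $\chi_{G}$, $\psi_{G}$ columns listed in that order, the fact that $\{\phi_{G}(v)\mid v\in V(G)\}$ is independent in $M[IAS(G)]$ forces the first $n$ columns to be linearly independent over $\mathbb{F}$. Elementary row operations, which preserve the represented matroid, then bring the matrix into the block form
\[
Q'=\begin{pmatrix} I & A & B \\ 0 & A' & C \end{pmatrix},
\]
with $I$ the $V(G)\times V(G)$ identity, yielding conditions~1 and~2.

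For condition~3, I would reuse the key observation of Lemma~\ref{standardtwo}: the transversal $\{\chi_{G}(v)\}\cup\{\phi_{G}(w)\mid w\neq v\}$ is dependent in $M[IAS(G)]$ because the $v$-entry of the $\chi_{G}(v)$ column of $IAS(G)$ is $0$. Since $M[IAS(G)]$ and the sheltering matroid agree on this transversal, the $\chi_{G}(v)$ column of $Q'$ must lie in the span of the identity columns indexed by $w\neq v$, which have $0$ in the bottom block and $0$ in their $v$-entry; this pins down $A'_{v}=0$ and $A_{vv}=0$ for every $v$. Running the same column-span argument with the subtransversal $\{\chi_{G}(w)\}\cup\{\phi_{G}(u)\mid u\notin\{v,w\}\}$, which is independent in $M[IAS(G)]$ precisely when $v$ and $w$ are neighbors, then shows $A_{vw}\neq 0$ exactly for neighbor pairs, so $A$ and $A(G)$ have matching zero patterns.

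For the strict case I would take $m=n$, making condition~4 automatic. The transversal $\{\psi_{G}(v)\}\cup\{\phi_{G}(w)\mid w\neq v\}$ is independent in $M[IAS(G)]$ because $(I+A(G))_{vv}=1$, which forces $B_{vv}\neq 0$; rescaling each $\psi_{G}(v)$ column by $B_{vv}^{-1}$, which changes neither the represented matroid nor the $I$ and $A$ blocks, delivers condition~5. For condition~6, pick a pair of neighbors $v,w$ and note that the transversal $\{\psi_{G}(v),\psi_{G}(w)\}\cup\{\phi_{G}(u)\mid u\notin\{v,w\}\}$ is dependent in $M[IAS(G)]$ because in $GF(2)$ the determinant $\det\begin{pmatrix} 1 & A(G)_{vw}\\ A(G)_{wv} & 1\end{pmatrix}$ vanishes when $v$ and $w$ are adjacent. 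Hence the corresponding $2\times 2$ submatrix $\begin{pmatrix} 1 & B_{vw}\\ B_{wv} & 1\end{pmatrix}$ of $Q'$ is also singular, giving $B_{vw}B_{wv}=1$.

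The bookkeeping for the row reduction and the transversal-to-column dictionary is routine, and most of the argument simply parallels Lemma~\ref{standardtwo} one column-class at a time. The only genuine subtlety is the order of operations in the strict case: without first normalizing to $B_{vv}=1$, the singularity of the $2\times 2$ block only yields $B_{vw}B_{wv}=B_{vv}B_{ww}$, so condition~5 is what upgrades condition~6 from a mere proportionality to the exact identity $B_{vw}B_{wv}=1$.
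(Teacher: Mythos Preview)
Your proposal is correct and follows essentially the same route as the paper: row-reduce an arbitrary representation into the block form $\begin{pmatrix} I & A & B\\ 0 & A' & C\end{pmatrix}$, invoke the argument of Lemma~\ref{standardtwo} to get $A'=0$ and the zero pattern of $A$, and then in the strict case use the transversals $\{\psi_G(v)\}\cup\{\phi_G(w)\mid w\neq v\}$ and $\{\psi_G(v),\psi_G(w)\}\cup\{\phi_G(u)\mid u\notin\{v,w\}\}$ to force $B_{vv}\neq 0$ and (after rescaling) $B_{vw}B_{wv}=1$. Your remark that the normalization $B_{vv}=1$ must precede the derivation of property~6 is a useful clarification that the paper leaves implicit.
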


\begin{proof}Let $Q$ be an $m \times 3n$ matrix with entries in $\mathbb{F}$, which represents a matroid that shelters $\mathcal{Z}_{3}(G)$. We presume the columns of $Q$ are ordered with the $\phi$ columns first, then the $\chi$ columns, and then the $\psi$ columns. The $\phi_G$ elements of $M[IAS(G)]$ are independent, so the first $n$ columns of $Q$ are linearly independent. Elementary row operations can be used to bring $Q$ into the form
\[
P=
\begin{pmatrix}
I & A & B\\
0 & A' & C
\end{pmatrix}
\text{.}
\]
Elementary row operations have no effect on the matroid represented by a
matrix, so $P$ inherits property 1 from $Q$.

The proof of Lemma~\ref{standardtwo} shows that $A'=0$ and $A$ satisfies property 3.

If $\mathcal{Z}_{3}(G)$ is strictly $\mathbb{F}$-representable, we may start with a matrix $Q$ of rank $n$. Then $P$ also is of rank $n$, so $A'=C=0$.

If $v \in V(G)$ then $\{\phi_G(w) \mid v \neq w \} \cup \{\psi_G(v)\}$ is a transversal of $W(G)$, and is independent in $M[IAS(G)]$. It follows that the corresponding columns of $P$ are linearly independent; this requires that $B_{vv}\neq 0$. We may multiply the $v$ column of $B$ by $1/B_{vv}$, without affecting the represented matroid. For simplicity we still use $B$ and $P$ to denote the matrices resulting from these column multiplications.

It remains to verify property 6. If $v$ and $w$ are neighbors in $G$ then $\{\phi_G(x) \mid x \notin \{v,w\}\} \cup \{\psi_G(v),\psi_G(w)\}$ is a transversal of $W(G)$, which is dependent in $M[IAS(G)]$. It follows that the corresponding columns of $P$ are linearly dependent, so
\[
\begin{pmatrix}
1 & B_{vw}\\
B_{wv} & 1
\end{pmatrix}
\]
is a singular matrix.
\end{proof}

Notice that property 3 of Lemma~\ref{standardthree} specifies the locations of nonzero entries in the submatrix $A$. We do not have such specific information about $B$, however. Properties 5 and 6 guarantee nonzero entries on the diagonal, and in locations that correspond to edges of $G$; but there may be nonzero entries in other places, and these locations may vary from one sheltering matroid to another.

\section{Bouchet's obstructions}
\label{sec:proof1}

In this section we prove the implication $3 \Rightarrow 1$ of Theorem~\ref{main}: if $\mathcal{Z}_3(G)$ is $\mathbb{F}$-representable over some field with $\textrm{char}(\mathbb{F}) \neq 2$, then $G$ is a circle graph. The crucial ingredient of the proof is the following well-known result.

\begin{theorem}
\label{Bthm}
(Bouchet's circle graph obstructions theorem~\cite{Bco}) A simple graph is not a circle graph if and only if it has one of the graphs pictured in Figure~\ref{circmf4} as a vertex-minor.
\end{theorem}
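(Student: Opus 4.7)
The plan is a two-part argument, with the forward direction being the tractable half and the backward direction being Bouchet's deep classification.

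For the forward direction (vertex-minor containing an obstruction implies non-circle), I would first establish that the class of circle graphs is closed under vertex-minors. Using the correspondence between Euler systems of 4-regular graphs and the operations on the associated interlacement graph, simple local complementation at a vertex $v$ corresponds to a $\kappa$-transposition of the Euler system at $v$, producing a new Euler system of the same 4-regular graph $F$; loop complementation has no effect on the interlacement graph of a simple graph; and vertex deletion corresponds to smoothing the 4-valent vertex $v$ in $F$, yielding a 4-regular graph whose induced Euler system realizes the deleted graph. Hence any vertex-minor of an interlacement graph is itself an interlacement graph. It then suffices to check that the three graphs in Figure~\ref{circmf4} are not circle graphs. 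This is a finite verification and can be carried out by applying Lemma~\ref{vminor} in the contrapositive together with the characterization in Theorem~\ref{main}: exhibit, for each of the three graphs, a subtransversal whose rank in $\mathcal{Z}_3$ cannot be matched by any $\mathbb{F}$-sheltering matroid with $\mathrm{char}(\mathbb{F}) \neq 2$. Alternatively, one can enumerate, for each small candidate 4-regular graph of the appropriate size, all Euler systems and compare interlacement graphs directly.

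For the backward direction, the plan is to take a vertex-minor-minimal counterexample $G$: a simple graph that is not a circle graph, but every proper vertex-minor of $G$ is. Translating to the isotropic-system language, $G$ gives rise to a minor-minimal non-graphic isotropic system, where ``graphic'' means arising from an Euler system of a 4-regular graph. The structural task is then to show that such minimal non-graphic isotropic systems form exactly the three-element list pictured. The attack is a case analysis on the local structure at each vertex: for each vertex $v$ of $G$, look at its neighborhood and apply simple local complementations and pivots to normalize the neighborhood; minimality of $G$ says that any such operation followed by a vertex deletion produces a circle graph, which tightly constrains the possible local configurations. One then bootstraps these local constraints into global ones by tracking how local complementation acts on pairs and triples of vertices, gradually forcing $G$ to coincide with one of the three named graphs.

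The main obstacle is this final combinatorial enumeration. Controlling the number of cases that arise when one simultaneously accounts for local complementations, loop complementations, and pivots on a small but nontrivial graph is delicate, and verifying that no additional minimal obstruction is missed is the heart of Bouchet's paper. Since this classification is precisely the content of Theorem~\ref{Bthm} and was established in \cite{Bco} using the full apparatus of isotropic systems and their Tutte-style minor theory, my proposal at this point would be to invoke that argument directly rather than attempt to re-derive the enumeration. For the applications to Theorem~\ref{main} in the next section, only the statement of Theorem~\ref{Bthm} is needed, so treating it as a black box is consistent with the structure of the paper.
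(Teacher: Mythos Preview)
The paper does not prove Theorem~\ref{Bthm}; it is stated as a cited result from \cite{Bco} and used as a black box in the proof of Corollary~\ref{onetwo}. Your proposal ultimately arrives at the same position, so in that sense it matches the paper exactly.

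One caution about your forward-direction sketch: you suggest verifying that $W_5$, $BW_3$, $W_7$ are not circle graphs by appealing to ``the characterization in Theorem~\ref{main}.'' Be careful here, because the implication $3\Rightarrow 1$ of Theorem~\ref{main} is proved in this paper \emph{via} Theorem~\ref{Bthm} (through Corollary~\ref{onetwo}), so invoking the full equivalence would be circular. What is legitimate is to use only the implication $1\Rightarrow 3$ (Corollary~\ref{bigoneg}, which is proved independently of Bouchet's theorem) together with Proposition~\ref{obstruct} and Lemma~\ref{obvious}: if any of the three obstructions were a circle graph, its $\mathcal{Z}_3$ would be $\mathbb{R}$-representable, hence its $\mathcal{Z}_2$ would be as well, contradicting Proposition~\ref{obstruct}. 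Your alternative of direct enumeration over small 4-regular graphs is also non-circular and is in fact how Bouchet originally handled that half.
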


Bouchet's theorem is useful in proving Theorem~\ref{main} because of the following result. This result is closely related to a statement given (without proof) in \cite[page 36]{GeelenPhD} in the context of delta-matroids.

\begin{proposition}
\label{obstruct}
If $\mathbb{F}$ is a field with $\textrm{char}(\mathbb{F}) \neq 2$ and $G \in \{W_{5},BW_{3},W_{7}\}$, then $\mathcal{Z}_{2}(G)$ is not representable over $\mathbb{F}$.
\end{proposition}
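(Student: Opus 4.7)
The plan is to suppose that $\mathcal{Z}_{2}(G)$ is $\mathbb{F}$-representable for some field $\mathbb{F}$ with $\mathrm{char}(\mathbb{F})\neq 2$, and in each of the three cases derive a polynomial identity that forces $2=0$ in $\mathbb{F}$.

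The first step is to translate the sheltering hypothesis into a clean principal-minor condition. Lemma~\ref{standardtwo} provides a matrix $A$ over $\mathbb{F}$ whose nonzero entries are at precisely the positions of the nonzero entries of $A(G)$, such that $\begin{pmatrix}I & A\end{pmatrix}$ shelters $\mathcal{Z}_{2}(G)$. For any $S\subseteq V(G)$, subtracting the $\phi$-columns indexed by $V(G)\setminus S$ from the $\chi$-columns indexed by $S$ shows that the subtransversal $\{\phi_G(v):v\notin S\}\cup\{\chi_G(v):v\in S\}$ has rank $|V(G)\setminus S|+\mathrm{rank}_{\mathbb{F}}(A[S,S])$ in the sheltering matroid, while the analogous calculation in $M[IAS(G)]$ yields $|V(G)\setminus S|+\mathrm{rank}_{GF(2)}(A(G)[S,S])$. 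Equating these gives
\[
\mathrm{rank}_{\mathbb{F}}(A[S,S]) \;=\; \mathrm{rank}_{GF(2)}(A(G)[S,S]) \qquad\text{for every } S\subseteq V(G),
\]
so the pattern of singular versus nonsingular principal submatrices of $A$ must coincide with that of $A(G)$.

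The second step is the case analysis for $G\in\{W_5, BW_3, W_7\}$. Rescaling the $\phi$- and $\chi$-columns of $\begin{pmatrix}I & A\end{pmatrix}$ preserves both the nonzero pattern of $A$ and the sheltering property, so we may normalize the entries of $A$ corresponding to edges of a chosen spanning tree of $G$ to $1$, leaving only a handful of free parameters $x_e\in\mathbb{F}^{\times}$ indexed by the remaining edges. Each $S$ with $\det_{GF(2)}(A(G)[S,S])=0$ imposes the polynomial equation $\det_{\mathbb{F}}(A[S,S])=0$ in the $x_e$, while each $S$ with $\det_{GF(2)}(A(G)[S,S])=1$ imposes a nonvanishing condition. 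The target is to exhibit, for each of the three obstructions, a short list of subsets $S$ whose combined polynomial constraints collapse to $2=0$ in $\mathbb{F}$.

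The hard part is the bookkeeping for $W_7$, the largest of the three obstructions, where principal submatrices can be $8\times 8$ and the free parameter space is correspondingly larger. Exploiting the dihedral symmetries of the wheels $W_5$ and $W_7$ to collapse many parameter choices should keep the determinant expansions manageable, and for $BW_3$ a more direct but tailored computation should suffice. In each case the contradictory equation $2=0$ is expected to emerge from comparing two subsets $S$ whose principal minors share several cancelling terms, so that the difference of the two determinants is a single monomial multiplied by $2$.
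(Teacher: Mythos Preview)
Your first step correctly extracts the essential constraint: $\mathrm{rank}_{\mathbb{F}}A[S,S]=\mathrm{rank}_{GF(2)}A(G)[S,S]$ for every $S\subseteq V(G)$. With that in hand the plan is sound and would succeed, though you have not actually carried out the determinant computations for any of the three graphs. One small slip: each non-tree edge $vw$ contributes two free entries $A_{vw}$ and $A_{wv}$, not a single parameter $x_e$, so your parameter count is off by roughly a factor of two; and you will in practice need the full rank equalities from step~1, not merely the determinant-vanishing consequences you emphasise in step~2 (for instance, in $W_5$ one needs $\mathrm{rank}\,A[\{1,2,3,6\}]=2$, which forces $3\times 3$ minors of that block to vanish, not just its $4\times 4$ determinant).

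The paper organises the case analysis differently. For $BW_3$ it avoids computation altogether: there is a $\phi,\chi$-transversal $T$ whose restriction $M[IAS(BW_3)][T]$ is the Fano matroid $F_7$, and since $F_7$ is not representable over any field of characteristic $\neq 2$, no $\mathbb{F}$-representable matroid can shelter $\mathcal{Z}_2(BW_3)$. For $W_5$ (and, by an analogous argument, $W_7$) the paper normalises only columns, then uses rank conditions on proper subtransversals such as $\{\phi(4),\phi(5),\chi(3),\chi(6)\}$---which correspond to ranks of \emph{non-principal} submatrices $A[V\setminus P,Q]$---to force five parameter coincidences; a single $6\times 6$ determinant then equals $2bdfhj\neq 0$ in $\mathbb{F}$, contradicting the dependence of the corresponding transversal in $M[IAS(W_5)]$. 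Your principal-minor route can recover the same coincidences (the rank-$2$ condition on $A[\{1,2,3,6\}]$ yields the analogue of $b=b'$), so the two approaches converge; but the paper's use of small proper subtransversals keeps each step to a $2\times 2$ calculation, and the Fano shortcut for $BW_3$ is considerably cleaner than any direct determinant chase.
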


\begin{proof}
There is a transversal $T$ of $W(BW_3)$ which includes only $\phi$ and $\chi$ elements, such that the restriction of $M[IAS(BW_3)]$ to $T$ is isomorphic to the Fano matroid $F_{7}$. The Fano matroid is not representable over $\mathbb{F}$, so no $\mathbb{F}$-representable matroid can shelter $\mathcal{Z}_{2}(BW_{3})$.

The proposition is a bit harder to verify for $W_{5}$. To
establish notation, we set $V(W_{5})=\{1,2,3,4,5,6\}$ and $E(W_{5})=\{1i,(i-1)i\mid 2 \leq i \leq 6\}\cup\{26\}$. Suppose $A$ satisfies Lemma \ref{standardtwo} with $G=W_{5}$. If we multiply a column
of $A$ by a nonzero element of $\mathbb{F}$, then we do not change the matroid represented by
$\begin{pmatrix}
I & A\\
\end{pmatrix}$.
Consequently we may presume that in each column of $A$, the first nonzero entry is $1$.  Property 1 of Lemma \ref{standardtwo} now tells us that
\[
A=
\begin{pmatrix}
0 & 1 & 1 & 1 & 1 & 1\\
a & 0 & b & 0 & 0 & b^{\prime}\\
c & d & 0 & d^{\prime} & 0 & 0\\
e & 0 & f & 0 & f^{\prime} & 0\\
g & 0 & 0 & h & 0 & h^{\prime}\\
i & j & 0 & 0 & j^{\prime} & 0
\end{pmatrix}
\text{,}
\]
where the entries represented by letters are nonzero elements of $\mathbb{F}$.

The submatrix of $IAS(W_{5})$ corresponding to the subtransversal $\{\phi_{W_{5}}(4), \phi_{W_{5}}(5), \linebreak \chi_{W_{5}}(3),\chi_{W_{5}}(6)\}$ is represented by the rank 3 matrix
\[
\begin{pmatrix}
0 & 0 & 1 & 1\\
0 & 0 & 1 & 1\\
0 & 0 & 0 & 0\\
1 & 0 & 1 & 0\\
0 & 1 & 0 & 1\\
0 & 0 & 0 & 0
\end{pmatrix}
\text{.}
\]
Property 2 of Lemma~\ref{standardtwo} tells us that the corresponding submatrix of
$\begin{pmatrix}
I & A \\
\end{pmatrix}
$,
\[
\begin{pmatrix}
0 & 0 & 1 & 1\\
0 & 0 & b & b^{\prime}\\
0 & 0 & 0 & 0\\
1 & 0 & f & 0\\
0 & 1 & 0 & h^{\prime}\\
0 & 0 & 0 & 0
\end{pmatrix}
\text{,}
\]
is also of rank $3$. We deduce that $b=b^{\prime}$. Similar arguments tell us
that $d=d^{\prime}$, $f=f^{\prime}$, $h=h^{\prime}$ and $j=j^{\prime}$.

\begin{figure}
[ptb]
\begin{center}
\includegraphics[scale=0.8]{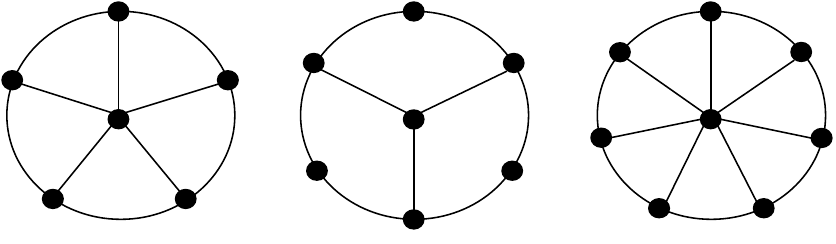}
\caption{$W_{5}$, $BW_{3}$ and $W_{7}$.}
\label{circmf4}\end{center}
\end{figure}

Now, consider the transversal $T=\{\phi_{W_{5}}(1),\chi_{W_{5}}(2),\chi_{W_{5}}(3),\chi_{W_{5}}(4),\chi_{W_{5}}(5),$ $\chi_{W_{5}}(6)\}$ of $W(W_{5})$. The corresponding submatrix of $\begin{pmatrix}
I & A \\
\end{pmatrix}
$ is
\[
\begin{pmatrix}
1 & 1 & 1 & 1 & 1 & 1\\
0 & 0 & b & 0 & 0 & b\\
0 & d & 0 & d & 0 & 0\\
0 & 0 & f & 0 & f & 0\\
0 & 0 & 0 & h & 0 & h\\
0 & j & 0 & 0 & j & 0
\end{pmatrix}
\text{.}
\]
A direct calculation shows that the determinant of this matrix is $2bdfhj$, which is nonzero in $\mathbb{F}$ but $0$ in $GF(2)$. It follows that the transversal $T$ is independent in
$M[\begin{pmatrix}
I & A \\
\end{pmatrix}
]$
 and dependent in $M[IAS(G)]$; this contradicts property 2 of Lemma~\ref{standardtwo}.

The proposition may be verified for $W_{7}$ by a closely analogous argument.
\end{proof}

We deduce the contrapositive of the implication $3 \Rightarrow 1$ of Theorem~\ref{main}.

\begin{corollary}
\label{onetwo}
If $\mathcal{Z}_{3}(G)$ is representable over some field $\mathbb{F}$ with $\textrm{char}(\mathbb{F}) \neq 2$, then $G$ is a circle graph.
\end{corollary}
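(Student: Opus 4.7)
The plan is to argue by contrapositive, chaining together the lemmas and proposition that have already been established in the excerpt. Suppose $G$ is a simple graph that is \emph{not} a circle graph. By Bouchet's obstructions theorem (Theorem~\ref{Bthm}), $G$ has some $H \in \{W_5, BW_3, W_7\}$ as a vertex-minor.

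Now assume, for contradiction, that $\mathcal{Z}_3(G)$ is representable over a field $\mathbb{F}$ with $\mathrm{char}(\mathbb{F}) \neq 2$. By Lemma~\ref{vminor}, representability of $\mathcal{Z}_3$ is preserved under taking vertex-minors, so $\mathcal{Z}_3(H)$ is also $\mathbb{F}$-representable. Then Lemma~\ref{obvious} guarantees that $\mathcal{Z}_2(H)$ is $\mathbb{F}$-representable as well, since passing from $\mathcal{Z}_3$ to $\mathcal{Z}_2$ amounts to restricting a sheltering matroid to the $\phi$ and $\chi$ elements. But this directly contradicts Proposition~\ref{obstruct}, which asserts that none of $\mathcal{Z}_2(W_5), \mathcal{Z}_2(BW_3), \mathcal{Z}_2(W_7)$ is $\mathbb{F}$-representable whenever $\mathrm{char}(\mathbb{F}) \neq 2$.

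There is essentially no hard step remaining in the corollary itself: all the real work has been done in Proposition~\ref{obstruct} (where the characteristic $\neq 2$ hypothesis was used via the determinant computation $2bdfhj \neq 0$) and in the chain of preservation lemmas. The proof is therefore just a short assembly: apply Theorem~\ref{Bthm} to locate an obstruction as a vertex-minor, push the hypothetical representation down to that obstruction via Lemmas~\ref{vminor} and~\ref{obvious}, and invoke Proposition~\ref{obstruct} to derive a contradiction.
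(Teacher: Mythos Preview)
Your proof is correct and follows essentially the same route as the paper: invoke Bouchet's obstructions theorem (Theorem~\ref{Bthm}), push $\mathbb{F}$-representability down to the obstruction via Lemmas~\ref{vminor} and~\ref{obvious}, and contradict Proposition~\ref{obstruct}. The only cosmetic difference is that the paper argues directly (from representability to ``no forbidden vertex-minor'' to ``circle graph'') whereas you phrase it as a contrapositive, but the logical content is identical.
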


\begin{proof}
Suppose $\mathcal{Z}_{3}(G)$ is representable over a field $\mathbb{F}$ with $\textrm{char}(\mathbb{F}) \neq 2$. Lemmas~\ref{obvious} and~\ref{vminor} tell us that for every vertex-minor $H$ of $G$, $\mathcal{Z}_{2}(H)$ is also representable over $\mathbb{F}$. According to Proposition~\ref{obstruct}, it follows that no vertex-minor of $G$ is isomorphic to $W_5,BW_3$ or $W_7$. Theorem~\ref{Bthm} now tells us that $G$ is a circle graph.
\end{proof}

Before proceeding we take a moment to observe that in general, Lemma~\ref{vminor} and Corollary~\ref{onetwo} do not hold for~$\mathcal{Z}_{2}(G)$. Let $G$ be a bipartite graph which is a fundamental graph for the cycle or cocycle matroid of some nonplanar graph. Then $M[IA(G)]$ is isomorphic to the direct sum $M \oplus M^*$, where $M$ is the cycle matroid of the nonplanar graph. It follows that $M \oplus M^*$ is a strict sheltering matroid for~$\mathcal{Z}_{2}(G)$. It is well known that graphic and cographic matroids are representable over all fields~\cite[Lemma 5.1.3 and Corollary 5.1.6]{O}, so~$M \oplus M^*$ is representable over all fields. Hence $\mathcal{Z}_{2}(G)$ is strictly representable over all fields. However a theorem of deFraysseix~\cite{F} asserts that $G$ cannot be a circle graph.

Two such examples were discussed in \cite[Section 8]{BT2}. They are pictured in Figure~\ref{fourfig4}; $G_1$ is a fundamental graph for $M(K_{5})$ and $G_2$ is a fundamental graph for $M(K_{3,3})$. It is not hard to directly confirm deFraysseix's assertion that neither $G_1$ nor $G_2$ is a circle graph~\cite{F}; $G_1$ has $BW_3$ as a vertex-minor, and $G_2$ has $W_5$ as a vertex-minor.

\begin{figure}[ptb]\centering
\includegraphics[scale=0.8]{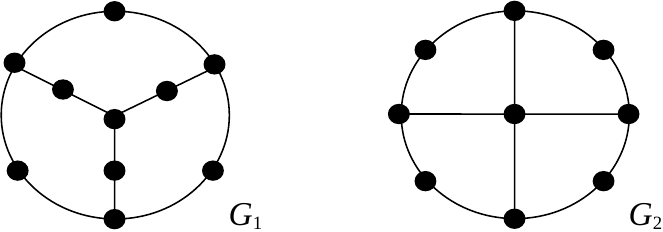}
\caption{Fundamental graphs of $M(K_{5})$ and $M(K_{3,3})$.}
\label{fourfig4}\end{figure}

\section{Circuit partitions in 4-regular graphs}
\label{sec:circpart}
In this section we summarize some ideas and results from~\cite{Tsign}; we refer to that paper for a more detailed discussion.

If $F$ is a 4-regular graph, then, at each vertex $v \in V(F)$, there are three different~\emph{transitions}, i.e., partitions of the four incident half-edges into two pairs. We use $\mathfrak{T}(F)$ to denote the set of transitions in $F$, and we refer to one pair of half-edges incident at $v$ as a~\emph{single transition}. If $C$ is an Euler system of $F$, then $C$ can be used to label the elements of~$\mathfrak{T}(F)$ in the following way. First, orient each circuit of $C$. Then the transition at $v$ that pairs together half-edges which appear consecutively on the incident circuit of $C$ is denoted $\phi_C(v)$; the other transition that is consistent with the edge directions defined by the incident circuit of $C$ is denoted $\chi_C(v)$; and the third transition, which is inconsistent with the edge directions defined by the incident circuit of $C$, is denoted $\psi_C(v)$. These transition labels are not changed if the orientations of some circuits of $C$ are reversed.

The reappearance of the $\phi,\chi,\psi$ symbols used to label elements of isotropic matroids is no coincidence. If $C$ is an Euler system of $F$, then there is a bijection $W(\mathcal{I}(C)) \leftrightarrow \mathfrak{T}(F)$ given by $\phi_{\mathcal{I}(C)}(v) \leftrightarrow \phi_C(v)$, $\chi_{\mathcal{I}(C)}(v) \leftrightarrow \chi_C(v)$ and $\psi_{\mathcal{I}(C)}(v) \leftrightarrow \psi_C(v)$ for all $v \in V(F)$. This bijection relates each transversal $T \in \mathcal{T}(\mathcal{I}(C))$ to a partition of $E(F)$ into edge-disjoint circuits, and it turns out that the 3-matroid $\mathcal{Z}_3(\mathcal{I}(C))$ is determined by these partitions. Before giving details we should emphasize that according to the definition given in the introduction, for us a ``circuit'' is a closed trail. In particular a circuit in a 4-regular graph must not visit any half-edge more than once, but it may visit a vertex twice.

\begin{definition}
\label{circpartition}
Let $F$ be a 4-regular graph. A~\emph{circuit partition} of $F$ is a partition of $E(F)$ into edge-disjoint circuits.
\end{definition}

A circuit partition $P$ is determined by choosing one transition $P(v)$ at each vertex $v$ of $F$. There are three transitions at each vertex, so there are $3^{|V(F)|}$ circuit partitions.

\begin{definition}
\label{touch}
Let $P$ be a circuit partition in a 4-regular graph $F$. Then the~\emph{touch-graph} $Tch(P)$ is the graph with a vertex $v_{\gamma}$ for each $\gamma \in P$ and an edge $e_v$ for each $v \in V(F)$, such that $e_v$ is incident on $v_{\gamma}$ in $Tch(P)$ if and only if $\gamma$ is incident on $v$ in $F$.
\end{definition}

Examples of Definition~\ref{touch} appear in Figure~\ref{fourfig3a1}. On the left we see two circuit partitions $P$ and $P'$ in a 4-regular graph $F$. To follow a circuit of $P$ or $P'$ walk along the edges of $F$, making sure to maintain the line status (plain, heavy, or dashed) when traversing a vertex. (The line status may change in the middle of an edge.) On the right we see $Tch(P)$ and $Tch(P')$.

\begin{figure}[tb]
\centering
\includegraphics[scale=0.8]{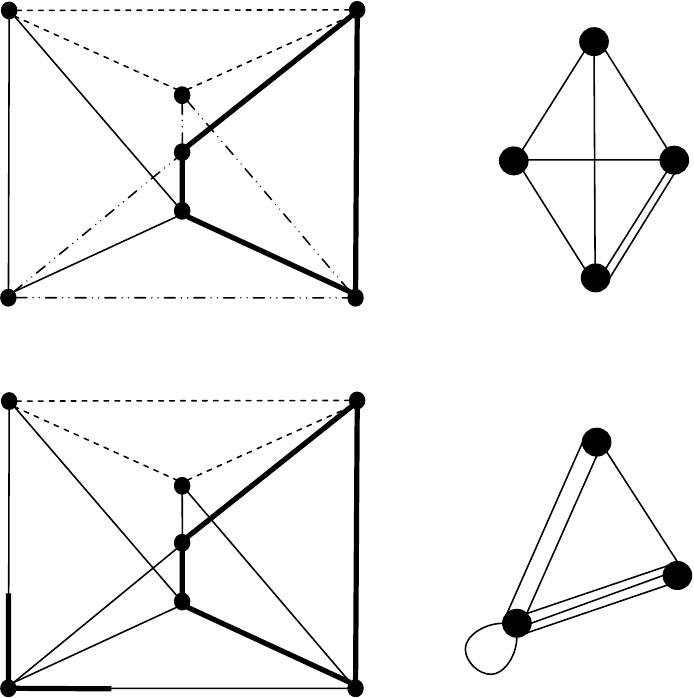}
\caption{Two circuit partitions and their touch-graphs.}
\label{fourfig3a1}
\end{figure}

If $P$ is a circuit partition in $F$, then every half-edge $h$ in $F$ has a ``shadow'' half-edge $\pi_P(h)$ in $Tch(P)$, defined in the following way. If $h$ is incident on a vertex $v$, $\gamma$ is the circuit of $P$ that includes $h$, and $\{h,h'\}$ is the single transition of $P$ that includes $h$, then $\pi_P(h)=\pi_P(h')$ is a half-edge of $Tch(P)$ that is contained in $e_v$ and incident on $v_{\gamma}$. For simplicity we use the notation $\pi_P(h)=\overline{h}$ when $P$ is clear from the context.

Also, every walk $W=v_1,e_1,v_2,\ldots,e_{k-1},v_k$ in $F$ has a ``shadow'' $\pi_P(W)=\overline{W}$, which is a walk in $Tch(P)$. If $\gamma_1$ is the circuit of $P$ that includes $e_1$, then $v_{\gamma_1}$ is the first vertex of $\overline{W}$. As we follow $W$ in $F$, each time we pass through a vertex $v_i$ we traverse two half-edges incident on $v_i$; say $h$ before $v_i$, and $h'$ after $v_i$. If the transition determined by $\{h,h'\}$ is not a transition of $P$, then the edge $\{\overline{h},\overline{h'}\}$ of $Tch(P)$ is added to $\overline{W}$. If the transition determined by $\{h,h'\}$ is a transition of $P$, then no edge is added to $\overline{W}$. (In this instance we are walking along a circuit of $P$ as we pass through $v$ on $W$, so the ``shadow'' is standing still on a vertex of $Tch(P)$.)

If $W$ is a closed walk in $F$ then $\overline{W}$ is a closed walk in $Tch(P)$. Of course $\overline{W}$ may be much shorter than $W$; for instance if $W \in P$ then $\overline{W}$ is just $v_W$.

Let $D$ be a directed version of $Tch(P)$. Let $\mathbb{F}$ be a field, and for each directed walk $W$ in $Tch(P)$ let $z_D(W)$ be the vector in $\mathbb{F}^{E(Tch(P))}$ obtained by tallying $+1$ in the $e$ coordinate each time $W$ traverses $e$ in accordance with the $D$ direction, and $-1$ in the $e$ coordinate each time $W$ traverses $e$ against the $D$ direction. Then the subspace of $\mathbb{F}^{E(Tch(P))}$ spanned by $\{z_D(W) \mid W \text{ is a closed walk in }Tch(P) \}$ is the $\emph{cycle space}$ of $Tch(P)$ over $\mathbb{F}$, denoted $Z_D(Tch(P))$. We refer to standard texts in graph theory, like~\cite{B}, for detailed discussions of cycle spaces.

\begin{definition}
\label{funcirc}
Let $F$ be a 4-regular graph with an Euler system $C$. For each $v\in V(F)$, there are two circuits obtained by following a circuit of $C$ from $v$ to $v$. These are the~\emph{induced circuits} of $C$ at $v$.
\end{definition}

\begin{theorem}(\cite[Section 2]{Tsign})
\label{space}
Let $C$ be an Euler system of a 4-regular graph $F$, and let $\Gamma$ be a set that includes one induced circuit of $C$ at each $v \in V(F)$, along with an orientation for that circuit. Then for every circuit partition $P$ of $F$ and every directed version $D$ of $Tch(P)$, $\{z_D(\overline{\gamma}) \mid \gamma \in \Gamma \}$ spans $Z_D(Tch(P))$ over both $GF(2)$ and $\mathbb{R}$.
\end{theorem}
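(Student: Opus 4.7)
The plan is to exhibit the shadow operation as an $\mathbb{F}$-linear map from the cycle space of $F$ to $Z_D(Tch(P))$, identify its kernel and image, and then use the Euler system structure of $C$ to reduce the spanning question to the shadows of the fundamental circuits in $\Gamma$.

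First I would establish the dimension of $Z_D(Tch(P))$. The graph $Tch(P)$ has $|V(F)|$ edges (indexed by $V(F)$), $|P|$ vertices (indexed by circuits of $P$), and the same number of connected components as $F$: the edge $e_v$ joins two circuits of $P$ in $Tch(P)$ exactly when they share $v$ in $F$, and these shared-vertex incidences glue into the components of $F$. Consequently $\dim Z_D(Tch(P)) = |V(F)| - |P| + c(F)$.

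Next I would show that $W \mapsto z_D(\overline{W})$ extends to an $\mathbb{F}$-linear map $\sigma \colon Z_{D_F}(F) \to Z_D(Tch(P))$ for an auxiliary direction $D_F$ on $F$, and analyze its kernel and image. Its image is all of $Z_D(Tch(P))$ by a lifting argument: any directed closed walk $K$ in $Tch(P)$ traversing $e_{v_1},\ldots,e_{v_k}$ between rests at $v_{\gamma_0},\ldots,v_{\gamma_k}=v_{\gamma_0}$ lifts to a closed walk $\widetilde K$ in $F$ which follows a portion of $\gamma_i$ from $v_i$ to $v_{i+1}$ using $P$-transitions and switches circuits at each $v_i$ by taking a non-$P$ single transition, yielding $\overline{\widetilde K}=K$ and hence $z_D(K)=\sigma(z_{D_F}(\widetilde K))$. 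Its kernel contains the $|P|$-dimensional span of the circuits of $P$, since any closed walk following only $P$-transitions has empty shadow; rank-nullity forces $\ker\sigma=\mathrm{span}(P)$ exactly.

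To finish I would verify that $\Gamma\cup P$ spans $Z_{D_F}(F)$; applying $\sigma$ (which kills $P$) then gives $\{z_D(\overline{\gamma}) : \gamma\in\Gamma\}$ spanning $Z_D(Tch(P))$. Using the identity $\gamma_v + \gamma_v' = \pm C_i$ at each vertex $v$ of component $i$ (where $\gamma_v,\gamma_v'$ are the two fundamental circuits of $C$ at $v$ and $C_i$ is the Euler circuit of that component), a dimension count shows $\Gamma\cup\{C_i:i\}$ is a basis of $Z_{D_F}(F)$. Each $C_i$ in turn lies in the span of the circuits of $P$ in component $i$: over $GF(2)$ it is simply their sum (each edge is traversed exactly once), and over $\mathbb{R}$ a suitable reorientation of each $P$-circuit to align with $C_i$ on its edges produces a signed combination equal to $C_i$. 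This last reduction is closely tied to the signed interlacement machinery of~\cite{Tsign} relating the transition data of $C$ and $P$.

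The main obstacle is the orientation bookkeeping over $\mathbb{R}$: proving that $\sigma$ is well-defined on cycle space (two closed walks representing the same cycle yield the same shadow) and that each $C_i$ admits a consistent sign assignment making it a signed sum of $P$-circuits. Over $GF(2)$ both issues evaporate. The linearity of $\sigma$ on cycle space can be reduced to checking invariance under elementary moves on closed walks (cancelling backtracks and splitting at common vertices); the $C_i \in \mathrm{span}(P_i)$ step over $\mathbb{R}$ requires working edge-by-edge to choose the sign of each $P$-circuit so that the accumulated contributions agree with $C_i$'s direction, and this is where the bulk of the real work lies.
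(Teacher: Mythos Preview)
Your framework is sound up through the reduction: the shadow operation does induce a well-defined $\mathbb{F}$-linear map $\sigma\colon Z_{D_F}(F)\to Z_D(Tch(P))$ (the $e_w$-coordinate of $z_D(\overline W)$ equals the signed count of half-edges of $W$ entering $w$ on one side of the $P(w)$-split minus the other, which is a linear function of $z_{D_F}(W)$), the lifting argument gives surjectivity, and rank--nullity together with the edge-disjointness of $P$ forces $\ker\sigma=\mathrm{span}(P)$. So the theorem is indeed equivalent to the assertion that $\Gamma\cup P$ spans $Z_{D_F}(F)$. Over $GF(2)$ your argument goes through.

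The gap is the step over $\mathbb{R}$ where you assert that each Euler circuit $C_i$ lies in $\mathrm{span}(P)$. This is false whenever $P(v)=\psi_C(v)$ for some $v$: the two half-edges of a single $P$-transition at such a $v$ are then both incoming (or both outgoing) with respect to $C_i$'s orientation, so the $P$-circuit through them cannot be consistently oriented to match $C_i$, and no choice of signs makes $\sum_j\pm z_{D_F}(\gamma_j)$ equal $z_{D_F}(C_i)$. Equivalently, one computes directly that $z_D(\overline{C_i})$ has $e_v$-coordinate $\pm 2$ at every $v$ with $P(v)=\psi_C(v)$, so $C_i\notin\ker\sigma=\mathrm{span}(P)$ over $\mathbb{R}$. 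The paper's own example at the end of Section~\ref{sec:circpart} exhibits exactly this: for the doubled triangle with $P$ given by $\chi_C(a),\chi_C(b),\psi_C(c)$, the single $P$-circuit has cycle vector $(1,-1,1,-1,1,-1)$ while $C$ has $(1,1,1,1,1,1)$, and these are not proportional. Your reformulation is therefore correct but your proposed proof of the reformulated statement fails over $\mathbb{R}$; the real content of the theorem has not been discharged. (The paper itself does not prove this result---it is quoted from~\cite{Tsign}---so there is no internal argument to compare against.)
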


We think of Theorem~\ref{space} as a surprising result because the touch-graphs of circuit partitions in $F$ are quite varied. There are touch-graphs in which all edges are loops (the touch-graphs of the smallest circuit partitions, the Euler systems), touch-graphs in which no edges are loops (the touch-graphs of the maximal circuit partitions), and many other touch-graphs between these extremes. Despite this variation, Theorem~\ref{space} allows us to describe spanning sets in the cycle spaces of all touch-graphs in a consistent way.

When citing Theorem~\ref{space}, we use the notation $\Gamma=\{ C_{\Gamma}(v) \mid v \in V(F) \}$.
Let $M_{\mathbb{R},\Gamma}(C,P,D)$ be the $V(F) \times V(F)$ matrix whose $v$ row vector is obtained from $z_D(\overline{C_{\Gamma}(v)})$ by relabelling according to the bijection $e_w \leftrightarrow w$. Then the cycle space $Z_D(Tch(P))$ over $\mathbb{R}$ corresponds to the row space of $M_{\mathbb{R},\Gamma}(C,P,D)$. Over $GF(2)$, $Z_D(Tch(P))$ corresponds to the row space of the matrix $M(C,P)$ obtained from $M_{\mathbb{R},\Gamma}(C,P,D)$ by reducing all entries modulo 2.
We can use the simple $M(C,P)$ notation when we work over $GF(2)$ because the value of $M_{\mathbb{R},\Gamma}(C,P,D)_{vw}$ modulo 2 is independent of both $D$ and $\Gamma$.

Theorem~\ref{space} implies that the matroid defined by $M_{\mathbb{R},\Gamma}(C,P,D)$ over $\mathbb{R}$ is the same as the matroid defined by $M(C,P)$ over $GF(2)$:

\begin{corollary} \label{cor:matroid_GF2_R}
Let $S$ be a subset of $V(F)$. Let $M_{\mathbb{R}}(S)$ be the submatrix of $M_{\mathbb{R},\Gamma}(C,P,D)$ consisting of columns corresponding to elements of $S$, and let $M(S)$ be the matrix obtained from $M_{\mathbb{R}}(S)$ by reducing its entries modulo $2$. Then the rank of $M(S)$ over $GF(2)$ is the same as the rank of $M_{\mathbb{R}}(S)$ over $\mathbb{R}$.
\end{corollary}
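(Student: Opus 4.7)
The plan is to express both ranks as the same field-independent combinatorial invariant of $Tch(P)$. By Theorem~\ref{space}, the rows of $M_{\mathbb{R},\Gamma}(C,P,D)$ span $Z_D(Tch(P))$ over $\mathbb{R}$, and the rows of $M(C,P)$ (obtained by reducing $M_{\mathbb{R},\Gamma}(C,P,D)$ modulo~$2$) span $Z_D(Tch(P))$ over $GF(2)$. Writing $E_S = \{e_v \mid v \in S\}$ for the corresponding edge set of $Tch(P)$, and $\varrho_S$ for the coordinate projection onto $E_S$, the rank of $M_{\mathbb{R}}(S)$ equals $\dim_{\mathbb{R}} \varrho_S(Z_D(Tch(P)))$, and the rank of $M(S)$ equals $\dim_{GF(2)} \varrho_S(Z_D(Tch(P)))$, since $M(S)$ is also the submatrix of $M(C,P)$ consisting of the $S$-columns.

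Applying rank--nullity to $\varrho_S$ restricted to the cycle space, each of these dimensions equals
\[
\dim Z_D(Tch(P)) - \dim\bigl(Z_D(Tch(P)) \cap \ker \varrho_S\bigr).
\]
A standard fact from algebraic graph theory identifies $Z_D(Tch(P)) \cap \ker \varrho_S$ with the cycle space $Z_{D'}(Tch(P) - E_S)$ of the subgraph obtained by deleting the edges in $E_S$ (endowed with the induced orientation $D'$). Since the cycle space of any graph $H$ has dimension equal to the cyclomatic number $|E(H)| - |V(H)| + c(H)$ over every field, where $c(H)$ is the number of connected components of $H$, both differences evaluate to the same integer $|S| + c(Tch(P)) - c(Tch(P) - E_S)$, establishing the corollary.

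The only step beyond Theorem~\ref{space} is the identification of $Z_D(Tch(P)) \cap \ker \varrho_S$ with the cycle space of the edge-deleted subgraph, and this is routine: an $\mathbb{F}$-valued flow on $Tch(P)$ that vanishes on $E_S$ is precisely an $\mathbb{F}$-valued flow on $Tch(P) - E_S$. Consequently I do not expect any obstacle of substance; the entire content of the corollary is the translation of Theorem~\ref{space} through the field-independent dimension formula for cycle spaces.
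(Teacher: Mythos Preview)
Your argument is correct, and it is genuinely different from the paper's. The paper argues by contradiction: it takes a minimal $S$ for which the two ranks disagree, shows the $GF(2)$-rank must be the smaller one, interprets the resulting $GF(2)$-dependence among the columns as a cocycle $\{e_s \mid s\in S\}$ of $Tch(P)$, and then lifts that cocycle to a $\pm 1$-valued cocycle over $\mathbb{R}$, contradicting independence of the columns over $\mathbb{R}$. Your route is more structural: you compute each rank directly as $\dim Z_D(Tch(P)) - \dim Z_{D'}(Tch(P)-E_S)$ and observe that both terms are cyclomatic numbers, hence field-independent. Both arguments use Theorem~\ref{space} in an essential way, but the paper exploits the orthogonality between cycles and cocycles, whereas you exploit only the cycle space and the behaviour of cycle spaces under edge deletion. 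Your approach is shorter and yields the explicit formula $|S|+c(Tch(P))-c(Tch(P)-E_S)$ for the common rank; the paper's approach, on the other hand, makes the cocycle structure visible, which foreshadows the signed-cocycle machinery used later in Section~\ref{sec:based}.
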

\begin{proof}
Suppose that $S$ is minimal among subsets of $V(F)$ for which the rank of $M(S)$ over $GF(2)$ is not the same as the rank of $M_{\mathbb{R}}(S)$ over $\mathbb{R}$. If the rank of $M(S)$ over $GF(2)$ is strictly larger than the rank of $M_{\mathbb{R}}(S)$ over $\mathbb{R}$, then the columns of $M(S)$ are linearly independent over $GF(2)$ but the columns of $M_{\mathbb{R}}(S)$ are linearly dependent over $\mathbb{R}$, and hence also over $\mathbb{Q}$. That is, there are rational numbers $r_s$, $s \in S$, not all $0$, such that if we multiply the $s$ column of $M_{\mathbb{R}}(S)$ by $r_s$ for each $s \in S$, then the sum of the resulting column vectors is $0$. Eliminating common factors and multiplying by denominators, we may presume that the numbers $r_s$ are relatively prime integers. Then not all of the $r_s$ are divisible by $2$, so they define a linear dependence of the columns of $M(S)$ over $GF(2)$, contradicting the hypothesis that the columns of $M(S)$ are independent over $GF(2)$. We conclude that the rank of $M(S)$ over $GF(2)$ is strictly smaller than the rank of $M_{\mathbb{R}}(S)$ over $\mathbb{R}$.

Thus the minimality of $S$ guarantees that the columns of $M_{\mathbb{R}}(S)$ are linearly independent over $\mathbb{R}$, but linearly dependent when their entries are reduced modulo $2$, and no proper subset of the columns of $M(S)$ is dependent. Let $\kappa(S) \in GF(2)^{V(F)}$ be the vector whose $v$ entry is $1$ if and only if $v \in S$. Then $\kappa(S)$ is an element of the orthogonal complement of the row space of $M(C,P)$ over $GF(2)$, so Theorem~\ref{space} tells us that $\{e_s \mid s\in S\}$ is a cocycle of $Tch(P)$. That is, there is a proper subset $P_0$ of $P$ such that $S$ is the set of vertices of $F$ incident on both a circuit from $P_0$ and a circuit from $P \setminus P_0$. Let $\kappa_{\mathbb{R}}(S)$ be the vector in $\mathbb{Z}^{V(F)}$ whose $v$ coordinate is nonzero if and only if $v \in S$, with the $v$ coordinate equal to $1$ if $e_v$ is directed in $D$ from a circuit in $P_0$ to a circuit in $P \setminus P_0$, and the $v$ coordinate equal to $-1$ if $e_v$ is directed in $D$ from a circuit in $P \setminus P_0$ to a circuit in $P_0$. Then $\kappa_{\mathbb{R}}(S)$ is a cocycle of $D$, so $\kappa_{\mathbb{R}}(S)$ is an element of the orthogonal complement of $Z_{D}(Tch(P))$ over $\mathbb{R}$. But according to Theorem~\ref{space}, this contradicts the hypothesis that the columns of $M_{\mathbb{R}}(S)$ are linearly independent over $\mathbb{R}$.
\end{proof}

It is not difficult to describe the matrix $M_{\mathbb{R},\Gamma}(C,P,D)$ explicitly. Given a vertex $v$, the $v$ row of $M_{\mathbb{R},\Gamma}(C,P,D)$ is obtained by following $C_{\Gamma}(v)$ in $F$, and tallying contributions to the ``shadow'' of $C_{\Gamma}(v)$ in $Tch(P)$. At a vertex $w \neq v$, the $\phi_C(w)$ transition of $F$ appears in $C_{\Gamma}(v)$, so the contribution of a passage of $C_{\Gamma}(v)$ through $w$ is $0$ if $P(w)=\phi_C(w)$, and $\pm 1$ if $P(w) \neq \phi_C(w)$. Considering that $C_{\Gamma}(v)$ may pass through $w$ twice, we see that $M_{\mathbb{R},\Gamma}(C,P,D)_{vw} \in \{-2,-1,0,1,2 \}$. At $v$ itself, the $\chi_C(v)$ transition of $F$ appears in $C_{\Gamma}(v)$, so the contribution of a passage of $C_{\Gamma}(v)$ through $v$ is $0$ if $P(v)=\chi_C(v)$, and $\pm 1$ if $P(v) \neq \chi_C(v)$. Considering that $C_{\Gamma}(v)$ passes through $v$ once, we see that $M_{\mathbb{R},\Gamma}(C,P,D)_{vv} \in \{-1,0,1 \}$.

Notice that if $\Gamma$ is changed by reversing the orientation of $C_{\Gamma}(v)$, then the $v$ row of $M_{\mathbb{R},\Gamma}(C,P,D)$ is multiplied by $-1$. Also, if $D$ is changed by reversing the direction of $e_w$, then the $w$ column of $M_{\mathbb{R},\Gamma}(C,P,D)$ is multiplied by $-1$.

The purpose of the subindex $\mathbb{R}$ in $M_{\mathbb{R},\Gamma}(C,P,D)$ is to remind us that in general, Theorem~\ref{space} fails over fields $\mathbb{F}$ with $\textrm{char}(\mathbb{F}) > 2$. As an example, consider the 4-regular graph $F$ with $V(F)=\{a,b,c\}$, which has two edges connecting each pair of vertices. We index the edges of $F$ as $e_1, \ldots, e_6$ in such a way that an Euler circuit $C$ is $a,e_1,b,e_2,c,e_3,a,e_4,b,e_5,c,e_6$. Let $P$ be the circuit partition given by the transitions $\chi_C(a),\chi_C(b)$, and $\psi_C(c)$. Then $P$ has only one element, the Euler circuit $a,e_1,b,e_5,c,e_2,b,$ $e_4,a,e_6,c,e_3$. Choose a directed version $D$ of $Tch(P)$ so that the initial half-edge of $e_a$ involves the single transition $\{e_1,e_3\}$, the initial half-edge of $e_b$ involves the single transition $\{e_1,e_5\}$ and the initial half-edge of $e_c$ involves the single transition $\{e_2,e_5\}$. If $\Gamma=\{e_1e_2e_3,e_5e_6e_1,e_3e_4e_5\}$ then
\[
\det M_{\mathbb{R},\Gamma}(C,P,D)=
\det \begin{pmatrix}
0 & 1 & 1 \\
-1 & 0 & 1 \\
1 & -1 & 1
\end{pmatrix}=3.
\]
The fact that the determinant of $M_{\mathbb{R},\Gamma}(C,P,D)$ is 0 (mod 3) indicates that the row space of $M_{\mathbb{R},\Gamma}(C,P,D)$ is a proper subspace of $Z_D(Tch(P))$ over $GF(3)$.

A weak version of Theorem~\ref{space} does hold over all fields:

\begin{proposition}
\label{arbfield}
Let $C$ be an Euler system of a 4-regular graph $F$, let $\Gamma$ be a set of oriented induced circuits of $C$, and let $\mathbb{F}$ be a field. Then for every circuit partition $P$ of $F$ and every directed version $D$ of $Tch(P)$, $\{z_D(\overline{\gamma}) \mid \gamma \in \Gamma \}$ is contained in the cycle space of $Tch(P)$ over $\mathbb{F}$.
\end{proposition}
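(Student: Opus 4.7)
The plan is to observe that Proposition~\ref{arbfield} is an immediate consequence of the definition of the cycle space, once we verify that each shadow $\overline{\gamma}$ is actually a closed walk in $Tch(P)$. Indeed, the cycle space $Z_D(Tch(P))$ over $\mathbb{F}$ was defined as the $\mathbb{F}$-linear span of $\{z_D(W) \mid W \text{ is a closed walk in } Tch(P)\}$, so any individual vector $z_D(W)$ arising from a closed walk $W$ lies in $Z_D(Tch(P))$ for every field $\mathbb{F}$. The issue over $GF(3)$ and larger-characteristic fields in Theorem~\ref{space} concerned \emph{spanning}, not \emph{membership}; for membership no characteristic hypothesis is needed.

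First I would recall that each $\gamma \in \Gamma$ is, by Definition~\ref{funcirc}, a fundamental circuit of $C$ at some $v \in V(F)$, i.e., a closed trail in $F$ that begins and ends at $v$. Next I would invoke the construction of $\overline{W} = \pi_P(W)$ recalled earlier in Section~\ref{sec:circpart}: the first vertex of $\overline{\gamma}$ is $v_{\gamma_1}$, where $\gamma_1$ is the element of $P$ containing the first edge of $\gamma$, and the construction then appends either an edge of $Tch(P)$ or nothing at all as $\gamma$ traverses each interior vertex of $F$. Since $\gamma$ is closed at $v$, the final passage at $v$ pairs the last edge of $\gamma$ with its first edge via some transition: if this transition lies in $P$ then the last edge of $\gamma$ and the first edge of $\gamma$ belong to the same circuit of $P$, so $\overline{\gamma}$ ends at $v_{\gamma_1}$; if not, then the edge $\{\overline{h},\overline{h'}\}$ appended at this step is incident on $v_{\gamma_1}$, again closing $\overline{\gamma}$. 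Either way $\overline{\gamma}$ is a closed walk in $Tch(P)$, as was already stated (without proof) in the paragraph preceding Definition~\ref{funcirc}.

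Finally I would conclude: since $\overline{\gamma}$ is a closed walk in $Tch(P)$, the tally vector $z_D(\overline{\gamma})$ lies in $Z_D(Tch(P))$ over $\mathbb{F}$ by definition of the cycle space. This holds for every $\gamma \in \Gamma$, proving the proposition. There is no real obstacle here; the content of the argument is purely that ``shadow of closed is closed,'' which is a topological statement independent of the field. The subtlety that \emph{does} fail in positive characteristic — namely whether these particular vectors \emph{span} the cycle space — is exactly what distinguishes Proposition~\ref{arbfield} from Theorem~\ref{space}, and is illustrated by the $GF(3)$ example computed just before the proposition.
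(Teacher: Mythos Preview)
Your proof is correct and follows exactly the same approach as the paper: the paper's proof is the one-line observation that each $\overline{\gamma}$ is a closed walk in $Tch(P)$, hence $z_D(\overline{\gamma}) \in Z_D(Tch(P))$ by definition. You have simply fleshed out why the shadow of a closed walk is closed, which the paper takes for granted from the earlier discussion in Section~\ref{sec:circpart}.
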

\begin{proof}
If $\gamma \in \Gamma$, then $\overline{\gamma}$ is a closed walk in $Tch(P)$, so $z_D(\overline{\gamma})$ is an element of $Z_D(Tch(P))$.
\end{proof}

\section{Theorem~\ref{main} over \texorpdfstring{$\mathbb{R}$}{R}}
\label{sec:easy}

In this section we show that if $F$ is a 4-regular graph with an Euler system $C$, then the machinery of Section~\ref{sec:circpart} provides an $\mathbb{R}$-representable sheltering matroid for $\mathcal{Z}_3(\mathcal{I}(C))$. The basic idea is to construct a single matrix which contains $M_{\mathbb{R},\Gamma}(C,P,D)$ matrices for all circuit partitions of $F$. In order to do this we need a systematic way to choose oriented versions of touch-graphs. The approach we use is not the only possible one, but it is convenient because it is easy to describe and it is connected with signed interlacement systems that have been discussed by several authors~\cite{Bu, Jo, Lau, MP, Tn, Tsign}.

Let $C$ be an oriented Euler system of a 4-regular graph $F$, i.e., each circuit of $C$ is given with an orientation. Let $\Gamma^{o}$ be a set of~\emph{consistently oriented} induced circuits of $C$, i.e., each $C_{\Gamma^{o}}(v) \in \Gamma^{o}$ is oriented consistently with the circuit of $C$ that contains it. For each $v \in V(F)$ we designate one passage of $C$ through $v$ as $v^-$ and the other passage of $C$ through $v$ as $v^+$, in such a way that when we follow a circuit of $C$ in accord with its given orientation, we have $\ldots, v^-, C_{\Gamma^{o}}(v), v^+, \ldots$.

\begin{definition}
\label{bigmatrix}We use these vertex signs to define an integer matrix
\[
IAS_{\Gamma^{o}}(C)=
\begin{pmatrix}
I & A & B
\end{pmatrix}
\]
as follows.
\begin{enumerate}
\item $I$ is the $V(F) \times V(F)$ identity matrix.

\item $A$ is the $V(F) \times V(F)$ matrix given by:
\begin{gather*}
A_{vw}=
\begin{cases}
1\text{,} & \text{if }v\neq w \text{ and }C_{\Gamma^{o}}(v)\text{ includes
}w^+ \text{ but not } w^-\\
-1\text{,} & \text{if }v\neq w \text{ and }C_{\Gamma^{o}}(v)\text{ includes
}w^- \text{ but not } w^+\\
0\text{,} & \text{otherwise}
\end{cases}
\end{gather*}

\item $B$  is the $V(F) \times V(F)$ matrix given by:
\begin{gather*}
B_{vw}=
\begin{cases}
0\text{,} & \text{if }v\neq w \text{ and }C_{\Gamma^{o}}(v)\text{ includes
neither }w^+ \text{ nor } w^-\\
2\text{,} & \text{if }v\neq w \text{ and }C_{\Gamma^{o}}(v)\text{ includes both
}w^+ \text{ and } w^-\\
1\text{,} & \text{otherwise}
\end{cases}
\end{gather*}

\end{enumerate}

\end{definition}

The columns of $IAS_{\Gamma^{o}}(C)$ are indexed by elements of $\mathfrak{T}(F)$ in the following way: for each $w\in V(F)$ the $w$ column of $I$ corresponds to $\phi_C(w)$, the $w$ column of $A$ corresponds to $\chi_C(w)$, and the $w$ column of $B$ corresponds to $\psi_C(w)$.
\begin{definition}
\label{standardg}
Let $C$ be an oriented Euler system of $F$, $\Gamma^{o}$ a set of consistently oriented induced circuits of $C$, and $P$ a circuit partition of $F$. Then $D_{\Gamma^{o}}$ denotes the directed version of $Tch(P)$ in which for each $v \in V(F)$, the initial half-edge of $e_v$ is $\overline{h}$, where $h$ is the half-edge of $F$ directed toward $v$ on $C_{\Gamma^{o}}(v)$.
\end{definition}

\begin{proposition}
\label{standardmg}
Let $C$ be an oriented Euler system of $F$, $\Gamma^{o}$ a set of consistently oriented induced circuits of $C$, and $P$ a circuit partition of $F$. Then $M_{\mathbb{R},\Gamma^{o}}(C,P,D_{\Gamma^{o}})$ is identical to the $V(F) \times V(F)$ submatrix of $IAS_{\Gamma^{o}}(C)$ whose columns correspond to the transitions in $P$.
\end{proposition}

\begin{proof}
Let $w \in V(F)$. We index the four half-edges incident on $w$ as $h_1,h_2,h_3,h_4$ so that a circuit of $C$ is $ \ldots ,h_1,w^{-},h_2, \ldots ,h_3,w^{+},h_4, \ldots $ and $C_{\Gamma^{o}}(w)$ is $w^{-},h_2, \ldots ,h_3,w^{+}$. Then $\pi_P(h_3)=\overline{h_3}$ is the initial half-edge of $e_w$ in $D_{\Gamma^{o}}$.

Suppose $P(w)=\phi_C(w)$. For $v \neq w$, a passage of $C_{\Gamma^{o}}(v)$ through $w$ is $h_1,w,h_2$ or $h_3,w,h_4$. Either way, $C_{\Gamma^{o}}(v)$ follows a circuit of $P$ through $w$, so $M_{\mathbb{R},\Gamma^{o}}(C,P,D_{\Gamma^{o}})_{vw}=0$. On the other hand, the passage of $C_{\Gamma^{o}}(w)$ through $w$ is $h_3,w,h_2$. This passage agrees with the direction of $e_w$ in $D_{\Gamma^{o}}$, so $M_{\mathbb{R},\Gamma^{o}}(C,P,D_{\Gamma^{o}})_{ww}=1$. We see that the $w$ column of the identity matrix is the same as the $w$ column of $M_{\mathbb{R},\Gamma^{o}}(C,P,D_{\Gamma^{o}})$.

If $P(w)=\chi_C(w)$ then $h_3,w,h_2$ and $h_1,w,h_4$ are the passages of circuit(s) of $P$ through $w$. The initial half-edge of $e_w$ in $D_{\Gamma^{o}}$ is $\overline{h_3}=\overline{h_2}$, and the terminal half-edge is $\overline{h_1}=\overline{h_4}$. The passage of $C_{\Gamma^{o}}(w)$ through $w$ is $h_3,w,h_2$, so $M_{\mathbb{R},\Gamma^{o}}(C,P,D_{\Gamma^{o}})_{ww}=0$. Suppose $v\neq w \in V(F)$. If $C_{\Gamma^{o}}(v)$ includes the passage $h_1,w,h_2$ through $w$ then as this passage does not agree with the direction of $e_w$ in $D_{\Gamma^{o}}$, it contributes $-1$ to $M_{\mathbb{R},\Gamma^{o}}(C,P,D_{\Gamma^{o}})_{vw}$. On the other hand, if $C_{\Gamma^{o}}(v)$ includes the passage $h_3,w,h_4$ through $w$ then this passage contributes $1$ to $M_{\mathbb{R},\Gamma^{o}}(C,P,D_{\Gamma^{o}})_{vw}$. We see that the $w$ column of $A$ is the same as the $w$ column of $M_{\mathbb{R},\Gamma^{o}}(C,P,D_{\Gamma^{o}})$.

If $P(w)=\psi_C(w)$ then $h_1,w,h_3$ and $h_2,w,h_4$ are the passages of circuit(s) of $P$ through $w$. The initial half-edge of $e_w$ in $D_{\Gamma^{o}}$ is $\overline{h_3}=\overline{h_1}$, and the terminal half-edge is $\overline{h_2}=\overline{h_4}$. The passage of $C_{\Gamma^{o}}(w)$ through $w$ is $h_3,w,h_2$, so $M_{\mathbb{R},\Gamma^{o}}(C,P,D_{\Gamma^{o}})_{ww}=1$. Suppose $v\neq w \in V(F)$. If $C_{\Gamma^{o}}(v)$ includes the passage $h_1,w,h_2$ through $w$, then this passage contributes $1$ to $M_{\mathbb{R},\Gamma^{o}}(C,P,D_{\Gamma^{o}})_{vw}$. If $C_{\Gamma^{o}}(v)$ includes the passage $h_3,w,h_4$ through $w$, then this passage also contributes $1$ to $M_{\mathbb{R},\Gamma^{o}}(C,P,D_{\Gamma^{o}})_{vw}$. We see that the $w$ column of $B$ is the same as the $w$ column of $M_{\mathbb{R},\Gamma^{o}}(C,P,D_{\Gamma^{o}})$.
\end{proof}
\begin{corollary}
\label{bigoneg}
Let $C$ be an oriented Euler system of $F$, and $\Gamma^{o}$ a set of consistently oriented induced circuits of $C$. Let $M_{\mathbb{R}}[IAS_{\Gamma^{o}}(C)]$ be the matroid represented over $\mathbb{R}$ by $IAS_{\Gamma^{o}}(C)$. Then $M_{\mathbb{R}}[IAS_{\Gamma^{o}}(C)]$ is a strict $3$-sheltering matroid for $\mathcal{Z}_3(\mathcal{I}(C))$.
\end{corollary}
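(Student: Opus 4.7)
The plan is to combine Proposition~\ref{standardmg} and Corollary~\ref{cor:matroid_GF2_R} with the observation that reducing the entries of $IAS_{\Gamma^{o}}(C)$ modulo $2$ recovers $IAS(\mathcal{I}(C))$ column by column under the bijection $W(\mathcal{I}(C)) \leftrightarrow \mathfrak{T}(F)$.

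First I would verify the mod-$2$ claim by inspecting Definition~\ref{bigmatrix}. The $I$ block is unaffected. An entry of $A$ is nonzero exactly when $C_{\Gamma^{o}}(v)$ includes exactly one of $w^{-}, w^{+}$, which is the interlacement condition of Definition~\ref{interlacement}; so $A \pmod 2$ is the adjacency matrix $A(\mathcal{I}(C))$. An entry of $B$ is congruent to $1$ modulo $2$ exactly on the diagonal and wherever $C_{\Gamma^{o}}(v)$ contains exactly one of $w^{-},w^{+}$, so $B \pmod 2$ equals $I + A(\mathcal{I}(C))$. Since the column labels attached to $I$, $A$, $B$ match the column labels $\phi_{\mathcal{I}(C)}, \chi_{\mathcal{I}(C)}, \psi_{\mathcal{I}(C)}$ of $IAS(\mathcal{I}(C))$ under the transition-transversal bijection, the claim follows.

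To prove sheltering, I would fix an arbitrary subtransversal $T \in \mathcal{S}(\mathcal{I}(C))$ and show that the submatrix $N_T$ of $IAS_{\Gamma^{o}}(C)$ indexed by $T$ has the same rank over $\mathbb{R}$ as the rank assigned to $T$ by $\mathcal{Z}_3(\mathcal{I}(C))$. Extend $T$ to a transversal $T'$; under the bijection, $T'$ corresponds to a circuit partition $P$ of $F$. Proposition~\ref{standardmg} identifies the full submatrix of $IAS_{\Gamma^{o}}(C)$ indexed by $T'$ as $M_{\mathbb{R},\Gamma^{o}}(C,P,D_{\Gamma^{o}})$. Letting $S \subseteq V(F)$ be the set of vertices whose selected transition in $T'$ actually belongs to $T$, the matrix $N_T$ is precisely the matrix called $M_{\mathbb{R}}(S)$ in the statement of Corollary~\ref{cor:matroid_GF2_R}. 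That corollary gives $\mathrm{rank}_{\mathbb{R}}(N_T) = \mathrm{rank}_{GF(2)}(M(S))$, and by the mod-$2$ observation of the previous paragraph, $M(S)$ is precisely the submatrix of $IAS(\mathcal{I}(C))$ indexed by $T$. Hence $\mathrm{rank}_{\mathbb{R}}(N_T)$ agrees with the rank $r(T)$ in $\mathcal{Z}_3(\mathcal{I}(C))$, so $M_{\mathbb{R}}[IAS_{\Gamma^{o}}(C)]$ shelters $\mathcal{Z}_3(\mathcal{I}(C))$.

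Strictness is immediate: $IAS_{\Gamma^{o}}(C)$ has exactly $|V(F)|$ rows, so the represented matroid has rank at most $|V(F)|$; the initial identity block forces the opposite inequality, giving rank exactly $|V(F)| = |V(\mathcal{I}(C))|$. The only real care needed is the bookkeeping that aligns three different column-indexing conventions — those of $IAS_{\Gamma^{o}}(C)$, of $M_{\mathbb{R},\Gamma^{o}}(C,P,D_{\Gamma^{o}})$ in Proposition~\ref{standardmg}, and of $IAS(\mathcal{I}(C))$ — but once these identifications are made consistently the argument is a direct assembly of Proposition~\ref{standardmg}, Corollary~\ref{cor:matroid_GF2_R}, and the elementwise mod-$2$ computation, with no further combinatorial input.
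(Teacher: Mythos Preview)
Your proof is correct and follows essentially the same route as the paper: both combine Proposition~\ref{standardmg} with Corollary~\ref{cor:matroid_GF2_R} and the observation that $IAS_{\Gamma^{o}}(C)$ reduces modulo~$2$ to $IAS(\mathcal{I}(C))$. Your write-up is in fact slightly more explicit than the paper's about the mod-$2$ computation for the $B$ block and about handling arbitrary subtransversals via extension to a transversal, but the argument is the same.
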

\begin{proof}
If $T$ is a transversal of $W(G)$ then $F$ has a circuit partition $P$, determined by the transitions corresponding to elements of $T$. The submatrix of $IAS_{\Gamma^{o}}(C)$ corresponding to $T$ is formed by using the columns of $IAS_{\Gamma^{o}}(C)$ corresponding to these transitions, so Proposition~\ref{standardmg} tells us that this submatrix is $M_{\mathbb{R},\Gamma^{o}}(C,P,D_{\Gamma^{o}})$. Corollary~\ref{cor:matroid_GF2_R} tells us that the rank of each set $S \subseteq T$ of columns of this matrix is the same over $GF(2)$ and $\mathbb{R}$.

It is clear from Definition~\ref{bigmatrix} that when we reduce $IAS_{\Gamma^{o}}(C)$ modulo 2, the resulting matrix is $IAS(\mathcal{I}(C))$; hence the last sentence of the preceding paragraph tells us that $M_{\mathbb{R}}[IAS_{\Gamma^{o}}(C)]$ is a $3$-sheltering matroid for $\mathcal{Z}_3(\mathcal{I}(C))$. It is also clear from Definition~\ref{bigmatrix} that the rank of $IAS_{\Gamma^{o}}(C)$ is $n$, so $M_{\mathbb{R}}[IAS_{\Gamma^{o}}(C)]$ is a strict sheltering matroid.
\end{proof}

Corollary~\ref{bigoneg} gives us the implication $1 \Rightarrow 3$ of Theorem~\ref{main}. The implication $3 \Rightarrow 1$ was verified in Section~\ref{sec:proof1}, so we have demonstrated that conditions 1 and 3 are equivalent. The implication $2 \Rightarrow 3$ is obvious, but $1 \Rightarrow 2$ is difficult; as noted at the end of Section~\ref{sec:circpart}, Theorem~\ref{space} does not hold over fields of odd characteristic, because the determinant of $M_{\mathbb{R},\Gamma^{o}}(C,P,D)$ may be an odd integer $\notin \{-1,1\}$. (Theorem~\ref{space} does guarantee that the determinant of $M_{\mathbb{R},\Gamma^{o}}(C,P,D)$ cannot be an even integer other than 0.) The key to our proof of Theorem~\ref{main} is the fact that for certain special sets of induced circuits, $\det M_{\mathbb{R},\Gamma^{o}}(C,P,D) \in \{-1,0,1\}$ is always true. We verify this fact in Section~\ref{sec:based}.

\section{Based sets of induced circuits}
\label{sec:based}

Let $F$ be a 4-regular graph, with an Euler system $C$ and an edge $e$. Definition~\ref{funcirc} implies that for each vertex $v$ in the same connected component of $F$ as $e$, one induced circuit of $C$ at $v$ includes $e$, and the other excludes $e$.

\begin{definition}
\label{basedcirc}
Let $E$ be a set that contains one edge from each component of $F$, and $C$ an Euler system of $F$. A set of oriented induced circuits of $C$ that exclude every element of $E$ is said to be \emph{based on} $E$. If $\Gamma$ is based on $E$ then we also say that the elements of $E$ are~\emph{base edges} for $\Gamma$.
\end{definition}

We use the notation $\Gamma_E$ to indicate that $\Gamma$ is based on $E$. Notice that if we are given $C$ and $E$ then there are $2^{|E|}$ different sets denoted $\Gamma_E$, distinguished by the orientations of the induced circuits.

The usefulness of based induced circuits stems from the following.

\begin{lemma}
\label{sameops}
Let $E$ be a set of base edges in a 4-regular graph $F$, let $C$ and $\widetilde{C}$ be Euler systems of $F$, and let $\Gamma_E$ and $\widetilde{\Gamma}_E$ be based sets of oriented induced circuits of $C$ and $\widetilde{C}$ (respectively). Then these two properties hold.
\begin{enumerate}[I.]

\item For every circuit partition $P$ and every directed version $D$ of $Tch(P)$, the matrices $M_{\mathbb{R},\Gamma_E}(C,P,D)$ and $M_{\mathbb{R},\widetilde{\Gamma}_E}(\widetilde{C},P,D)$ are row equivalent over $\mathbb{Z}$. That is, one matrix can be obtained from the other using the operations ``multiply a row by $-1$'' and ``add an integer multiple of one row to a different row''.

\item There is a single sequence of row operations which transforms $M_{\mathbb{R},\Gamma_E}(C,P,D)$ into $M_{\mathbb{R},\widetilde{\Gamma}_E}(\widetilde{C},P,D)$ for every choice of $P$ and $D$.
\end{enumerate}
\end{lemma}

\begin{proof}
According to a theorem of Kotzig~\cite{K}, it suffices to verify properties I and II when $C$ and $\widetilde{C}$ are related through a single $\kappa$-transformation. That is, we may assume that $\widetilde{C}$ is the Euler system of $F$ obtained from $C$ by reversing the direction in which the circuit of $C$ incident at $v$ traverses $C_{\Gamma_E}(v)$. It is easy to see that this partial reversal affects some transition labels at vertices that appear precisely once on $C_{\Gamma_E}(v)$: $\phi_C(v)=\psi_{\widetilde{C}}(v)$, $\psi_C(v)=\phi_{\widetilde{C}}(v)$, and if $w \neq v$ appears precisely once on $C_{\Gamma_E}(v)$ then $\chi_C(w)=\psi_{\widetilde{C}}(w)$ and $\psi_C(w)=\chi_{\widetilde{C}}(w)$. Otherwise, transition labels with respect to $C$ and $\widetilde{C}$ are the same.

In order to avoid proliferation of subcases we assume that the Euler circuits included in $C$ are given with orientations, the circuits of $\widetilde{C}$ inherit orientations from the circuits of $C$, and all the induced circuits included in $\Gamma_E$ and $\widetilde{\Gamma}_E$ respect these orientations. As mentioned before the example at the end of Section~\ref{sec:circpart}, this presumption does not involve a significant loss of generality because the effect of reversing the orientation of an induced circuit is simply to multiply the corresponding row of $M_{\mathbb{R},\Gamma_E}(C,P,D)$ or $M_{\mathbb{R},\widetilde{\Gamma}_E}(\widetilde{C},P,D)$ by $-1$.

Property I. Suppose first that $v$ is a vertex with $P(v)=\psi_C(v)$.

1. Suppose $w \neq v \in V(F)$ and $C_{\Gamma_E}(w)$ does not intersect $C_{\Gamma_E}(v)$. Then $C_{\Gamma_E}(w)$ and $\widetilde{C}_{\widetilde{\Gamma}_E}(w)$ are the same circuit in $F$, so they provide the same $w$ row for $M_{\mathbb{R},\Gamma_E}(C,P,D)$ and $M_{\mathbb{R},\widetilde{\Gamma}_E}(\widetilde{C},P,D)$.

2. Suppose $w \in V(F)$ and $C_{\Gamma_E}(w)$ is contained in $C_{\Gamma_E}(v)$. (It may be that $v=w$.) Then $C_{\Gamma_E}(w)$ and $\widetilde{C}_{\widetilde{\Gamma}_E}(w)$ are the same circuit in $F$, but oriented in opposite directions.  It follows that the $w$ row of $M_{\mathbb{R},\Gamma_E}(C,P,D)$ is the negative of the $w$ row of $M_{\mathbb{R},\widetilde{\Gamma}_E}(\widetilde{C},P,D)$.

3. Suppose $w \neq v \in V(F)$ and $C_{\Gamma_E}(w)$ contains $C_{\Gamma_E}(v)$.  Then the vectors $z_D(\overline{C_{\Gamma_E}(w)})$ and $z_D(\overline{\widetilde{C}_{\widetilde{\Gamma}_E}(w)})$ are obtained using the same contributions from passages through vertices outside $C_{\Gamma_E}(v)$, and opposite contributions from passages through vertices inside $C_{\Gamma_E}(v)$, other than $v$ itself. It follows that if $x \neq v \in V(F)$ then the $x$ coordinates of $z_D(\overline{\widetilde{C}_{\widetilde{\Gamma}_E}(w)})$ and $z_D(\overline{C_{\Gamma_E}(w)})-2 \cdot z_D(\overline{C_{\Gamma_E}(v)})$ are the same.

In contrast, the situation at $v$ is complicated by the fact that the passages of $C_{\Gamma_E}(v)$ and $C_{\Gamma_E}(w)$ through $v$ are different. We index the half-edges incident on $v$ as $h_1,h_2,h_3,h_4$ in such a way that the circuit of $C$ incident on $v$ is $e,\ldots,h_1,v,h_2,\ldots,h_3,v,h_4,\ldots$, where $e \in E$. Then the fact that $P(v)=\psi_C(v)$ indicates that the single transitions at $v$ included in $P$ are $\{h_1,h_3\}$ and $\{h_2,h_4\}$. It follows that the passages $h_1,v,h_2$; $h_1,v,h_4$; $h_3,v,h_2$ and $h_3,v,h_4$ all have the same ``shadow'' in $Tch(P)$. Let us suppose for convenience that the direction of $e_v$ in $D$ follows the common shadow of these four passages through $v$. (If the opposite is true then all the $v$ coordinates mentioned in the next paragraph should be multiplied by $-1$; but the conclusion of the paragraph after that is unchanged.)

The passages of $C_{\Gamma_E}(w)$ through $v$ are $h_1,v,h_2$ and $h_3,v,h_4$, so the $v$ coordinate of $z_D(\overline{C_{\Gamma_E}(w)})$ is 2. The passage of $C_{\Gamma_E}(v)$ through $v$ is $h_3,v,h_2$, so the $v$ coordinate of $z_D(\overline{C_{\Gamma_E}(v)})$ is 1. It follows that the $v$ coordinate of $z_D(\overline{C_{\Gamma_E}(w)})-2 \cdot z_D(\overline{C_{\Gamma_E}(v)})$ is 0. As the passages of $\widetilde{C}_{\widetilde{\Gamma}_E}(w)$ through $v$ are $h_1,v,h_3$ and $h_2,v,h_4$, the $v$ coordinate of $z_D(\overline{\widetilde{C}_{\widetilde{\Gamma}_E}(w)})$ is also 0.

We see that $z_D(\overline{\widetilde{C}_{\widetilde{\Gamma}_E}(w)})=z_D(\overline{C_{\Gamma_E}(w)})-2 \cdot z_D(\overline{C_{\Gamma_E}(v)})$. That is, the $w$ row of $M_{\mathbb{R},\widetilde{\Gamma}_E}(\widetilde{C},P,D)$ is obtained by subtracting 2 times the $v$ row of $M_{\mathbb{R},\Gamma_E}(C,P,D)$ from the $w$ row of $M_{\mathbb{R},\Gamma_E}(C,P,D)$.

4. Suppose now that $C_{\Gamma_E}(w)$ intersects $C_{\Gamma_E}(v)$ but neither induced circuit contains the other. Then $\widetilde{C}_{\widetilde{\Gamma}_E}(w)$ is obtained from $C_{\Gamma_E}(w)$ by replacing $C_{\Gamma_E}(v) \cap C_{\Gamma_E}(w)$ with $C_{\Gamma_E}(v) \setminus C_{\Gamma_E}(w)$, oriented in the opposite direction from its orientation in $C_{\Gamma_E}(v)$. Consequently $z_D(\overline{\widetilde{C}_{\widetilde{\Gamma}_E}(w)})$ $=z_D(\overline{C_{\Gamma_E}(w)})-z_D(\overline{C_{\Gamma_E}(v)})$, as far as the $x$ coordinates with $x \notin \{v,w\}$ are concerned.

For $v$ and $w$, though, we have complications similar to the complications at $v$ in part 3. As discussed in part 3, the passages of $C_{\Gamma_E}(v)$ and $C_{\Gamma_E}(w)$ through $v$ have the same shadow in $Tch(P)$, so the $v$ coordinate of $z_D(\overline{C_{\Gamma_E}(w)})-z_D(\overline{C_{\Gamma_E}(v)})$ is 0. The $v$ coordinate of $z_D(\overline{\widetilde{C}_{\widetilde{\Gamma}_E}(w)})$ is also 0, because $P(v)=\phi_{\widetilde{C}}(v)$.

We claim that the $w$ coordinates of $z_D(\overline{C_{\Gamma_E}(w)})-z_D(\overline{C_{\Gamma_E}(v)})$ and $z_D(\overline{\widetilde{C}_{\widetilde{\Gamma}_E}(w)})$ are the same, too. To verify this claim we need a detailed analysis of half-edges like the one in part 3. Let $h_1,h_2,h_3,h_4$ be the half-edges of $F$ incident on $w$, indexed in such a way that a circuit of $C$ is of the form $e,\ldots,h_1,w,h_2,\ldots,h_3,w,h_4,\ldots$, with $e \in E$.

If $P(w)=\phi_C(w)$ then the single transitions at $w$ included in $P$ are $\{h_1,h_2\}$ and $\{h_3,h_4\}$. It follows that the passages $h_1,w,h_3$; $h_1,w,h_4$; $h_2,w,h_3$ and $h_2,w,h_4$ all have the same ``shadow'' in $Tch(P)$. Let us assume that the direction of $e_w$ in $D$ follows this shadow. The passage of $C_{\Gamma_E}(w)$ through $w$ is $h_3,w,h_2$, so the $w$ coordinate of $z_D(\overline{C_{\Gamma_E}(w)})$ is $-1$. The passage of $C_{\Gamma_E}(v)$ through $w$ might be $h_1,w,h_2$ or $h_3,w,h_4$; $\{h_1,h_2\}$ and $\{h_3,h_4\}$ are single transitions of $P$, so the $w$ coordinate of $z_D(\overline{C_{\Gamma_E}(v)})$ is 0. The passage of $\widetilde{C}_{\widetilde{\Gamma}_E}(w)$ through $w$ is $h_3,w,h_1$ or $h_4,w,h_2$. Either way, the $w$ coordinate of $z_D(\overline{\widetilde{C}_{\widetilde{\Gamma}_E}(w)})$ is $-1$, the same as the $w$ coordinate of $z_D(\overline{C_{\Gamma_E}(w)})-z_D(\overline{C_{\Gamma_E}(v)})$.

If $P(w)=\chi_C(w)$ then the single transitions at $w$ included in $P$ are $\{h_1,h_4\}$ and $\{h_2,h_3\}$, so the passages $h_1,w,h_2$; $h_1,w,h_3$; $h_4,w,h_2$ and $h_4,w,h_3$ all have the same ``shadow'' in $Tch(P)$. Let us assume that the direction of $e_w$ in $D$ follows this shadow. The passage of $C_{\Gamma_E}(w)$ through $w$ is $h_3,w,h_2$; as $\{h_2,h_3\}$ is a single transition of $P$, the $w$ coordinate of $z_D(\overline{C_{\Gamma_E}(w)})$ is 0. The passage of $C_{\Gamma_E}(v)$ through $w$ might be $h_1,w,h_2$ or $h_3,w,h_4$; we refer to these possibilities as subcases (i) and (ii), respectively. The passage of $\widetilde{C}_{\widetilde{\Gamma}_E}(w)$ through $w$ is $h_3,w,h_1$ in subcase (i) and $h_4,w,h_2$ in subcase (ii); in either subcase the shadows in $Tch(P)$ of the passages of $C_{\Gamma_E}(v)$ and $\widetilde{C}_{\widetilde{\Gamma}_E}(w)$ through $w$ are opposites of each other. Consequently the $w$ coordinates of $z_D(\overline{C_{\Gamma_E}(w)})-z_D(\overline{C_{\Gamma_E}(v)})$ and $z_D(\overline{\widetilde{C}_{\widetilde{\Gamma}_E}(w)})$ are the same if $P(w)=\chi_C(w)$.

The last possibility to consider is $P(w)=\psi_C(w)$. Then the single transitions at $w$ included in $P$ are $\{h_1,h_3\}$ and $\{h_2,h_4\}$, and the passages $h_1,w,h_2$; $h_1,w,h_4$; $h_3,w,h_2$ and $h_3,v,h_4$ all have the same ``shadow'' in $Tch(P)$. We presume that the direction of $e_w$ in $D$ follows this shadow. The passage of $C_{\Gamma_E}(w)$ through $w$ is $h_3,w,h_2$, so the $w$ coordinate of $z_D(\overline{C_{\Gamma_E}(w)})$ is 1. The passage of $C_{\Gamma_E}(v)$ through $w$ is $h_1,w,h_2$ or $h_3,w,h_4$, so the $w$ coordinate of $z_D(\overline{C_{\Gamma_E}(v)})$ is 1. The passage of $\widetilde{C}_{\widetilde{\Gamma}_E}(w)$ through $w$ is $h_3,w,h_1$ or $h_4,w,h_2$, so the $w$ coordinate of $z_D(\overline{\widetilde{C}_{\widetilde{\Gamma}_E}(w)})$ is 0. We see that the $w$ coordinates of $z_D(\overline{C_{\Gamma_E}(w)})-z_D(\overline{C_{\Gamma_E}(v)})$ and $z_D(\overline{\widetilde{C}_{\widetilde{\Gamma}_E}(w)})$ are both 0.

Parts 1--4 complete the proof in case $P(v)=\psi_C(v)$. As noted early in the argument, $P(v)=\psi_C(v)$ implies $P(v)=\phi_{\widetilde{C}}(v)$. In the same way, the case $P(v)=\phi_C(v)$ includes $P(v)=\psi_{\widetilde{C}}(v)$. Consequently the argument for $P(v)=\phi_C(v)$ is obtained by interchanging $C$ and $\widetilde{C}$ in parts 1--4.

It remains to consider the possibility that $P(v)=\chi_C(v)$. The argument follows the same outline as above, but the details are different in some places.

5. If $C_{\Gamma_E}(w)$ does not intersect $C_{\Gamma_E}(v)$ then $C_{\Gamma_E}(w)$ and $\widetilde{C}_{\widetilde{\Gamma}_E}(w)$ are the same circuit in $F$, so $z_D(\overline{C_{\Gamma_E}(w)})=z_D(\overline{\widetilde{C}_{\widetilde{\Gamma}_E}(w)})$.

6. If $C_{\Gamma_E}(w)$ is contained in $C_{\Gamma_E}(v)$ then $C_{\Gamma_E}(w)$ and $\widetilde{C}_{\widetilde{\Gamma}_E}(w)$ are the same circuit in $F$, but with opposite orientations. Hence $z_D(\overline{C_{\Gamma_E}(w)})=-z_D(\overline{\widetilde{C}_{\widetilde{\Gamma}_E}(w)})$.

7. Suppose $v \neq w$ and $C_{\Gamma_E}(w)$ contains $C_{\Gamma_E}(v)$. Then $z_D(\overline{C_{\Gamma_E}(w)})$ and $z_D(\overline{\widetilde{C}_{\widetilde{\Gamma}_E}(w)})$ are obtained using the same contributions from passages through vertices outside $C_{\Gamma_E}(v)$, and opposite contributions from passages through vertices inside $C_{\Gamma_E}(v)$. Hence $z_D(\overline{\widetilde{C}_{\widetilde{\Gamma}_E}(w)})=z_D(\overline{C_{\Gamma_E}(w)})-2 \cdot z_D(\overline{C_{\Gamma_E}(v)})$ as far as passages inside or outside of $C_{\Gamma_E}(v)$ are concerned.

We must still discuss the $v$ coordinate. Again, we index the half-edges incident on $v$ as $h_1,h_2,h_3,h_4$ in such a way that a circuit of $C$ is $e,\ldots,h_1,v,h_2,\ldots,h_3,v,h_4,\ldots$, where $e \in E$. Then the fact that $P(v)=\chi_C(v)$ indicates that the single transitions at $v$ included in $P$ are $\{h_1,h_4\}$ and $\{h_3,h_2\}$. It follows that the passages $h_1,v,h_2$; $h_1,v,h_3$; $h_4,v,h_2$ and $h_4,v,h_3$ all have the same ``shadow'' in $Tch(P)$; we presume that the  direction of $e_v$ in $D$ follows the common shadow of these four passages through $v$. The passages of $C_{\Gamma_E}(w)$ through $v$ are $h_1,v,h_2$ and $h_3,v,h_4$, so the $v$ coordinate of $z_D(\overline{C_{\Gamma_E}(w)})$ is $1-1=0$. The passage of $C_{\Gamma_E}(v)$ through $v$ is $h_3,v,h_2$, so the $v$ coordinate of $z_D(\overline{C_{\Gamma_E}(v)})$ is 0. It follows that the $v$ coordinate of $z_D(\overline{C_{\Gamma_E}(w)})-2 \cdot z_D(\overline{C_{\Gamma_E}(v)})$ is 0. As the passages of $\widetilde{C}_{\widetilde{\Gamma}_E}(w)$ through $v$ are $h_1,v,h_3$ and $h_2,v,h_4$, the $v$ coordinate of $z_D(\overline{\widetilde{C}_{\widetilde{\Gamma}_E}(w)})$ is also $1-1=0$.

Just as before, it follows that $z_D(\overline{\widetilde{C}_{\widetilde{\Gamma}_E}(w)})=z_D(\overline{C_{\Gamma_E}(w)})-2 \cdot z_D(\overline{C_{\Gamma_E}(v)})$.

8. If $C_{\Gamma_E}(w)$ intersects $C_{\Gamma_E}(v)$ but neither contains the other then just as before, $z_D(\overline{\widetilde{C}_{\widetilde{\Gamma}_E}(w)})$ $=z_D(\overline{C_{\Gamma_E}(w)})-z_D(\overline{C_{\Gamma_E}(v)})$ as far as the contributions of vertices $x \notin \{v,w\}$ are concerned.

As in part 7, we suppose that $C$ is of the form $e,\ldots,h_1,v,h_2,\ldots,h_3,v,h_4,\ldots$; the single transitions at $v$ included in $P$ are $\{h_1,h_4\}$ and $\{h_3,h_2\}$; and the direction of $e_v$ in $D$ follows the common ``shadow'' of the passages $h_1,v,h_2$; $h_1,v,h_3$; $h_4,v,h_2$ and $h_4,v,h_3$. The $v$ coordinate of $z_D(\overline{C_{\Gamma_E}(v)})$ is 0, because $P(v)=\chi_{C}(v)$. If $C_{\Gamma_E}(w)$ includes the passage $h_1,v,h_2$ through $v$ then the $v$ coordinate of $z_D(\overline{C_{\Gamma_E}(w)})$ is 1. Also, $\widetilde{C}_{\widetilde{\Gamma}_E}(w)$ includes the passage $h_1,v,h_3$ through $v$, so the $v$ coordinate of $z_D(\overline{\widetilde{C}_{\widetilde{\Gamma}_E}(w)})$ is 1 too. On the other hand, if $C_{\Gamma_E}(w)$ includes the passage $h_3,v,h_4$ through $v$ then the $v$ coordinates of $z_D(\overline{C_{\Gamma_E}(w)})$ and $z_D(\overline{\widetilde{C}_{\widetilde{\Gamma}_E}(w)})$ are both $-1$. We see that the $v$ coordinates of $z_D(\overline{\widetilde{C}_{\widetilde{\Gamma}_E}(w)})$ and  $z_D(\overline{C_{\Gamma_E}(w)})-z_D(\overline{C_{\Gamma_E}(v)})$ are always the same.

We claim that in addition, the $w$ coordinates of $z_D(\overline{\widetilde{C}_{\widetilde{\Gamma}_E}(w)})$ and $z_D(\overline{C_{\Gamma_E}(w)})-z_D(\overline{C_{\Gamma_E}(v)})$ are the same. To verify this claim we can use the same argument as in part 4; the different value of $P(v)$ is irrelevant.

Property II. In the argument above, parts 1 and 5 have $z_D(\overline{\widetilde{C}_{\widetilde{\Gamma}_E}(w)})=z_D(\overline{C_{\Gamma_E}(w)})$, parts 2 and 6 have $z_D(\overline{\widetilde{C}_{\widetilde{\Gamma}_E}(w)})=-z_D(\overline{C_{\Gamma_E}(w)})$, parts 3 and 7 have $z_D(\overline{\widetilde{C}_{\widetilde{\Gamma}_E}(w)})=z_D(\overline{C_{\Gamma_E}(w)})-2 \cdot z_D(\overline{C_{\Gamma_E}(v)})$, and parts 4 and 8 have $z_D(\overline{\widetilde{C}_{\widetilde{\Gamma}_E}(w)})=z_D(\overline{C_{\Gamma_E}(w)})-z_D(\overline{C_{\Gamma_E}(v)})$. This is not quite enough as the case $P(v)=\phi_C(v)$ was not discussed in detail, but instead described by interchanging $C$ and $\widetilde{C}$ in parts 1---4. When we interchange $C$ and $\widetilde{C}$ in the equalities $z_D(\overline{\widetilde{C}_{\widetilde{\Gamma}_E}(w)})=z_D(\overline{C_{\Gamma_E}(w)})$ and $z_D(\overline{\widetilde{C}_{\widetilde{\Gamma}_E}(w)})=-z_D(\overline{C_{\Gamma_E}(w)})$, we obtain the same equalities. When we interchange $C$ and $\widetilde{C}$ in $z_D(\overline{\widetilde{C}_{\widetilde{\Gamma}_E}(w)})=z_D(\overline{C_{\Gamma_E}(w)})-2 \cdot z_D(\overline{C_{\Gamma_E}(v)})$ we obtain $z_D(\overline{C_{\Gamma_E}(w)})=z_D(\overline{\widetilde{C}_{\widetilde{\Gamma}_E}(w)})-2 \cdot z_D(\overline{\widetilde{C}_{\widetilde{\Gamma}_E}(v)})$ or equivalently, $z_D(\overline{C_{\Gamma_E}(w)})+2 \cdot z_D(\overline{\widetilde{C}_{\widetilde{\Gamma}_E}(v)})=z_D(\overline{\widetilde{C}_{\widetilde{\Gamma}_E}(w)})$. This is the same as the original equality because part 2 tells us that $z_D(\overline{\widetilde{C}_{\widetilde{\Gamma}_E}(v)})=-z_D(\overline{C_{\Gamma_E}(v)})$. Similarly, when we interchange $C$ and $\widetilde{C}$ in $z_D(\overline{\widetilde{C}_{\widetilde{\Gamma}_E}(w)})=z_D(\overline{C_{\Gamma_E}(w)})-z_D(\overline{C_{\Gamma_E}(v)})$, we obtain $z_D(\overline{C_{\Gamma_E}(w)})=z_D(\overline{\widetilde{C}_{\widetilde{\Gamma}_E}(w)})-z_D(\overline{\widetilde{C}_{\widetilde{\Gamma}_E}(v)})$, or $z_D(\overline{C_{\Gamma_E}(w)})+z_D(\overline{\widetilde{C}_{\widetilde{\Gamma}_E}(v)})=z_D(\overline{\widetilde{C}_{\widetilde{\Gamma}_E}(w)})$; this is the same as the original equality because $z_D(\overline{\widetilde{C}_{\widetilde{\Gamma}_E}(v)})=-z_D(\overline{C_{\Gamma_E}(v)})$.

It follows that property II holds when $\Gamma_E,\widetilde{\Gamma}_E$ respect the assumption about orientations of induced circuits mentioned at the beginning of the argument above. To deal with induced circuits that might not respect this assumption, we might have to multiply some rows by $-1$; but the same rows will require this multiplication for every $P$. It follows that property II holds when $\widetilde{C}$ is a $\kappa$-transform of $C$, for all choices of $\Gamma_E$ and $\widetilde{\Gamma}_E$.
\end{proof}

\begin{definition}
Let $C$ be an Euler system of a 4-regular graph $F$, and $\Gamma$ a set of oriented induced circuits of $C$. If $v \in V(F)$ then the ``shadow'' of $C_{\Gamma}(v)$ in $Tch(C)$ includes only one edge: $\overline{C_{\Gamma}(v)}=e_v$. We denote by $D_{\Gamma}$ the directed version of $Tch(C)$ in which for each $v \in V(F)$, the direction of $e_v$ is chosen so that when we traverse $C_{\Gamma}(v)$ according to the orientation of $C_{\Gamma}(v)$, our shadow traverses $e_v$ in the $D_{\Gamma}$ direction.
\end{definition}

That is, $D_{\Gamma}$ is the directed version of $Tch(C)$ with $M_{\mathbb{R},\Gamma}(C,C,D_{\Gamma})=I$.

\begin{theorem}
\label{premul}
Suppose $E$ is a set of base edges in a 4-regular graph $F$, $C$ and $\widetilde{C}$ are Euler systems of $F$, and $\Gamma_E$ and $\widetilde{\Gamma}_E$ are based sets of oriented induced circuits of $C$ and $\widetilde{C}$ (respectively).

Then for every circuit partition $P$ of $F$ and every directed version $D$ of $Tch(P)$,
\[
M_{\mathbb{R},\widetilde{\Gamma}_E}(\widetilde{C},P,D)=M_{\mathbb{R},\widetilde{\Gamma}_E}(\widetilde{C},C,D_{\Gamma_E}) \cdot M_{\mathbb{R},\Gamma_E}(C,P,D) \text{.}
\]

Moreover,
\[
M_{\mathbb{R},\widetilde{\Gamma}_E}(\widetilde{C},C,D_{\Gamma_E}) = M_{\mathbb{R},\Gamma_E}(C,\widetilde{C},D_{\widetilde{\Gamma}_E})^{-1} \text{.}
\]
\end{theorem}
\begin{proof}
Let $A$ be the product of elementary matrices corresponding to the row operations of Lemma~\ref{sameops}. Then Lemma~\ref{sameops} tells us that for every circuit partition $P$ and every directed version $D$ of $Tch(P)$, $M_{\mathbb{R},\widetilde{\Gamma}_E}(\widetilde{C},P,D)=A \cdot M_{\mathbb{R},\Gamma_E}(C,P,D)$. In particular, $A=A \cdot I=A \cdot M_{\mathbb{R},\Gamma_E}(C,C,D_{\Gamma_E})=M_{\mathbb{R},\widetilde{\Gamma}_E}(\widetilde{C},C,D_{\Gamma_E})$.

Taking $P=\widetilde{C}$ and $D=D_{\widetilde{\Gamma}_E}$, the equality just proved tells us that
\[
M_{\mathbb{R},\widetilde{\Gamma}_E}(\widetilde{C},C,D_{\Gamma_E}) \cdot M_{\mathbb{R},\Gamma_E}(C,\widetilde{C},D_{\widetilde{\Gamma}_E})=M_{\mathbb{R},\widetilde{\Gamma}_E}(\widetilde{C},\widetilde{C},D_{\widetilde{\Gamma}_E})=I \text{.}
\]
\end{proof}

We refer to the equalities of Theorem~\ref{premul} as \emph{naturality properties} of the $M_{\mathbb{R},\Gamma_E}(C,P,D)$ matrices.

\begin{corollary}
\label{unimodular}
Let $C$ be an Euler system of a 4-regular graph $F$, and let $\Gamma_E$ be a based set of oriented induced circuits of $C$. Then $\det M_{\mathbb{R},\Gamma_E}(C,P,D) \in \{-1,0,1\}$ for every circuit partition $P$ of $F$ and every directed version $D$ of $Tch(P)$.
\end{corollary}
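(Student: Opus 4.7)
The plan is to split on whether the circuit partition $P$ is itself an Euler system of $F$. The matrix $M_{\mathbb{R},\Gamma_E}(C,P,D)$ is square of size $|V(F)| \times |V(F)|$, so its determinant is nonzero only if it has full rank. By Theorem~\ref{space}, its $|V(F)|$ rows span the cycle space $Z_D(Tch(P))$ over $\mathbb{R}$. Since $Tch(P)$ has $|V(F)|$ edges, $|P|$ vertices, and the same number of connected components $c(F)$ as $F$, Euler's formula yields $\dim Z_D(Tch(P)) = |V(F)| - |P| + c(F)$. This equals $|V(F)|$ precisely when $|P| = c(F)$, i.e.\ when exactly one circuit of $P$ lies in each component of $F$, meaning $P$ is an Euler system. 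So if $P$ is not an Euler system, the determinant is already $0$.

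In the remaining case, set $\widetilde{C} = P$ and fix any based set $\widetilde{\Gamma}_E$ of oriented fundamental circuits of $\widetilde{C}$. By the defining property of $D_{\widetilde{\Gamma}_E}$, $M_{\mathbb{R},\widetilde{\Gamma}_E}(\widetilde{C},\widetilde{C},D_{\widetilde{\Gamma}_E}) = I$. Passing from $D_{\widetilde{\Gamma}_E}$ to the arbitrary $D$ only reverses some edge orientations, each reversal multiplying the corresponding column by $-1$; hence $M_{\mathbb{R},\widetilde{\Gamma}_E}(\widetilde{C},\widetilde{C},D)$ is a diagonal $\pm 1$ matrix, of determinant $\pm 1$.

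Now invoke Corollary~\ref{premul} with the partition $P = \widetilde{C}$ to get
\[
M_{\mathbb{R},\widetilde{\Gamma}_E}(\widetilde{C},\widetilde{C},D) = M_{\mathbb{R},\widetilde{\Gamma}_E}(\widetilde{C},C,D_{\Gamma_E}) \cdot M_{\mathbb{R},\Gamma_E}(C,P,D).
\]
The transition matrix $M_{\mathbb{R},\widetilde{\Gamma}_E}(\widetilde{C},C,D_{\Gamma_E})$ has integer entries, and by Corollary~\ref{inverses} its inverse $M_{\mathbb{R},\Gamma_E}(C,\widetilde{C},D_{\widetilde{\Gamma}_E})$ is also an integer matrix, so its determinant is a unit in $\mathbb{Z}$, namely $\pm 1$. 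Taking determinants in the displayed identity then forces $\det M_{\mathbb{R},\Gamma_E}(C,P,D) = \pm 1$, which completes the proof.

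The main conceptual step is the rank dichotomy in the first paragraph: once one recognizes that full rank is equivalent to $P$ being an Euler system, the naturality of the $M_{\mathbb{R},\Gamma_E}(C,P,D)$ matrices (Corollaries~\ref{premul} and~\ref{inverses}) combined with the ``self-identity'' $M_{\mathbb{R},\widetilde{\Gamma}_E}(\widetilde{C},\widetilde{C},D_{\widetilde{\Gamma}_E})=I$ delivers the remaining case essentially formally. I do not anticipate any serious obstacle beyond assembling these pieces.
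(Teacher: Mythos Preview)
Your proof is correct and follows essentially the same approach as the paper: the same case split on whether $P$ is an Euler system, the same use of Theorem~\ref{space} to force determinant $0$ in the non-Euler case, and the same appeal to the naturality properties (Corollaries~\ref{premul} and~\ref{inverses}) to obtain $\pm 1$ in the Euler case. Your treatment of general $D$ in one step via the factorization from Corollary~\ref{premul} is a mild reorganization of the paper's argument, which first handles $D=D_{\widetilde{\Gamma}_E}$ and then adjusts by column sign changes, but the content is the same.
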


\begin{proof}
If $P$ is not an Euler system then $Tch(P)$ has at least one non-loop edge, so the dimension of the cycle space of $Tch(P)$ is $<|V(F)|$. According to Theorem~\ref{space}, this dimension is the same as the rank of $M_{\mathbb{R},\Gamma_E}(C,P,D)$, so $\det M_{\mathbb{R},\Gamma_E}(C,P,D)=0$.

If $P=\widetilde{C}$ is an Euler system and $D=D_{\widetilde{\Gamma}_E}$, Theorem~\ref{premul} tells us that $M_{\mathbb{R},\widetilde{\Gamma}_E}(C,P,D)^{-1}$ is a matrix of integers; this can only happen if $\det M_{\mathbb{R},\widetilde{\Gamma}_E}(C,P,D)=\pm 1$. If $D \neq D_{\widetilde{\Gamma}_E}$ then as mentioned a couple of paragraphs after Theorem~\ref{space}, $M_{\mathbb{R},\widetilde{\Gamma}_E}(C,P,D)$ is transformed into $M_{\mathbb{R},\widetilde{\Gamma}_E}(C,P,D_{\widetilde{\Gamma}_E})$ by multiplying some columns by $-1$. Consequently $\det M_{\mathbb{R},\widetilde{\Gamma}_E}(C,P,D)=\pm \det M_{\mathbb{R},\widetilde{\Gamma}_E}(C,P,D_{\widetilde{\Gamma}_E})=\pm \pm 1 = \pm 1$.
\end{proof}
Corollary~\ref{unimodular} implies that for based sets of induced circuits, Theorem~\ref{space} holds over all fields.
\begin{theorem}
\label{arbspace}
Let $C$ be an Euler system of a 4-regular graph $F$, and let $\Gamma_E$ be a based set of oriented induced circuits of $C$. Let $P$ be a circuit partition of $F$ and $D$ a directed version of $Tch(P)$. Then for every field $\mathbb{F}$, the row space of $M_{\mathbb{R},\Gamma_E}(C,P,D)$ over $\mathbb{F}$ is equal to the cycle space of $Tch(P)$ over $\mathbb{F}$.
\end{theorem}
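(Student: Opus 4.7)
The plan is to prove the two inclusions separately. The containment $\mathrm{rowspan}_{\mathbb{F}}(M_{\mathbb{R},\Gamma_E}(C,P,D)) \subseteq Z_D(Tch(P))$ over any field $\mathbb{F}$ is immediate from Proposition~\ref{arbfield}. For the reverse direction a dimension count will force equality: setting $k := \dim_{\mathbb{F}} Z_D(Tch(P)) = |E(Tch(P))| - |V(Tch(P))| + c(Tch(P))$, which is independent of $\mathbb{F}$ and by Theorem~\ref{space} equals $\mathrm{rank}_{\mathbb{R}} M_{\mathbb{R},\Gamma_E}(C,P,D)$, it suffices to exhibit a $k \times k$ submatrix of $M_{\mathbb{R},\Gamma_E}(C,P,D)$ with determinant $\pm 1$. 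Such a submatrix is invertible in every $\mathbb{F}$, so the matrix has $\mathbb{F}$-rank at least $k$.

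The first step is to use naturality to reduce to a convenient choice of $C$. By Corollaries~\ref{premul} and~\ref{inverses}, for any Euler system $\widetilde{C}$ with a based oriented fundamental family $\widetilde{\Gamma}_E$,
\[
M_{\mathbb{R},\Gamma_E}(C,P,D) = M_{\mathbb{R},\Gamma_E}(C,\widetilde{C},D_{\widetilde{\Gamma}_E}) \cdot M_{\mathbb{R},\widetilde{\Gamma}_E}(\widetilde{C},P,D),
\]
and the left factor lies in $GL_{|V(F)|}(\mathbb{Z})$. Multiplying by a $\mathbb{Z}$-unimodular matrix does not change rank over any field, so I may freely replace $C$ by an Euler system $\widetilde{C}$ tailored to $P$.

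Next I would construct such a $\widetilde{C}$. Since two $P$-circuits in the same component of $F$ are joined by a chain of $P$-circuits that share consecutive vertices of $F$, we have $c(Tch(P)) = c(F)$; consequently a spanning forest of $Tch(P)$ has $|P| - c(F) = |V(F)| - k$ edges, indexed by a set $S \subseteq V(F)$ with $|V(F) \setminus S| = k$. Processing the spanning-forest edges one at a time in an order consistent with the forest and switching the transition at each corresponding vertex of $S$, each step merges two circuits that are still distinct; after all switches I have an Euler system $\widetilde{C}$ satisfying $P(v) = \widetilde{C}(v) = \phi_{\widetilde{C}}(v)$ for every $v \in V(F) \setminus S$, and I may equip it with any based orientation to obtain $\widetilde{\Gamma}_E$.

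Finally I would compute the $k \times k$ submatrix $N$ of $M_{\mathbb{R},\widetilde{\Gamma}_E}(\widetilde{C},P,D)$ whose rows and columns are indexed by $V(F) \setminus S$. For distinct $v, w \in V(F) \setminus S$, every passage of $C_{\widetilde{\Gamma}_E}(v)$ through $w$ uses the $\phi_{\widetilde{C}}(w) = P(w)$ transition, so contributes $0$ to the $w$-coordinate of $z_D(\overline{C_{\widetilde{\Gamma}_E}(v)})$, giving $N_{v,w}=0$. The unique passage of $C_{\widetilde{\Gamma}_E}(v)$ through $v$ itself uses $\chi_{\widetilde{C}}(v) \neq \phi_{\widetilde{C}}(v) = P(v)$ and contributes $\pm 1$ to $N_{v,v}$. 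Hence $N$ is diagonal with $\pm 1$ entries and $\det N = \pm 1$, as required. I expect the main obstacle to be verifying in complete detail the combinatorial construction of $\widetilde{C}$---in particular, that the spanning-forest-based switching order always merges circuits rather than splitting them---but this should follow from a straightforward induction on the number of processed spanning-forest edges.
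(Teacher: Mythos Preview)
Your argument is correct. It is also genuinely different from the paper's proof, and in some ways more transparent.

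The paper argues by induction on $|P|$. In the base case $|P|=c(F)$ the partition $P$ is itself an Euler system, and Corollary~\ref{unimodular} gives $\det M_{\mathbb{R},\Gamma_E}(C,P,D)=\pm 1$, so the rank is $|V(F)|$ over every field. In the inductive step the paper chooses a single vertex $v$ where two distinct circuits of $P$ meet, switches the transition there to obtain $P'$ with $|P'|=|P|-1$, observes that the two matrices differ in only the $v$ column, and combines this with the inductive hypothesis and the equality $\dim Z_D(Tch(P'))=\dim Z_D(Tch(P))+1$ to bound the $\mathbb{F}$-rank from below. A separate short argument (clearing denominators in a rational dependence) gives the upper bound.

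You instead exhibit, in one shot, a $k\times k$ submatrix of determinant $\pm 1$. The key move is to invoke naturality (Corollaries~\ref{premul} and~\ref{inverses}) to replace $C$ by an Euler system $\widetilde{C}$ obtained from $P$ by switching transitions along a spanning forest of $Tch(P)$; the resulting matrix then has a \emph{diagonal} $k\times k$ block with $\pm 1$ entries, because off-diagonal passages use $\phi_{\widetilde{C}}(w)=P(w)$ and contribute $0$. Your spanning-forest ``merging'' step is sound: if at each stage one contracts the edge of the forest just processed, the remaining forest edges stay non-loops, so each switch genuinely merges two circuits; the order is irrelevant. This yields the lower bound on the $\mathbb{F}$-rank directly, and Proposition~\ref{arbfield} supplies the matching upper bound, so you avoid the paper's separate $\mathbb{R}$-versus-$\mathbb{F}$ rank comparison entirely.

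In effect the paper's induction, when unwound, performs the same sequence of transition switches you do, but packages the conclusion differently: it tracks a one-column change at each step rather than producing an explicit unimodular minor. Your approach buys a concrete witness to full rank (the diagonal block), at the cost of invoking naturality up front; the paper's approach is slightly more self-contained in that it uses Corollary~\ref{unimodular} only for the Euler-system base case.
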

\begin{proof}
Proposition~\ref{arbfield} tells us that the row space of $M_{\mathbb{R},\Gamma_E}(C,P,D)$ over $\mathbb{F}$ is contained in the cycle space of $Tch(P)$ over $\mathbb{F}$. To prove that the two spaces are equal, then, it suffices to prove that they have the same dimension.

As Theorem~\ref{space} holds over~$\mathbb{R}$, the dimension of the cycle space of $Tch(P)$ over $\mathbb{R}$ equals the rank of $M_{\mathbb{R},\Gamma_E}(C,P,D)$ over $\mathbb{R}$. It is well known that the dimension of the cycle space of a graph equals the number of edges excluded from a maximal forest; in particular, this dimension is the same over all fields. Consequently proving the proposition reduces to proving that  $M_{\mathbb{R},\Gamma_E}(C,P,D)$ has the same rank over $\mathbb{F}$ and $\mathbb{R}$.

The smallest possible value of $|P|$, the number of circuits in $P$, is $c(F)$. If $|P|=c(F)$ then $P$ is an Euler system of $F$, and Corollary~\ref{unimodular} tells us that $\det M_{\mathbb{R},\Gamma_E}(C,P,D)= \pm 1$ is nonzero in both $\mathbb{F}$ and $\mathbb{R}$. Consequently the rank of $M_{\mathbb{R},\Gamma_E}(C,P,D)$ is $n$ over both $\mathbb{F}$ and $\mathbb{R}$.

We proceed using induction on $|P|>c(F)$. There must be a component of $F$ that contains more than one circuit of $P$, and this component must contain a vertex $v$ incident on two distinct circuits of $P$. Let $P'$ be a circuit partition that involves the same transitions as $P$, except that $P'(v) \neq P(v)$. Then $P'$ includes the same circuits as $P$, except that the two circuits of $P$ incident at $v$ are united in one circuit of $P'$. (Two circuit partitions related in this way are pictured in Figure~\ref{fourfig3a1}.) It follows that $Tch(P')$ is the graph obtained from $Tch(P)$ by contracting the edge $e_v$ and replacing it with a loop, so the dimension of the cycle space of $Tch(P')$ is one more than the dimension of the cycle space of $Tch(P)$.

Let $D'$ be the directed version of $Tch(P')$ in which edge directions are inherited from $D$. As $|P'|=|P|-1$, the inductive hypothesis tells us that $M_{\mathbb{R},\Gamma_E}(C,P',D')$ has the same rank over $\mathbb{F}$ and $\mathbb{R}$. The only column of $M_{\mathbb{R},\Gamma_E}(C,P',D')$ that is not equal to the corresponding column of $M_{\mathbb{R},\Gamma_E}(C,P,D)$ is the $v$ column, so the ranks of $M_{\mathbb{R},\Gamma_E}(C,P',D')$ and $M_{\mathbb{R},\Gamma_E}(C,P,D)$ cannot differ by more than 1. It follows that the rank of $M_{\mathbb{R},\Gamma_E}(C,P,D)$ over $\mathbb{F}$ is at least the dimension of the cycle space of $Tch(P)$, which is the rank of $M_{\mathbb{R},\Gamma_E}(C,P,D)$ over $\mathbb{R}$.

On the other hand, any relation involving columns of $M_{\mathbb{R},\Gamma_E}(C,P,D)$ over $\mathbb{R}$ can be expressed with coefficients from $\mathbb{Z}$, so it reduces to a linear relation involving columns of $M_{\mathbb{R},\Gamma_E}(C,P,D)$ over $\mathbb{F}$. Consequently the rank of $M_{\mathbb{R},\Gamma_E}(C,P,D)$ over $\mathbb{R}$ is at least the rank of $M_{\mathbb{R},\Gamma_E}(C,P,D)$ over $\mathbb{F}$. We conclude that $M_{\mathbb{R},\Gamma_E}(C,P,D)$ has the same rank over $\mathbb{F}$ and $\mathbb{R}$.
\end{proof}
\begin{corollary}
The nullity of $M_{\mathbb{R},\Gamma_E}(C,P,D)$ over $\mathbb{F}$ is equal to $|P|-c(F)$.
\end{corollary}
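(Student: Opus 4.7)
The plan is to derive the nullity from Theorem~\ref{arbspace} together with the standard formula for the dimension of the cycle space of a graph. Since $M_{\mathbb{R},\Gamma_E}(C,P,D)$ is a $V(F) \times V(F)$ matrix, writing $n = |V(F)|$, the rank-nullity theorem gives nullity equal to $n$ minus the rank over $\mathbb{F}$. By Theorem~\ref{arbspace}, this rank equals the dimension of the cycle space of $Tch(P)$ over $\mathbb{F}$, which by a standard graph-theoretic argument equals $|E(Tch(P))| - |V(Tch(P))| + c(Tch(P))$ and in particular is the same for every field.

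Now I read off the three numerical inputs from Definition~\ref{touch}: the edges of $Tch(P)$ are indexed by $V(F)$, so $|E(Tch(P))| = n$; the vertices of $Tch(P)$ are indexed by the circuits of $P$, so $|V(Tch(P))| = |P|$; and the connected components of $Tch(P)$ correspond bijectively to the connected components of $F$, so $c(Tch(P)) = c(F)$. Substituting gives
\[
\text{rank } M_{\mathbb{R},\Gamma_E}(C,P,D) \;=\; n - |P| + c(F),
\]
and hence
\[
\text{nullity } M_{\mathbb{R},\Gamma_E}(C,P,D) \;=\; n - \bigl(n - |P| + c(F)\bigr) \;=\; |P| - c(F).
\]

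The only point requiring a moment's justification is $c(Tch(P)) = c(F)$. Each circuit $\gamma \in P$ is a closed trail, hence connected, so $v_\gamma$ lies in the component of $Tch(P)$ determined by the component of $F$ containing $\gamma$; conversely, since $P$ partitions $E(F)$, every edge $e_v$ of $Tch(P)$ joins two vertices $v_\gamma, v_{\gamma'}$ whose underlying circuits meet in $F$ at $v$, and walking along edges in $F$ one produces corresponding walks between the $v_\gamma$ in $Tch(P)$. This yields the desired bijection of components.

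I do not expect any genuine obstacle here: everything reduces to Theorem~\ref{arbspace} plus Euler's cycle-space formula, and the combinatorial identifications about $Tch(P)$ are immediate from Definition~\ref{touch}. The work has already been done in Theorem~\ref{arbspace}, where field-independence of the rank was established using the unimodularity granted by Corollary~\ref{unimodular}.
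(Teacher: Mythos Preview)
Your proof is correct and follows essentially the same approach as the paper: invoke Theorem~\ref{arbspace} to identify the rank over $\mathbb{F}$ with the dimension of the cycle space of $Tch(P)$, then plug in $|E(Tch(P))|=|V(F)|$, $|V(Tch(P))|=|P|$, and $c(Tch(P))=c(F)$ to obtain the nullity. The paper's own proof is terser and defers the justification of $c(Tch(P))=c(F)$ to \cite[Section~2]{Tsign}, whereas you supply a short direct argument for it.
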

\begin{proof}
For any graph $G$, the dimension of the cycle space is $|E(G)|-|V(G)|+c(G)$. Therefore Theorem \ref{arbspace} tells us that the rank of $M_{\mathbb{R},\Gamma_E}(C,P,D)$ is $|V(F)|-|P|+c(Tch(P))$. As $M_{\mathbb{R},\Gamma_E}(C,P,D)$ has $|V(F)|$ rows, we conclude that the nullity is $|P|-c(Tch(P))$. It is not hard to see that $c(Tch(P))=c(F)$; details are provided in~\cite[Section 2]{Tsign}. \end{proof}

\section{Completing the proof of Theorem~\ref{main}}
\label{sec:proof2}

Proposition~\ref{standardmg} and Theorem~\ref{premul} imply the following.
\begin{corollary}
\label{prepostmult}Let $E$ be a set of base edges in a 4-regular graph $F$, let $C$ and $\widetilde{C}$ be oriented Euler systems of $F$, and let $\Gamma^{o}_E$ and $\widetilde{\Gamma}^{o}_E$ be sets of based, consistently oriented induced circuits of $C$ and $\widetilde{C}$, respectively.

Then $IAS_{\widetilde{\Gamma}^{o}_E}(\widetilde{C})$ can be obtained from $M_{\mathbb{R},\widetilde{\Gamma}^{o}_E}(\widetilde{C},C,D_{\Gamma^{o}_E}) \cdot IAS_{\Gamma^{o}_E}(C)$ by multiplying some columns by $-1$.
\end{corollary}
\begin{proof}
By Theorem~\ref{premul},
$
M_{\mathbb{R},\widetilde{\Gamma}^{o}_E}(\widetilde{C},P,D_{\Gamma^{o}_E}) =
M_{\mathbb{R},\widetilde{\Gamma}^{o}_E}(\widetilde{C},C,D_{\Gamma^{o}_E}) \cdot M_{\mathbb{R},\Gamma^{o}_E}(C,P,D_{\Gamma^{o}_E})
$
for every circuit partition $P$ of $F$. Recall from Section~\ref{sec:circpart} that $M_{\mathbb{R},\widetilde{\Gamma}^{o}_E}(\widetilde{C},P,D_{\widetilde{\Gamma}^{o}_E})$ is obtained from $M_{\mathbb{R},\widetilde{\Gamma}^{o}_E}(\widetilde{C},P,D_{\Gamma^{o}_E})$ by multiplying some columns by $-1$. By Proposition~\ref{standardmg}, we have that $M_{\mathbb{R},\widetilde{\Gamma}^{o}_E}(\widetilde{C},P,D_{\widetilde{\Gamma}^{o}_E})$ and $M_{\mathbb{R},\Gamma^{o}_E}(C,P,D_{\Gamma^{o}_E})$ are the submatrices of $IAS_{\widetilde{\Gamma}^{o}_E}(\widetilde{C})$ and $IAS_{\Gamma^{o}_E}(C)$, respectively, corresponding to the transitions in $P$. Consequently, we obtain the desired result.
\end{proof}

Recall from the paragraph below Definition~\ref{isomat} that rows and columns of matrices in this paper are not ordered but instead indexed by finite sets. If we were required to preserve the column order of Definition \ref{bigmatrix}, Corollary \ref{prepostmult} would have to mention permuting columns in accordance with the $\phi,\chi,\psi$ representation of elements of $\mathfrak{T}(F)$ with respect to $C$ and $\widetilde{C}$.

As mentioned at the end of Section~\ref{sec:easy}, the only implication of Theorem~\ref{main} that has not yet been verified is $1 \Rightarrow 2$. This implication is part of the following.

\begin{corollary}
\label{bigone}
Let $\mathbb{F}$ be a field, let $E$ be a set that contains one edge from each connected component of a 4-regular graph $F$, and let $\Gamma^{o}_E$ be a set of based, consistently oriented induced circuits of an oriented Euler system $C$ of $F$. Let $M_{\mathbb{F}}[IAS_{\Gamma^{o}_E}(C)]$ denote the matroid represented by the matrix whose entries are the images in $\mathbb{F}$ of the entries of $IAS_{\Gamma^{o}_E}(C)$. Then $M_{\mathbb{F}}[IAS_{\Gamma^{o}_E}(C)]$ is a strict $3$-sheltering matroid for $\mathcal{Z}_3(\mathcal{I}(C))$. Moreover, $M_{\mathbb{F}}[IAS_{\Gamma^{o}_E}(C)]$ is independent of $C$.
\end{corollary}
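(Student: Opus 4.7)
The approach mirrors that of Corollary~\ref{bigoneg}, upgraded to an arbitrary field $\mathbb{F}$ by using Theorem~\ref{arbspace} (which, for based fundamental circuits, is valid over every field) in place of Theorem~\ref{space}, and by invoking Corollary~\ref{prepostmult} for the $C$-independence. I would first handle the sheltering property, then strictness, and finally independence.

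For the sheltering property, let $T$ be any transversal of $W(\mathcal{I}(C))$ and let $P$ be the circuit partition of $F$ determined by the transitions corresponding to elements of $T$. Proposition~\ref{standardmg} identifies the submatrix of $IAS_{\Gamma^{o}_E}(C)$ on the columns indexed by $T$ as $M_{\mathbb{R},\Gamma^{o}_E}(C,P,D_{\Gamma^{o}_E})$. I then claim that for every $S \subseteq T$, the $\mathbb{F}$-rank of the columns corresponding to $S$ coincides with the $GF(2)$-rank, i.e., with the rank of the corresponding subtransversal in $M[IAS(\mathcal{I}(C))]$. To see this, apply Theorem~\ref{arbspace} once over $\mathbb{F}$ and once over $GF(2)$: in each case the row space of $M_{\mathbb{R},\Gamma^{o}_E}(C,P,D_{\Gamma^{o}_E})$ is the cycle space of $Tch(P)$. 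The rank of the submatrix on columns corresponding to $S$ then equals $|S|$ minus the dimension of the subspace of cycles supported on the complementary edge set of $Tch(P)$, and this latter subspace is the cycle space of the induced subgraph on that edge set. Its dimension is purely combinatorial ($|E'|-|V(Tch(P))|+c(Tch(P)[E'])$) and therefore independent of the field. Since the mod-$2$ reduction of $IAS_{\Gamma^{o}_E}(C)$ is $IAS(\mathcal{I}(C))$, this common rank is precisely the rank in $\mathcal{Z}_{3}(\mathcal{I}(C))$.

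Strictness is immediate: $IAS_{\Gamma^{o}_E}(C)$ has exactly $n=|V(F)|$ rows and contains the $n\times n$ identity block $I$, so its $\mathbb{F}$-rank is $n=|V(\mathcal{I}(C))|$. For the $C$-independence statement, let $\widetilde{C}$ be another oriented Euler system of $F$ equipped with a set $\widetilde{\Gamma}^{o}_E$ of based, consistently oriented fundamental circuits. Corollary~\ref{prepostmult} produces a sequence of row sign changes, integer row additions, column sign changes, and column permutations (the permutations being exactly those that align the $\phi,\chi,\psi$ labels with respect to $C$ and $\widetilde{C}$, both of which are canonically identified with $\mathfrak{T}(F)$) that transforms $IAS_{\Gamma^{o}_E}(C)$ into $IAS_{\widetilde{\Gamma}^{o}_E}(\widetilde{C})$. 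All of these operations preserve the column matroid over any field, so after identifying $W(\mathcal{I}(C))$ and $W(\mathcal{I}(\widetilde{C}))$ with $\mathfrak{T}(F)$, the matroids $M_{\mathbb{F}}[IAS_{\Gamma^{o}_E}(C)]$ and $M_{\mathbb{F}}[IAS_{\widetilde{\Gamma}^{o}_E}(\widetilde{C})]$ agree.

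The main obstacle is the rank-comparison step across fields, and this is exactly what the base-edge machinery of Section~\ref{sec:based} is engineered to handle: without Theorem~\ref{arbspace} one has no a priori reason for the $\mathbb{F}$-row space of $M_{\mathbb{R},\Gamma^{o}_E}(C,P,D_{\Gamma^{o}_E})$ to be as large as the $GF(2)$-row space (recall the $3\times 3$ counterexample over $GF(3)$ in Section~\ref{sec:circpart}). Once the cycle-space identification is available over $\mathbb{F}$, the combinatorial (hence field-independent) nature of the graphic matroid of $Tch(P)$ makes the rank computation automatic, and the $C$-independence is then a formal consequence of the naturality properties already recorded in Corollary~\ref{prepostmult}.
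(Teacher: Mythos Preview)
Your $C$-independence argument via Corollary~\ref{prepostmult} matches the paper's. For the sheltering property you take a different route from the paper, and that route contains a formula error that should be fixed.

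You assert that the rank of the submatrix on columns $S$ equals $|S|$ minus the dimension of the space of cycles of $Tch(P)$ supported on $E'=E(Tch(P))\setminus\{e_v:v\in S\}$. This is false: since the row space of the transversal submatrix is the cycle space $Z$ of $Tch(P)$ (Theorem~\ref{arbspace}), the column rank on $S$ is $\dim(\mathrm{proj}_S Z)=\dim Z-\dim(Z\cap\mathbb{F}^{E'})$, i.e., $\dim Z$ (not $|S|$) minus the dimension of the cycles supported on $E'$. For a quick sanity check, let $Tch(P)$ be a triangle and $S$ consist of all three edge-columns: your formula gives $3$, but the matrix has rank $1$. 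Equivalently---and this is really the fact you want---the column matroid of the transversal submatrix is the cographic matroid $M^*(Tch(P))$, whose rank function $r^*(S)=|S|-\bigl(c(Tch(P)[E'])-c(Tch(P))\bigr)$ is purely combinatorial and hence field-independent. With that correction your argument goes through.

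The paper's proof is more direct and does not invoke Theorem~\ref{arbspace}. It compares independence of a subtransversal $S$ over $\mathbb{F}$ and over $\mathbb{R}$: one direction is generic for integer matrices, and for the other it extends $S$ (independent over $\mathbb{R}$, hence independent in $\mathcal{Z}_3(\mathcal{I}(C))$ by Corollary~\ref{bigoneg}) to a transversal basis $B$ of $\mathcal{Z}_3(\mathcal{I}(C))$, then applies Corollary~\ref{unimodular} to get $\det=\pm1$ on $B$, so $B$ (and hence $S$) remains independent over $\mathbb{F}$. Your route instead passes through Theorem~\ref{arbspace}, whose proof already consumes Corollary~\ref{unimodular} plus an induction on $|P|$; what it buys is the pleasant structural identification of each transversal restriction as a cographic matroid, but for the present corollary the paper's bare unimodularity argument is leaner.
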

\begin{proof}
Let $S$ be a subtransversal of $\mathcal{Z}_3(\mathcal{I}(C))$. To show that $\mathcal{Z}_3(\mathcal{I}((C))$ is strictly sheltered by $M_{\mathbb{F}}[IAS_{\Gamma^{o}_E}(C)]$, it suffices show that $S$ is an independent set of $M_{\mathbb{F}}[IAS_{\Gamma^{o}_E}(C)]$ if and only if $S$ is an independent set of $M_{\mathbb{R}}[IAS_{\Gamma^{o}_E}(C)]$. The only-if direction follows since $\mathbb{R}$ is of characteristic $0$. Conversely, assume that $S$ is an independent set of $M_{\mathbb{R}}[IAS_{\Gamma^{o}_E}(C)]$. Since all bases of $\mathcal{Z}_3(\mathcal{I}(C))$ are transversals, $S \subseteq B$ for some transversal $B$ that is a basis of $M_{\mathbb{R}}[IAS_{\Gamma^{o}_E}(C)]$. By Corollary~\ref{unimodular}, $B$ is also a basis of $M_{\mathbb{F}}[IAS_{\Gamma^{o}_E}(C)]$. Hence, $I$ is an independent set of $M_{\mathbb{F}}[IAS_{\Gamma^{o}_E}(C)]$.

To verify the last sentence of the statement, let $\widetilde{C}$ be another Euler system of $F$. By Corollary~\ref{prepostmult} and \cite[Section 6.3]{O}, the matroids of $IAS_{\Gamma^{o}_E}(C)$ and $IAS_{\widetilde{\Gamma}^{o}_E}(\widetilde{C})$ are equal.
\end{proof}

We should point out that although the matroid $M_{\mathbb{F}}[IAS_{\Gamma^{o}_E}(C)]$ is independent of $C$, it is not independent of $E$ or $\mathbb{F}$. An example of dependence on $E$ is given in the next section. For dependence on $\mathbb{F}$, note that the fact that $IAS_{\Gamma^{o}_E}(C)$ reduces to $IAS(\mathcal{I}(C))$ modulo 2 implies that $M[IAS(G)]=M_{GF(2)}[IAS_{\Gamma^{o}_E}(C)]$. As shown in~\cite{Tnewnew}, it follows that if $G$ has a connected component with three or more vertices then the matroid $M[IAS(G)]=M_{GF(2)}[IAS_{\Gamma^{o}_E}(C)]$ is not regular, i.e., it cannot be represented over any field of characteristic $\neq 2$. We deduce that $M_{GF(2)}[IAS_{\Gamma^{o}_E}(C)]$ cannot be isomorphic to $M_{\mathbb{F}}[IAS_{\Gamma^{o}_E}(C)]$ if $\textrm{char}(\mathbb{F}) \neq 2$.

\section{An example}
\label{sec:example}

We illustrate the above results with an example.

Figure~\ref{fourfig2a} illustrates two oriented Euler circuits in a 4-regular graph. As in Figure~\ref{fourfig3a1}, we trace an Euler circuit by walking along the edges of the graph, and maintaining the dashed/plain line status when passing through a vertex. The two illustrated Euler circuits could be represented by the double occurrence words $abcdbacd$ and $abcdcabd$, respectively.

With $E=\{ad\}$, signed versions of these Euler circuits are $C=a^{-}b^{-}c^{-}d^{-}b^{+}a^{+}c^{+}d^{+}$ and $\widetilde{C}=a^{-}b^{-}c^{-}d^{-}c^{+}a^{+}b^{+}d^{+}$. The resulting matrices are as follows.
\[
IAS_{\Gamma^{o}_{\{ad\}}}(C) =
\scalebox{0.9}{
{\let\quad\thinspace
\bordermatrix{
~ &
\phi_C(a) & \phi_C(b) & \phi_C(c) & \phi_C(d) &
\chi_C(a) & \chi_C(b) & \chi_C(c) & \chi_C(d) &
\psi_C(a) & \psi_C(b) & \psi_C(c) & \psi_C(d)\cr
a & 1 & 0 & 0 & 0 & 0 & 0 & -1 & -1 & 1 & 2 & 1 & 1\cr
b & 0 & 1 & 0 & 0 & 0 & 0 & -1 & -1 & 0 & 1 & 1 & 1\cr
c & 0 & 0 & 1 & 0 & 1 & 1 & 0 & -1 & 1 & 1 & 1 & 1\cr
d & 0 & 0 & 0 & 1 & 1 & 1 & 1 & 0 & 1 & 1 & 1 & 1
}}}
\]
\[
IAS_{\widetilde{\Gamma}^{o}_{\{ad\}}}(\widetilde{C})=
\scalebox{0.9}{
{\let\quad\thinspace
\bordermatrix{
~ &
\phi_{\widetilde{C}}(a) & \phi_{\widetilde{C}}(b) & \phi_{\widetilde{C}}(c) & \phi_{\widetilde{C}}(d) &
\chi_{\widetilde{C}}(a) & \chi_{\widetilde{C}}(b) & \chi_{\widetilde{C}}(c) & \chi_{\widetilde{C}}(d) &
\psi_{\widetilde{C}}(a) & \psi_{\widetilde{C}}(b) & \psi_{\widetilde{C}}(c) & \psi_{\widetilde{C}}(d)\cr
a & 1 & 0 & 0 & 0 & 0 & -1 & 0 & -1 & 1 & 1 & 2 & 1\cr
b & 0 & 1 & 0 & 0 & 1 & 0 & 0 & -1 & 1 & 1 & 2 & 1\cr
c & 0 & 0 & 1 & 0 & 0 & 0 & 0 & -1 & 0 & 0 & 1 & 1\cr
d & 0 & 0 & 0 & 1 & 1 & 1 & 1 & 0 & 1 & 1 & 1 & 1
}}}
\]

By Corollary~\ref{prepostmult} we can obtain $IAS_{\Gamma^{o}_{\{ad\}}}(C)$ from $M_{\mathbb{R},\Gamma^{o}_{\{ad\}}}(C,\widetilde{C},D_{\widetilde{\Gamma}^{o}_{\{ad\}}}) \cdot IAS_{\widetilde{\Gamma}^{o}_{\{ad\}}}(\widetilde{C})$ by multiplying some columns by $-1$ to reflect the differences in edge directions of $D_{\Gamma^{o}_{\{ad\}}}$ and $D_{\widetilde{\Gamma}^{o}_{\{ad\}}}$. We proceed to verify this assertion.

Note that $\widetilde{C}=C*d$. Since $\phi_{\widetilde{C}}(d) = \psi_{C}(d)$ and $\chi_{\widetilde{C}}(x) = \psi_{C}(x)$ for all $x \in \{a,b,c\}$ (and the same identities when interchanging $C$ and $\widetilde{C}$), we see that
\[
IAS_{\widetilde{\Gamma}^{o}_{\{ad\}}}(\widetilde{C}) =
\scalebox{0.9}{
{\let\quad\thinspace
\bordermatrix{
~ &
\phi_C(a) & \phi_C(b) & \phi_C(c) & \phi_C(d) &
\chi_C(a) & \chi_C(b) & \chi_C(c) & \chi_C(d) &
\psi_C(a) & \psi_C(b) & \psi_C(c) & \psi_C(d)\cr
a & 1 & 0 & 0 & 1 & 1 & 1 & 2 & -1 & 0 & -1 & 0 & 0\cr
b & 0 & 1 & 0 & 1 & 1 & 1 & 2 & -1 & 1 & 0 & 0 & 0\cr
c & 0 & 0 & 1 & 1 & 0 & 0 & 1 & -1 & 0 & 0 & 0 & 0\cr
d & 0 & 0 & 0 & 1 & 1 & 1 & 1 & 0 & 1 & 1 & 1 & 1
}}}.
\]
Multiplying $IAS_{\widetilde{\Gamma}^{o}_{\{ad\}}}(\widetilde{C})$ on the left by
\[
M_{\mathbb{R},\Gamma^{o}_{\{ad\}}}(C,C*d,D_{\widetilde{\Gamma}^{o}_{\{ad\}}}) =
\bordermatrix{
~ & a & b & c & d \cr
a & 1 & 0 & 0 & -1 \cr
b & 0 & 1 & 0 & -1 \cr
c & 0 & 0 & 1 & -1 \cr
d & 0 & 0 & 0 & -1
}
\]
yields
\[
\scalebox{0.9}{
{\let\quad\thinspace
\bordermatrix{
~ &
\phi_C(a) & \phi_C(b) & \phi_C(c) & \phi_C(d) &
\chi_C(a) & \chi_C(b) & \chi_C(c) & \chi_C(d) &
\psi_C(a) & \psi_C(b) & \psi_C(c) & \psi_C(d)\cr
a & 1 & 0 & 0 & 0 & 0 & 0 & 1 & -1 & -1 & -2 & -1 & -1\cr
b & 0 & 1 & 0 & 0 & 0 & 0 & 1 & -1 & 0 & -1 & -1 & -1\cr
c & 0 & 0 & 1 & 0 & -1 & -1 & 0 & -1 & -1 & -1 & -1 & -1\cr
d & 0 & 0 & 0 & -1 & -1 & -1 & -1 & 0 & -1 & -1 & -1 & -1
}}}.
\]
To obtain $IAS_{\Gamma^{o}_{\{ad\}}}(C)$ multiply every column containing a nonzero entry in the last row by $-1$.

For later comparison we give now the circuits of cardinality 3 of $M_{\mathbb{R}}[IAS_{\Gamma^{o}_{\{ad\}}}(C)]$, in a compressed way for readability:
\begin{align*}
\phi_C(a)\chi_C(a)\psi_C(a),& & \phi_C(a)\chi_C(b)\psi_C(a),& & \phi_C(d)\chi_C(d)\psi_C(c),& & \phi_C(d)\chi_C(d)\psi_C(d),\\
\phi_C(a)\psi_C(b)\psi_C(c),& & \phi_C(a)\psi_C(b)\psi_C(d),& & \phi_C(b)\psi_C(a)\psi_C(c),& & \phi_C(b)\psi_C(a)\psi_C(d), \\
\phi_C(c)\phi_C(d)\chi_C(a),& & \phi_C(c)\phi_C(d)\chi_C(b),& & \chi_C(a)\chi_C(c)\chi_C(d),& & \chi_C(b)\chi_C(c)\chi_C(d).
\end{align*}

The last eight of these listed 3-circuits are transverse circuits; as discussed in~\cite{BT2}, they correspond to the eight 3-cycles in $F$. (Each set of three vertices in $F$ appears on two 3-cycles.) Notice that each of $\chi_C(a), \chi_C(b), \psi_C(c), \psi_C(d)$ appears in precisely three 3-circuits.

\begin{figure}[ptb]
\centering
\includegraphics[scale=0.8]{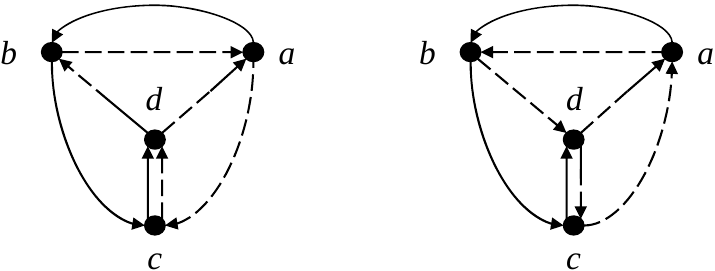}
\caption{Two oriented Euler circuits in a 4-regular graph.}
\label{fourfig2a}
\end{figure}

If we use one of the $cd$ edges as the base edge instead of $ad$, then we obtain signed Euler circuits $C = d^{-}a^{-}b^{-}c^{-}d^{+}b^{+}a^{+}c^{+}$
and $\widetilde{C} = d^{-}c^{-}b^{-}a^{-}d^{+}b^{+}a^{+}c^{+}$. The resulting matrices are given below.
\[
IAS_{\Gamma^{o}_{\{cd\}}}(C)=
\scalebox{0.9}{
{\let\quad\thinspace
\bordermatrix{
~ &
\phi_C(a) & \phi_C(b) & \phi_C(c) & \phi_C(d) &
\chi_C(a) & \chi_C(b) & \chi_C(c) & \chi_C(d) &
\psi_C(a) & \psi_C(b) & \psi_C(c) & \psi_C(d)\cr
a & 1 & 0 & 0 & 0 & 0 & 0 & -1 & 1 & 1 & 2 & 1 & 1\cr
b & 0 & 1 & 0 & 0 & 0 & 0 & -1 & 1 & 0 & 1 & 1 & 1\cr
c & 0 & 0 & 1 & 0 & 1 & 1 & 0 & 1 & 1 & 1 & 1 & 1\cr
d & 0 & 0 & 0 & 1 & -1 & -1 & -1 & 0 & 1 & 1 & 1 & 1
}}}
\]
\[
IAS_{\widetilde{\Gamma}^{o}_{\{cd\}}}(\widetilde{C})=
\scalebox{0.9}{
{\let\quad\thinspace
\bordermatrix{
~ &
\phi_{\widetilde{C}}(a) & \phi_{\widetilde{C}}(b) & \phi_{\widetilde{C}}(c) & \phi_{\widetilde{C}}(d) &
\chi_{\widetilde{C}}(a) & \chi_{\widetilde{C}}(b) & \chi_{\widetilde{C}}(c) & \chi_{\widetilde{C}}(d) &
\psi_{\widetilde{C}}(a) & \psi_{\widetilde{C}}(b) & \psi_{\widetilde{C}}(c) & \psi_{\widetilde{C}}(d)\cr
a & 1 & 0 & 0 & 0 & 0 & 1 & 0 & 1 & 1 & 1 & 0 & 1\cr
b & 0 & 1 & 0 & 0 & -1 & 0 & 0 & 1 & 1 & 1 & 0 & 1\cr
c & 0 & 0 & 1 & 0 & 0 & 0 & 0 & 1 & 2 & 2 & 1 & 1\cr
d & 0 & 0 & 0 & 1 & -1 & -1 & -1 & 0 & 1 & 1 & 1 & 1
}}}
\]

We have $M_{\mathbb{R},\Gamma^{o}_{\{cd\}}}(C,C*d,D_{\widetilde{\Gamma}^{o}_{\{cd\}}}) = M_{\mathbb{R},\Gamma^{o}_{\{ad\}}}(C,C*d,D_{\widetilde{\Gamma}^{o}_{\{ad\}}})$ and $IAS_{\Gamma^{o}_{\{cd\}}}(C)$ is obtained from $M_{\mathbb{R},\Gamma^{o}_{\{cd\}}}(C,C*d,D_{\widetilde{\Gamma}^{o}_{\{cd\}}}) \cdot IAS_{\widetilde{\Gamma}^{o}_{\{cd\}}}(\widetilde{C})$ by multiplying by $-1$ the columns indexed by $\psi_{\widetilde{C}}(d) = \phi_C(d)$ and $\phi_{\widetilde{C}}(d) = \psi_{C}(d)$ (recall that in the case of base edge $\{ad\}$ we needed to multiply eight columns by $-1$).

We also verify that the matroids $M_{\mathbb{R}}[IAS_{\Gamma^{o}_{\{cd\}}}(C)]$ and $M_{\mathbb{R}}[IAS_{\Gamma^{o}_{\{ad\}}}(C)]$ are not isomorphic. Indeed, the 3-circuits of $M_{\mathbb{R}}[IAS_{\Gamma^{o}_{\{cd\}}}(C)]$ are:
\begin{align*}
\phi_C(a)\psi_C(b)\psi_C(c),& &\phi_C(a)\psi_C(b)\psi_C(d),& & \phi_C(b)\psi_C(a)\psi_C(c),& &\phi_C(b)\psi_C(a)\psi_C(d), \\ \phi_C(c)\phi_C(d)\chi_C(a),& &\phi_C(c)\phi_C(d)\chi_C(b),& & \phi_C(c)\chi_C(c)\psi_C(c),& &\phi_C(c)\chi_C(c)\psi_C(d), \\ \phi_C(d)\chi_C(d)\psi_C(c),& &\phi_C(d)\chi_C(d)\psi_C(d),& & \chi_C(a)\chi_C(c)\chi_C(d),& &\chi_C(b)\chi_C(c)\chi_C(d).
\end{align*}

Unlike $M_{\mathbb{R}}[IAS_{\Gamma^{o}_{\{ad\}}}(C)]$, this matroid has no element that appears in precisely three 3-circuits (in fact, each element appears in an even number of 3-circuits).

\section{Naji's Theorem}
\label{sec:naji}
In this section we discuss a characterization of circle graphs discovered by Naji~\cite{N1, N}.

\begin{definition}
\label{najieq} Let $G$ be a simple graph. For each pair of
distinct vertices $v$ and $w$ of $G$, let $\beta(v,w)$ and $\beta(w,v)$ be
distinct variables. Then the \emph{Naji equations} for $G$ are the following.
\begin{enumerate}
\item If $vw$ is an edge of $G$ then $\beta(v,w)+\beta(w,v)=1$.

\item If $v,w,x$ are three distinct vertices of $G$ such that $vw$ is an edge of $G$ and $vx,wx$ are not edges of $G$, then $\beta(x,v)+\beta(x,w)=0$.

\item If $v,w,x$ are three distinct vertices of $G$ such that $vw,vx$ are edges of $G$ and $wx$ is not an edge of $G$, then $\beta(v,w)+\beta(v,x)+\beta(w,x)+\beta(x,w)=1$.
\end{enumerate}
\end{definition}

\begin{theorem}(\cite{GeelenNaji, N1, N, Tn}) $G$ is a circle graph if and only if the Naji equations of $G$ have a solution over $GF(2)$.
\end{theorem}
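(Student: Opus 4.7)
The plan is to derive Naji's theorem from Theorem~\ref{main} by exhibiting an explicit bijection between solutions of the Naji equations over $GF(2)$ and sign patterns that appear in the integer matrix $IAS_{\Gamma^{o}_E}(C)$ of Definition~\ref{bigmatrix}. Since Theorem~\ref{main} equates being a circle graph with $\mathcal{Z}_3(G)$ being representable over $\mathbb{Q}$, it suffices to show that the Naji equations over $GF(2)$ are solvable if and only if such a representation exists.

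For the forward direction, suppose $G=\mathcal{I}(C)$ for some Euler system $C$ of a 4-regular graph $F$. Fix a base set $E$ and consistent orientations, obtaining $IAS_{\Gamma^{o}_E}(C) = (I,A,B)$ as an integer matrix. Define $\beta(v,w) \in GF(2)$ by
\[
\beta(v,w)=\begin{cases} (1-A_{vw})/2 & \text{if } vw \in E(G),\\ B_{vw}/2 & \text{if } v\neq w \text{ and } vw \notin E(G).\end{cases}
\]
Equivalently, reading the double occurrence word off from $C$, $\beta(v,w)=1$ iff either the first occurrence of $v$ precedes the first occurrence of $w$ (for interlaced pairs) or both occurrences of $w$ lie strictly between the two occurrences of $v$ (for non-interlaced pairs). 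I would then verify the three Naji equations by direct case analysis on the relative positions of the occurrences of $v,w,x$: equation~1 reflects the antisymmetry of the signs in $A$; equation~2 reflects that if $x$ is not interlaced with either vertex of the interlaced pair $v,w$, then either both intervals $[v],[w]$ are nested inside $[x]$ or neither is; and equation~3 reflects an enumeration of the possible relative configurations of three pairwise nested/interlaced intervals.

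For the backward direction, suppose the Naji equations are solvable over $GF(2)$ and let $\beta$ be a solution. Construct an integer matrix $P=(I,A,B)$ by setting $A_{vw}=1-2\beta(v,w)\in\{-1,+1\}$ for each edge $vw$, $A_{vw}=0$ for non-edges, $B_{vv}=1$, $B_{vw}=1$ for edges $vw$, and $B_{vw}=2\beta(v,w)\in\{0,2\}$ for non-adjacent $v\neq w$. This matrix satisfies the shape requirements of Lemma~\ref{standardthree} and reduces modulo $2$ to $IAS(G)$, so the matroid $M_{\mathbb{Q}}[P]$ agrees modulo $2$ with $M[IAS(G)]$. The plan is to check that the three Naji equations supply exactly the rank conditions that allow $M_{\mathbb{Q}}[P]$ to shelter $\mathcal{Z}_3(G)$ over $\mathbb{Q}$, and then invoke the implication $3\Rightarrow 1$ of Theorem~\ref{main} to conclude that $G$ is a circle graph.

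The main obstacle is the backward direction: showing that $P$ shelters $\mathcal{Z}_3(G)$ demands matching ranks of \emph{all} subtransversals of $W(G)$, not merely those supported on two or three vertices. My plan is to induct on the size of the subtransversal, reducing rank computations on a large subtransversal to Gaussian elimination steps whose correctness at each stage is guaranteed by the three Naji equations applied to induced subgraphs on two or three of the relevant vertices. Equation~3, involving four $\beta$-values on an induced $P_3$, is expected to drive the nontrivial inductive cases, so tracking which Naji equation justifies which row or column operation will be the primary technical challenge.
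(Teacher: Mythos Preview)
The paper does not prove Naji's theorem; it is quoted as a known result from \cite{N1,N,Tn}, and the only related argument in the paper is Proposition~\ref{najisol}, which shows that an $IAS_{\Gamma^o}(C)$ matrix yields a Naji solution (the forward direction only). Your forward direction is exactly this proposition, so that half is fine and matches the paper.

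The backward direction, however, has a genuine gap. To show that your matrix $P=(I,A,B)$ shelters $\mathcal{Z}_3(G)$ over $\mathbb{Q}$ you must verify, for every transversal $T$, that $\det P[T]=0$ whenever $\det IAS(G)[T]=0$ over $GF(2)$; equivalently, that every even integer determinant $\det P[T]$ is actually zero. (Reduction mod~2 already gives the other inequality on ranks.) This is a global condition on all $n\times n$ transversal submatrices, whereas the three Naji equations constrain configurations of at most three vertices. Your inductive plan---reduce large-subtransversal rank computations to Gaussian elimination steps justified by Naji equations on two or three vertices---does not explain how three-vertex identities in $GF(2)$ can force an integer $n\times n$ determinant to vanish rather than merely be even. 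In the paper the analogous determinant control (Corollary~\ref{unimodular}) is obtained not from local identities but from the naturality properties of Theorem~\ref{sameops}, which rely on the matrix actually coming from an Euler system; nothing like this is available for an arbitrary Naji solution $\beta$.

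In effect, showing that a Naji solution produces a $\mathbb{Q}$-sheltering matrix \emph{is} the hard direction of Naji's theorem, so your proposal does not derive Naji's theorem from Theorem~\ref{main}; it relocates the difficulty to the unproved claim that $P$ shelters $\mathcal{Z}_3(G)$. Absent a concrete mechanism for the determinant control, the argument does not go through.
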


It turns out that the $IAS_{\Gamma^{o}}(C)$ matrices of Section~\ref{sec:easy} provide solutions to the Naji equations for the interlacement graph $\mathcal{I}(C)$.

\begin{proposition}
\label{najisol}
Let $C$ be an Euler system of a 4-regular graph $F$, and $\Gamma^{o}$ a set of oriented induced circuits of $C$. Let $IAS_{\Gamma^{o}}(C)=
\begin{pmatrix}
I & A & B
\end{pmatrix}$ be a corresponding matrix defined as in Section~\ref{sec:easy}. For $v \neq w \in V(F)$ define $\beta(v,w) \in GF(2)$ as follows.
\begin{gather*}
\beta(v,w)=
\begin{cases}
0\text{,} & \text{if }vw \in E(\mathcal{I}(C)) \text{ and }A_{vw}=1\\
1\text{,} & \text{if }vw \in E(\mathcal{I}(C)) \text{ and }A_{vw}=-1\\
0\text{,} & \text{if }vw \notin E(\mathcal{I}(C)) \text{ and }B_{vw}=0\\
1\text{,} & \text{if }vw \notin E(\mathcal{I}(C)) \text{ and }B_{vw}=2
\end{cases}
\end{gather*}
Then $\beta$ is a Naji solution for $\mathcal{I}(C)$.
\end{proposition}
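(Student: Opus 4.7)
The plan is to unpack the definition of $\beta$ in terms of positions of vertex passages on the Euler circuit, and then verify each of the three Naji equations by a combinatorial argument on cyclic orders. As a first step, Definitions~\ref{interlacement} and~\ref{bigmatrix} together yield the following characterization: $vw \in E(\mathcal{I}(C))$ if and only if $C_{\Gamma^{o}}(v)$ includes exactly one of the passages $w^-, w^+$, in which case $A_{vw} \in \{-1,1\}$ and $B_{vw}=1$; otherwise $A_{vw}=0$ and $B_{vw} \in \{0,2\}$. Thus $\beta(v,w)$ is defined on every ordered pair, and $\beta(v,w)=1$ iff either (edge case) $C_{\Gamma^{o}}(v)$ contains $w^-$ rather than $w^+$, or (non-edge case) $C_{\Gamma^{o}}(v)$ contains both $w$-passages.

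To prove equation~1, interlacement of $v$ and $w$ means the four passages $v^-,v^+,w^-,w^+$ alternate on $C$, and the orientation convention $\ldots,v^-,C_{\Gamma^{o}}(v),v^+,\ldots$ permits only two cyclic orders. In one, $C_{\Gamma^{o}}(v)$ contains $w^-$ and $C_{\Gamma^{o}}(w)$ contains $v^+$, giving $\beta(v,w)=1$ and $\beta(w,v)=0$; in the other the roles reverse. Either way $\beta(v,w)+\beta(w,v)=1$.

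For equation~2, the non-edge hypotheses say $C_{\Gamma^{o}}(x)$ contains both or neither of $v^\pm$, and similarly for $w^\pm$; the interlacement of $v$ and $w$ places one $w$-passage inside $C_{\Gamma^{o}}(v)$ and the other on the complementary arc of $C$. Since $C_{\Gamma^{o}}(x)$ is an arc of $C$, if it contains both $v^-$ and $v^+$ then it contains the ``short'' arc between them and hence one of the $w$-passages; non-interlacement of $w$ and $x$ then forces the other $w$-passage to lie in $C_{\Gamma^{o}}(x)$ as well. A symmetric argument handles the case where $C_{\Gamma^{o}}(x)$ contains neither $v^\pm$. In either case $\beta(x,v)=\beta(x,w)$, so their sum is $0$ in $GF(2)$.

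For equation~3 I carry out a finite case analysis. Fixing $v^-$ before $v^+$ and applying the three interlacement constraints (alternation of $v^\pm$ with $w^\pm$, alternation of $v^\pm$ with $x^\pm$, non-alternation of $w^\pm$ with $x^\pm$), the cyclic order of the six passages must be either $v^-,w^?,x^?,v^+,x^?,w^?$ or its image $v^-,x^?,w^?,v^+,w^?,x^?$ under the swap $w\leftrightarrow x$; the four sign choices for the $w$- and $x$-passages give four subcases per pattern. In each subcase one reads $\beta(v,w),\beta(v,x),\beta(w,x),\beta(x,w)$ off the positions (using the characterization of $\beta$ from the first paragraph) and checks directly that their sum is odd. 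The main obstacle is simply the bookkeeping for equation~3, but fixing the orientation of $v$ and exploiting the symmetry $w\leftrightarrow x$ together with the orientation-reversing symmetry of $C$ reduces the work to essentially one configuration.
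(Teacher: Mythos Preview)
Your proposal is correct and follows essentially the same approach as the paper: the paper's own proof simply calls the verification ``routine'' and refers to \cite[Proposition~4]{Tn} for the details, and what you have outlined is precisely that routine verification --- translate $\beta(v,w)$ into a statement about which passages $w^{\pm}$ lie on the arc $C_{\Gamma^{o}}(v)$, and then check each Naji equation by examining the possible cyclic orders of the relevant passages on $C$. One small wording issue: in your argument for equation~2 the phrase ``short arc'' is misleading (there is no metric); what you actually use is that an arc of $C$ containing both $v^{-}$ and $v^{+}$ must contain one of the two sub-arcs between them, and interlacement of $v$ and $w$ puts a $w$-passage on each such sub-arc.
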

\begin{proof}
The verification is routine. For details see~\cite[Proposition 4]{Tn}, where the notation $v^{+}=v^{in},v^{-}=v^{out}$ is used.
\end{proof}

The solution described in Proposition~\ref{najisol} cannot be extracted from the $GF(2)$ reduction of $IAS_{\Gamma^{o}}(C)$, because $-1 \equiv 1$ and $0 \equiv 2$ (mod 2). We come to the rather curious conclusion that even though the Naji equations are defined over $GF(2)$, they are connected to representations of $\mathcal{Z}_{3}(G)$ over fields of characteristic other than 2.

\section{Characterizations of circle graphs and planar matroids in terms of multimatroids}
\label{sec:other}
In this section we formulate a more detailed form of Theorem~\ref{main} in terms of multimatroids.

\subsection{Multimatroids}

First we recall some basic notions regarding multimatroids from \cite{B1,RB1}.

The power set of a set $X$ is denoted by $2^X$. Let $\Omega$ be a partition of a set $U$. A \emph{transversal} $T$ of $\Omega$ is a subset of $U$ such that $|T \cap \omega| = 1$ for all $\omega \in \Omega$. The set of transversals of $\Omega$ is denoted by $\mathcal{T}(\Omega)$. A \emph{subtransversal} of $\Omega$ is a subset of a transversal of $\Omega$. The set of subtransversals of $\Omega$ is denoted by $\mathcal{S}(\Omega)$.

In order to efficiently define the notion of a multimatroid, we first recall the notion of a semi-multimatroid.
\begin{definition} \label{def:semi-multimatroid}
A \emph{semi-multimatroid} $Z$ (described by its circuits) is a triple $(U,\Omega,\mathcal{C})$, where $\Omega$ is a partition of a finite set $U$ and $\mathcal{C} \subseteq \mathcal{S}(\Omega)$ such that for each $T \in \mathcal{T}(\Omega)$, $(T,\mathcal{C}\cap 2^T)$ is a matroid (described by its circuits).
\end{definition}
For a semi-multimatroid $Z = (U,\Omega,\mathcal{C})$, we denote $U(Z) := U$, $\Omega(Z) := \Omega$, and $\mathcal{C}(Z) := \mathcal{C}$. If $Z$ is clear from the context, then we just write $U$, $\Omega$ and $\mathcal{C}$ to denote $U(Z)$, $\Omega(Z)$, and $\mathcal{C}(Z)$, respectively. We say that $Z$ is a semi-multimatroid \emph{on} $(U,\Omega)$.

The elements of $\Omega$ are called the \emph{skew classes} of $Z$, the elements of $\mathcal{C}$ are called the \emph{circuits} of $Z$, and $U$ is called the \emph{ground set} of $Z$. We say that $I \in \mathcal{S}(\Omega)$ is an \emph{independent set} of $Z$ is no subset of $I$ is a circuit and we say that $B \in \mathcal{S}(\Omega)$ is a \emph{basis} of $Z$ if $B$ is an independent set, but no proper superset of $B$ is an independent set. The \emph{order} of $Z$ is $|\Omega(Z)|$.

For any $X \subseteq U$, the \emph{restriction} of $Z$ to $X$, denoted by $Z[X]$, is the semi-multimatroid $(X,\Omega',\mathcal{C}\cap 2^{X})$ with $\Omega' = \{ \omega \cap X \mid \omega \in \Omega\} \setminus \{\emptyset\}$. We also define $Z-X := Z[U\setminus X]$.

If all elements of $\Omega(Z)$ are singletons, then, by slight abuse of notation, we associate $Z$ with the matroid $(U(Z),\mathcal{C}(Z))$.

For $X \in \mathcal{S}(\Omega)$, the \emph{minor} of $Z$ induced by $X$ is the semi-multimatroid $Z|X := (U',\Omega',\mathcal{C}')$, where $\Omega' = \{\omega \in \Omega \mid \omega \cap X = \emptyset\}$, $U' = \bigcup \Omega'$, and for all $T' \in \mathcal{T}(\Omega')$, $(Z|X)[T'] = Z[T' \cup X] \slash X$, where $Z[T' \cup X]$ is regarded here as a matroid and $\slash X$ denotes, as usual, matroid contraction of $X$. Explicitly, $\mathcal{C}' = \{ C \in \mathcal{C}(Z[T' \cup X] \slash X) \mid T' \in \mathcal{T}(\Omega') \}$.

We remark that, unfortunately, the standard way to denote a multimatroid
minor (i.e., $Z|X$) as introduced in \cite{B2} clashes with the usual way to denote matroid restriction. To avoid confusion, we denote in this paper the restriction
of a matroid $M$ to a subset $X$ of its ground set by $M[X]$ (which is compatible with the notation of multimatroid restriction $Z[X]$).

Semi-multimatroids $Z$ and $Z'$ are called \emph{isomorphic} if there is a one-to-one correspondence between $U(Z)$ and $U(Z')$ respecting $\Omega$ and $\mathcal{C}$.

For a semi-multimatroid $Z$ and matroid $M$ with ground set $U(Z)$, we say that $Z$ is \emph{sheltered} by $M$ if $Z[T]=M[T]$ for all $T \in \mathcal{T}(\Omega(Z))$. We say that a matrix $A$ \emph{represents} (or, is a \emph{representation} of) a semi-multimatroid $Z$ if $A$ represents a matroid $M$ that shelters $Z$ \cite{BT1}. We say that a semi-multimatroid $Z$ is \emph{representable} over some field $\mathbb{F}$ if there is a matrix $A$ over $\mathbb{F}$ that represents $Z$. If $Z$ is representable over $\mathbb{F}$, then so is every minor of $Z$. A semi-multimatroid is called \emph{regular} if it is representable over all fields. We say that $A$ is a \emph{strict} representation of $Z$ if $A$ has at most $|\Omega(Z)|$ rows. This terminology is consistent with \cite{BT1}, where it is said that $M$ is a strict sheltering matroid if $r(M) \leq |\Omega(Z)|$. We note that this terminology is also consistent with the notion of strict for sheltering matroids of $\mathcal{Z}_{2}(G)$ or $\mathcal{Z}_{3}(G)$ as defined in Section~\ref{sec:reps}, since both $\mathcal{Z}_{2}(G)$ and $\mathcal{Z}_{3}(G)$ have bases that are transversals and so $r(M) < |\Omega(Z)|$ is impossible for sheltering matroids $M$ of $Z$ when $Z$ is equal to $\mathcal{Z}_{2}(G)$ or $\mathcal{Z}_{3}(G)$.

\begin{definition} \label{def:multimatroid}
A semi-multimatroid $Z$ is called a \emph{multimatroid} if every minor of $Z$ of order $1$ has at most one circuit. A multimatroid is called \emph{tight} if every minor of $Z$ of order $1$ has exactly one circuit.
\end{definition}
For instance, suppose $Z$ is a semi-multimatroid of order 1, i.e., $Z$ contains a single skew class. Thus $Z$ is of the form $(\omega,\{\omega\},\mathcal{C})$ where $\mathcal{C}$ is a set of singletons. Then by Definition~\ref{def:multimatroid}, $Z$ is a multimatroid if and only if $|\mathcal{C}| \leq 1$, and $Z$ is a tight multimatroid if and only if $|\mathcal{C}| = 1$.

We remark that we use here the slightly more liberal notion of tightness from \cite{RB1}, while the notion of tightness from \cite{B1} additionally requires that no element of $\Omega(Z)$ is a singleton.

By definition, both multimatroids and tight multimatroids are closed under taking minors. Also, if $Z$ is a multimatroid and $X \subseteq U(Z)$, then so is $Z[X]$. However, tight multimatroids are in general not closed under taking restrictions.

If $Z$ is a multimatroid where no element of $\Omega(Z)$ is a singleton, then all bases of $Z$ are transversals \cite[Proposition~5.5]{B1}. If all elements of $\Omega(Z)$ for some multimatroid $Z$ are of cardinality $q$, then $Z$ is called a \emph{$q$-matroid}.

It is shown in \cite{BT1} that for every looped simple graph $G$, $\mathcal{Z}_3(G)$ is a tight 3-matroid representable over $GF(2)$ and, conversely, every tight 3-matroid representable over $GF(2)$ is isomorphic to $\mathcal{Z}_3(G)$ for some looped simple graph.

\subsection{Main result}

We recall the following known result.
\begin{lemma} [\cite{Tnewnew}] \label{lem:vertex_minor_mm_minor}
Let $G$ be a looped simple graph. If $H$ is a vertex-minor of $G$, then $\mathcal{Z}_{3}(H)$ is isomorphic to a minor of $\mathcal{Z}_{3}(G)$. Conversely, if $Z$ is a minor of $\mathcal{Z}_{3}(G)$, then $Z$ is isomorphic to $\mathcal{Z}_{3}(H)$ for some vertex-minor $H$ of $G$.
\end{lemma}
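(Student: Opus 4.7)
The plan rests on two basic observations. First, for any $v \in V(G)$, deleting the elements $\chi_G(v)$ and $\psi_G(v)$ and contracting $\phi_G(v)$ in the sheltering matroid $M[IAS(G)]$ produces exactly $M[IAS(G-v)]$; reading this off at the level of subtransversals gives the identity $\mathcal{Z}_3(G) \mid \{\phi_G(v)\} = \mathcal{Z}_3(G-v)$. This is essentially the construction already used in the proof of Lemma~\ref{vminor}. Second, for any of the three graph operations $G \mapsto G_{\ell}^v,\; G_s^v,\; G_{ns}^v$, the associated isomorphism $\mathcal{Z}_3(G) \cong \mathcal{Z}_3(H)$ of Lemma~\ref{localeq} (see \cite{Tnewnew}) maps each skew class $\{\phi_G(u),\chi_G(u),\psi_G(u)\}$ onto $\{\phi_H(u),\chi_H(u),\psi_H(u)\}$, but it may permute the labels inside. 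At the distinguished vertex $v$, these three operations induce the three transpositions of $\{\phi(v),\chi(v),\psi(v)\}$, and hence, by composition, an arbitrary element of the symmetric group $S_3$; moreover, the $S_3$-actions at distinct vertices can be realized independently.

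For the forward direction I would let $H$ be a vertex-minor of $G$, so $H = G'-X$ for some $G'$ locally equivalent to $G$ and some $X \subseteq V(G')$. Lemma~\ref{localeq} gives $\mathcal{Z}_3(G') \cong \mathcal{Z}_3(G)$, and iterating the first observation one vertex of $X$ at a time yields
\[
\mathcal{Z}_3(G'-X) \;=\; \mathcal{Z}_3(G') \,\Big|\, \{\phi_{G'}(v) \mid v \in X\},
\]
which is a minor of $\mathcal{Z}_3(G')$ and hence, through the isomorphism, of $\mathcal{Z}_3(G)$.

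For the converse direction I would let $Z = \mathcal{Z}_3(G) \mid Y$ with $Y \in \mathcal{S}(G)$, and let $X$ be the set of vertices of $G$ ``touched'' by $Y$ (that is, the unique $X \subseteq V(G)$ with $|Y \cap \{\phi_G(v),\chi_G(v),\psi_G(v)\}| = 1$ for $v \in X$ and $=0$ otherwise). Using the second observation at each $v \in X$, I would choose a sequence of local and loop complementations transforming $G$ into some $G' \sim G$ so that the induced isomorphism $\mathcal{Z}_3(G) \cong \mathcal{Z}_3(G')$ carries $Y$ bijectively onto the $\phi$-subtransversal $\{\phi_{G'}(v) \mid v \in X\}$. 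Then
\[
Z \;\cong\; \mathcal{Z}_3(G') \,\Big|\, \{\phi_{G'}(v) \mid v \in X\} \;=\; \mathcal{Z}_3(G'-X),
\]
and $H := G'-X$ is the required vertex-minor of $G$.

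The main obstacle is the second of the two observations above: verifying that the three basic local-equivalence operations realize the three transpositions of $\{\phi,\chi,\psi\}$ at the distinguished vertex. This is not a new result (it is implicit in~\cite{Tnewnew}, where the isomorphism of Lemma~\ref{localeq} is established), but applying it here requires keeping track of which label goes to which under the explicit row reductions identifying $M[IAS(G)]$ with $M[IAS(G_\ell^v)]$, $M[IAS(G_s^v)]$ and $M[IAS(G_{ns}^v)]$. Once this bookkeeping is in place, both directions follow as sketched, and the lemma is proved.
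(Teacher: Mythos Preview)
Your proposal is correct and follows essentially the same route as the paper, which proves both directions by invoking the results of \cite{Tnewnew} (Theorem~2, Proposition~35, and Corollary~38); your two ``observations'' are precisely the content of those cited results, so you are in effect unpacking what the paper only cites. One small caveat: the phrase ``the $S_3$-actions at distinct vertices can be realized independently'' is a bit loose, since local complementation at $v$ does have side effects on the $\chi/\psi$ labels at neighbours of $v$---but those side effects always fix $\phi(w)$ for $w\neq v$, so processing the vertices of $X$ one at a time still carries $Y$ onto a $\phi$-subtransversal, which is all your argument needs.
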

\begin{proof}
Let $H$ be a vertex-minor of $G$. Then there is a graph $G'$ locally equivalent to $G$ with $G'-X = H$ for some $X \subseteq V(G')$. By \cite[Theorem~2]{Tnewnew}, $\mathcal{Z}_{3}(G')$ is isomorphic to $\mathcal{Z}_{3}(G)$. By \cite[Proposition~35]{Tnewnew}, $\mathcal{Z}_{3}(H)$ is equal to a minor of $\mathcal{Z}_{3}(G')$.

Let $Z$ be a minor of $\mathcal{Z}_{3}(G)$. Then by \cite[Proposition~35 and Corollary~38]{Tnewnew}, $Z$ is isomorphic to $\mathcal{Z}_{3}(H)$ for some vertex-minor $H$ of $G$.
\end{proof}

Let $\Omega$ be a set of three disjoint circuits of the matroid $AG(2,3)$, the rank-$3$ affine geometry over $GF(3)$. Then $AG(2,3)$ shelters a tight $3$-matroid $\mathcal{H}_{3,3}$ with $\Omega(\mathcal{H}_{3,3}) = \Omega$, see \cite{RB1}. Explicitly, we have $\Omega(\mathcal{H}_{3,3}) = \{\omega_a,\omega_b,\omega_c\}$, where $\omega_x = \{x_1,x_2,x_3\}$ for $x \in \{a,b,c\}$ and
\begin{align*}
\mathcal{C}(\mathcal{H}_{3,3}) = \{&
\{a_1,b_1,c_1\}, \{a_2,b_2,c_2\}, \{a_3,b_3,c_3\}, \cr
& \{a_1,b_3,c_2\}, \{a_2,b_1,c_3\}, \{a_3,b_2,c_1\}, \cr
& \{a_1,b_2,c_3\}, \{a_2,b_3,c_1\}, \{a_3,b_1,c_2\}
\}.
\end{align*}

We say that a strict representation $A$ of $Z$ over $\mathbb{R}$ is \emph{transversely unimodular} if for each transversal $T$ of $Z$, the square matrix obtained from $A$ has determinant $0$, $-1$, or $1$.

We are now ready to prove a detailed form of Theorem~\ref{main} in terms of multimatroids.
\begin{theorem} \label{thm:circle_graph_mm_reg}
Let $Z$ be a tight $3$-matroid. Then the following conditions are equivalent.
\begin{enumerate}
\item \label{item:transv_unim} $Z$ has a transversely unimodular strict representation with only integer entries.

\item \label{item:regular} $Z$ is regular (i.e., representable over all fields).

\item \label{item:repr_GF2_diff} $Z$ is representable over $GF(2)$ and over some field of characteristic different from $2$.

\item \label{item:min_T_reg} For all transversals $T$ of $Z$, the 2-matroid $Z-T$ is representable over $GF(2)$ and over some field of characteristic different from $2$.

\item \label{item:excl_minor} $Z$ does not have a minor isomorphic to $\mathcal{H}_{3,3}$, $\mathcal{Z}_{3}(W_5)$, $\mathcal{Z}_{3}(W_7)$, or $\mathcal{Z}_{3}(BW_3)$.

\item \label{item:isom_circle_graph} $Z$ is isomorphic to $\mathcal{Z}_{3}(G)$ for some circle graph $G$.

\end{enumerate}
\end{theorem}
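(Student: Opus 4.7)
My plan is to prove the six equivalences by establishing a cycle of implications $(6) \Rightarrow (1) \Rightarrow (2) \Rightarrow (3) \Rightarrow (4) \Rightarrow (5) \Rightarrow (6)$, with the first four steps being the routine direction and the last two carrying the real content.

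For $(6) \Rightarrow (1)$, given $Z \cong \mathcal{Z}_3(G)$ with $G = \mathcal{I}(C)$ for an oriented Euler system $C$ of a $4$-regular graph $F$, I fix a set $E$ of base edges and a based, consistently oriented family $\Gamma^{o}_E$ of fundamental circuits of $C$, and take the integer matrix $IAS_{\Gamma^{o}_E}(C)$ of Definition~\ref{bigmatrix}. This is a strict sheltering matroid for $Z$ by Corollary~\ref{bigone}, and by Proposition~\ref{standardmg} each of its transversal-indexed square submatrices is $M_{\mathbb{R},\Gamma^{o}_E}(C,P,D_{\Gamma^{o}_E})$ for the corresponding circuit partition $P$, whose determinant lies in $\{-1,0,1\}$ by Corollary~\ref{unimodular}; hence $IAS_{\Gamma^{o}_E}(C)$ is transversely unimodular. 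For $(1) \Rightarrow (2)$, let $A$ be such a matrix; since $Z$ is tight of order $|\Omega(Z)|$, every basis of $Z$ is a transversal, and a transversal $T$ is a basis iff $A|_T$ is invertible over $\mathbb{R}$. Transverse unimodularity forces $\det A|_T \in \{-1,0,1\}$, a condition that is nonzero over $\mathbb{R}$ iff it is nonzero over any field $\mathbb{F}$, so $A$ viewed over $\mathbb{F}$ has the same basis transversals, and therefore the same rank function on subtransversals, as $Z$. Thus $A$ is a strict representation of $Z$ over every $\mathbb{F}$. The implication $(2) \Rightarrow (3)$ is trivial, and for $(3) \Rightarrow (4)$, if a matroid $M$ on $U(Z)$ shelters $Z$ over $\mathbb{F}$ then the matroid deletion $M - T$ shelters the restriction $Z - T$ (their rank functions agree on subsets of $U(Z) \setminus T$), applied both for $\mathbb{F} = GF(2)$ and for some $\mathbb{F}$ of characteristic $\neq 2$.

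For $(5) \Rightarrow (6)$ I invoke the multimatroid analogue of the excluded-minor characterization of binary matroids: a tight $3$-matroid is $GF(2)$-representable iff it has no minor isomorphic to $\mathcal{H}_{3,3}$ (see~\cite{B1, RB1}). Hence from (5) we get that $Z$ is $GF(2)$-representable, so by the characterization recalled in the paragraph after Definition~\ref{def:multimatroid}, $Z \cong \mathcal{Z}_3(G)$ for some looped simple graph $G$. Loop complementation at each looped vertex leaves $\mathcal{Z}_3(G)$ unchanged up to isomorphism (by~\cite{Tnewnew}), so I may take $G$ to be simple. Then by Lemma~\ref{lem:vertex_minor_mm_minor}, the absence of $\mathcal{Z}_3(W_5), \mathcal{Z}_3(W_7), \mathcal{Z}_3(BW_3)$ as minors of $Z$ is equivalent to the absence of $W_5, W_7, BW_3$ as vertex-minors of $G$, so Bouchet's Theorem~\ref{Bthm} forces $G$ to be a circle graph.

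The real work is in $(4) \Rightarrow (5)$, which I prove contrapositively. If $Z$ has $\mathcal{Z}_3(G_0)$ as a minor for some $G_0 \in \{W_5, W_7, BW_3\}$, pick the witnessing subtransversal $X$ with $Z|X \cong \mathcal{Z}_3(G_0)$, pick the $\psi$-transversal $T_X$ of $\mathcal{Z}_3(G_0)$ under this isomorphism, and set $T = X \cup T_X$. The claim is that $Z - T$ cannot be representable over any field $\mathbb{F}$ of characteristic $\neq 2$: from a supposed sheltering matroid $M$ of $Z - T$ over $\mathbb{F}$ one constructs a sheltering matroid for $(Z|X) - T_X \cong \mathcal{Z}_2(G_0)$ over $\mathbb{F}$, contradicting Proposition~\ref{obstruct}. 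The analogous argument for an $\mathcal{H}_{3,3}$ minor uses that $\mathcal{H}_{3,3}$ itself, being sheltered by $AG(2,3)$, contains (for a suitable transversal $T_0$) a restriction that is not $GF(2)$-representable because $AG(2,3)$ harbors a $U_{2,4}$-style obstruction to binarity.

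The principal obstacle is precisely the ``lifting'' argument used in the previous paragraph: the rank function of the restriction $Z - T$ on subsets of $U(Z|X) \setminus T_X$ is $r_Z$, whereas the rank function of $(Z|X) - T_X$ on the same set is $r_{Z|X}(\cdot) = r_Z(\cdot \cup X) - r_Z(X)$, and these differ in general. Bridging this gap requires producing an extension of the hypothetical $\mathbb{F}$-sheltering matroid $M$ of $Z - T$ to an $\mathbb{F}$-sheltering matroid of a larger multimatroid in which $X$ is realized as an independent set, so that contraction by $X$ recovers the minor $Z|X$; this parallels how Lemma~\ref{standardthree} reconstructs a standard-form representation with the $\phi_G$ columns playing the role of the to-be-contracted elements. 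Confirming this lifting construction, together with the invocation of the $\mathcal{H}_{3,3}$-excluded-minor characterization in $(5) \Rightarrow (6)$, is the technical heart of the proof.
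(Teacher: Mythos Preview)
Your cycle of implications matches the paper's, and your treatments of $(6)\Rightarrow(1)$, $(1)\Rightarrow(2)$, $(2)\Rightarrow(3)$, $(3)\Rightarrow(4)$, and $(5)\Rightarrow(6)$ are essentially the paper's arguments.

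The genuine gap is in $(4)\Rightarrow(5)$. By putting $X$ into the deleted transversal $T = X \cup T_X$, you remove exactly the elements you need to \emph{contract}, and this is what manufactures the rank-function mismatch you then try to repair via a ``lifting'' of sheltering matroids. That lifting is not carried out, and it is unnecessary. Instead, choose a transversal $T'$ of $Z$ that agrees with $T_X$ on the skew classes of $Z|X$ and is \emph{disjoint} from $X$ on the remaining skew classes (possible because each skew class has three elements and $X$ occupies at most one of them). Then minor and restriction commute: for every transversal $S$ of $(Z|X)-T_X$ one has
\[
((Z-T')|X)[S] \;=\; (Z-T')[S\cup X]/X \;=\; Z[S\cup X]/X \;=\; (Z|X)[S],
\]
so $(Z-T')|X = (Z|X)-T_X$. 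Since minors of $\mathbb{F}$-representable multimatroids are $\mathbb{F}$-representable, the non-representability of $(Z|X)-T_X \cong \mathcal{Z}_2(G_0)$ over every field of characteristic $\neq 2$ (Proposition~\ref{obstruct}) forces $Z-T'$ to be non-representable over every such field, contradicting~(4) directly. The paper handles this step with the same closure property (restrictions and minors preserve representability), though it phrases the contradiction in terms of $Z$ itself rather than a specific $Z-T'$.

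Your $\mathcal{H}_{3,3}$ sketch is also off: the obstruction to binarity is not a $U_{2,4}$ argument. The paper exhibits a transversal $T_0$ of $\mathcal{H}_{3,3}$ with $\mathcal{H}_{3,3}-T_0$ isomorphic to a $2$-matroid $S_1$ whose three circuits have symmetric difference equal to a circuit-free transversal; in any binary sheltering matroid the symmetric difference of circuits is a disjoint union of circuits, which would force a subtransversal circuit of $S_1$ not present in its circuit set. Hence $S_1$ is not $GF(2)$-representable, and the same commutation argument produces a transversal $T'$ of $Z$ with $Z-T'$ not $GF(2)$-representable.
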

\begin{proof}
Assume Condition~\ref{item:transv_unim} holds. Let $A$ be a transversely unimodular strict representation of $Z$ over $\mathbb{R}$ with only integer entries. Let $S$ be a subtransversal of $Z$. We show that the columns of $S$ in $A$ are independent when computed over $\mathbb{R}$ if and only if these columns are independent when computed over some field $\mathbb{F}$. The if direction follows directly from the fact that $A$ has only integer entries and the fact that $\mathbb{R}$ is of characteristic zero. Conversely, assume that the columns of $S$ in $A$ are independent when computed over $\mathbb{R}$. Then $S$ is an independent set of $Z$. Since all bases of a $3$-matroid are transversals, there is a basis $B$ of $Z$ that is a transversal and $S \subseteq B$. Since the square matrix obtained from $A$ by restricting to the columns of $B$ is unimodular, these columns of $B$ are also independent when computed over $\mathbb{F}$. Thus the columns of $I \subseteq B$ are independent when computing over $\mathbb{F}$.

Condition~\ref{item:regular} directly implies Condition~\ref{item:repr_GF2_diff}, which in turn directly implies Condition~\ref{item:min_T_reg}.

Assume Condition~\ref{item:min_T_reg} holds. Assume $Z$ has a minor $Z|X$ isomorphic to $\mathcal{Z}_{3}(G)$ for some $G \in \{W_5,BW_3,W_7\}$. By Proposition~\ref{obstruct}, there is a transversal $T$ of $Z|X$ such that $(Z|X)-T$ is not representable over any field of characteristic different from $2$. Thus $Z|X$, and therefore also $Z$, are not representable over any field of characteristic different from $2$ --- a contradiction.

Now assume $Z$ has a minor $Z|X$ isomorphic to $\mathcal{H}_{3,3}$. In \cite{RB1} it is shown that there is a transversal $T$ of $\mathcal{H}_{3,3}$, such that $\mathcal{H}_{3,3}-T$ is isomorphic to the 2-matroid $S_1 = (U,\Omega,\mathcal{C})$ with $\Omega = \{ \{i_a, i_b\} \mid i \in \{1, 2, 3\} \}$, $U = \bigcup \Omega$, and $\mathcal{C} = \{ \{1_a,2_b,3_b\}, \{1_b,2_a,3_b\}, \{1_b,2_b,3_a\} \}$. Now, $S_1$ is not representable over $GF(2)$. Indeed, suppose to the contrary that $S_1$ is representable over $GF(2)$. Then the symmetric difference of the three circuits of $\mathcal{C}$, which is $\{1_a,2_a,3_a\}$, is the union of disjoint circuits of a binary matroid that shelters $S_1$. Since $\{1_a,2_a,3_a\}$ is a subtransversal, these circuits must be in $\mathcal{C}$. This is a contradiction. Consequently, $Z|X$, and therefore also $Z$, are not representable over $GF(2)$ --- a contradiction. We have thus obtained Condition~\ref{item:excl_minor}.

Assume that Condition~\ref{item:excl_minor} holds. In \cite{RB1} it is shown that if $Z$ does not have a minor isomorphic to $\mathcal{H}_{3,3}$, then $Z$ is representable over $GF(2)$. In \cite{BT1} it is shown that a tight 3-matroid representable over $GF(2)$ is isomorphic to $\mathcal{Z}_3(G)$ for some looped simple graph. By the definition of an isotropic matroid, we may assume that $G$ has no loops. By Theorem~\ref{Bthm} and Lemma~\ref{lem:vertex_minor_mm_minor}, $G$ is a circle graph. Thus Condition~\ref{item:isom_circle_graph} holds.

Condition~\ref{item:isom_circle_graph} implies Condition~\ref{item:transv_unim} by Corollaries~\ref{unimodular} and \ref{bigone}.
\end{proof}   

\begin{remark}
The equivalence of Conditions~\ref{item:min_T_reg} and \ref{item:isom_circle_graph} in Theorem~\ref{thm:circle_graph_mm_reg} is closely related to \cite[Theorem~3]{G} and \cite[Corollary~4.20]{GeelenPhD}, two positive solutions of Bouchet's conjecture that a graph is a circle graph if and only if every locally equivalent graph has a unimodular orientation \cite{Bu}. While Gasse's statement involves unimodular orientations and Geelen's statement involves delta-matroid representations, the result can also be formulated, see \cite[Theorem~25]{BT2}, in terms of multimatroids as follows: for some graph $G$, we have that $G$ is a circle graph if and only if for all transversals $T$ of $\mathcal{Z}_{3}(G)$, if the 2-matroid $\mathcal{Z}_{3}(G)-T$ is tight, then $\mathcal{Z}_{3}(G)-T$ has a transversely unimodular representation $(I \ A)$ over $\mathbb{R}$, where $A$ is skew-symmetric.
Since $\mathcal{Z}_{2}(W_5)$, $\mathcal{Z}_{2}(W_7)$, and $\mathcal{Z}_{2}(BW_3)$ are tight, the restriction to 2-matroids $\mathcal{Z}_{3}(G)-T$ that are tight is justified. The equivalence of Conditions~\ref{item:min_T_reg} and \ref{item:isom_circle_graph} in Theorem~\ref{thm:circle_graph_mm_reg} shows that the skew-symmetry restriction of $A$ can be replaced by the more liberal condition that $A$ has only integer entries (note that the skew-symmetry condition of $A$ along with transversal unimodularity of $(I \ A)$ implies that $A$ has only integer entries).

We might also mention a remark of Bouchet \cite[p.\ 285]{B4}, that if $F$ is a 4-regular graph and $C$ an Euler system, then the 2-matroid obtained from $\mathcal Z_3(\mathcal I (C)) - \{ \psi_{\mathcal I (C)}(v) \mid v \in V(F) \}$ is representable over arbitrary fields. (Bouchet used a different notion of multimatroid representability than we do, but the difference is not significant in the present discussion.) Bouchet did not extend this remark to $Z_3(\mathcal I (C)) - T$ for other transversals $T$ (or to $Z_3(\mathcal I (C))$ itself), or observe that the extended property characterizes circle graphs.
\end{remark}

\subsection{A characterization of planarity}

This subsection shows that the notion of regularity of tight 3-matroids generalizes the notion of planarity of matroids.

For a matroid $M$, denote by $E(M)$ the set of elements of $M$. For an injective function $\varphi$ on $E(M)$, denote by $\varphi(M)$ the matroid obtained from $M$ by renaming $E(M)$ according to $\varphi$.

\begin{definition} \label{def:free_sum}
Let $M$ be a matroid and let $\varphi_i$, with $i \in \{1,2\}$, be the function which sends every $e \in E(M)$ to $(e,i)$. Let $\Omega = \{ \{(e,1),(e,2)\} \mid e \in E(M) \}$ and $U = \bigcup \Omega$. Then the semi-multimatroid on $(U,\Omega)$ that is sheltered by the direct sum of the matroids $\varphi_1(M^*)$ and $\varphi_2(M)$ is denoted by $\mathcal{Z}_2(M)$.
\end{definition}
It turns out that for every matroid $M$, $\mathcal{Z}_2(M)$ is, in fact, a tight multimatroid \cite{B2}. We remark that $\mathcal{Z}_{2}(BW_3) = \mathcal{Z}_{2}(F)$, where $F$ is the Fano matroid, see, e.g., \cite{GeelenPhD}.

For every binary matroid $M$, there is a unique tight 3-matroid $\mathcal{Z}_{3}(M)$ on $(U,\Omega)$ with $\Omega = \{ \{(e,1),(e,2),(e,3)\} \mid e \in E(M) \}$ such that $\mathcal{Z}_{3}(M) - T = \mathcal{Z}_2(M)$, where $T = \{(e,3) \mid e \in E(M)\}$ \cite{BH3}. In fact, this holds even when $M$ is quaternary \cite{RB1}. The uniqueness of $\mathcal{Z}_{3}(M)$ follows from the following result.

\begin{proposition} [\cite{BH3}] \label{prop:unique_tight_k+1_mm}
Let $Z_1 = (U,\Omega,\mathcal{C}_1)$ and $Z_2 = (U,\Omega,\mathcal{C}_2)$ be tight multimatroids with $|\omega| \geq 3$ for all $\omega \in \Omega$. Let $T \in \mathcal{T}(\Omega)$. If $Z_1 - T = Z_2 - T$, then $Z_1 = Z_2$.
\end{proposition}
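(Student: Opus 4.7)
The plan is to show that $r_{Z_1}(Y) = r_{Z_2}(Y)$ for every $Y \in \mathcal{S}(\Omega)$; since a multimatroid is determined by its rank function, this forces $Z_1 = Z_2$. I argue by induction on $k := |Y \cap T|$. The base case $k = 0$ is immediate: then $Y \subseteq U \setminus T$ and $r_{Z_1}(Y) = r_{Z_1 - T}(Y) = r_{Z_2 - T}(Y) = r_{Z_2}(Y)$ by hypothesis.

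For the inductive step, fix $k \geq 1$, pick $t \in Y \cap T$ in some cell $\omega$, and write $Y = Y' \cup \{t\}$, so $|Y' \cap T| = k - 1$. Every subset of the subtransversal $Y'$ has at most $k - 1$ elements of $T$, so by the inductive hypothesis $r_{Z_1}$ and $r_{Z_2}$ coincide on all of them. Consequently $Z_1$ and $Z_2$ define the same matroid on $Y'$, so they share a common maximal independent subtransversal $I \subseteq Y'$. The matroid identity $\mathrm{cl}(Y') = \mathrm{cl}(I)$ reduces the step to proving $t \in \mathrm{cl}_{Z_1}(I) \iff t \in \mathrm{cl}_{Z_2}(I)$.

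Tightness supplies, for every maximal subtransversal $X$ of size $|\Omega| - 1$ missing $\omega$, a unique $\ell_Z(X) \in \omega$ with $r_Z(X \cup \{\ell_Z(X)\}) = r_Z(X)$. Closure monotonicity together with tightness yields the cell bound $|\omega \cap \mathrm{cl}_Z(J)| \leq 1$ for any subtransversal $J$ missing $\omega$. The key claim is: for an independent subtransversal $I$ missing $\omega$ and $t \in \omega$, $t \in \mathrm{cl}_Z(I)$ if and only if $\ell_Z(X) = t$ for every maximal subtransversal $X \supseteq I$ with $X \cap \omega = \emptyset$ and $X \setminus I \subseteq U \setminus T$. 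The forward direction is closure monotonicity plus the cell bound: $t \in \mathrm{cl}(I) \subseteq \mathrm{cl}(X)$ forces $\ell(X) = t$. The reverse direction is a greedy augmentation: if $t \notin \mathrm{cl}_Z(I)$, then $I \cup \{t\}$ is independent and I extend it to a basis $B$ of $Z$ one cell $\omega'$ at a time, choosing a new element in $\omega' \setminus (T \cup \mathrm{cl}_Z(J))$ for the current independent subtransversal $J$; the cell bound excludes at most one element of $\omega'$, $|T \cap \omega'| = 1$ excludes one more, and $|\omega'| \geq 3$ guarantees at least $|\omega'| - 2 \geq 1$ valid choices. Then $B \cap T = (I \cap T) \cup \{t\}$, so $X := B \setminus \{t\}$ witnesses $\ell_Z(X) \neq t$.

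Every $X$ in the key claim has $|X \cap T| = |I \cap T| \leq k - 1$, and for each $a \in \omega \setminus \{t\} \subseteq U \setminus T$, also $|(X \cup \{a\}) \cap T| \leq k - 1$. The inductive hypothesis therefore matches $r_{Z_1}$ with $r_{Z_2}$ on $X$ and on $X \cup \{a\}$ for every such $a$; the uniqueness of $\ell_Z$ then forces $\ell_{Z_1}(X) = t \iff \ell_{Z_2}(X) = t$ for every $X$ in the claim, and the claim yields $t \in \mathrm{cl}_{Z_1}(I) \iff t \in \mathrm{cl}_{Z_2}(I)$, closing the induction. The principal obstacle is the greedy augmentation inside the key claim, which must simultaneously maintain independence and avoid $T$ outside $I \cup \{t\}$; the hypothesis $|\omega| \geq 3$ is exactly what provides the slack for this, and it is necessary, since the corresponding uniqueness statement fails already for $2$-matroids.
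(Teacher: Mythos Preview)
Your proof is correct. The paper does not supply its own argument for this proposition; it is quoted from \cite{BH3} without proof, so there is nothing in the present paper to compare against directly.

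Your induction on $|Y \cap T|$, reducing to the question of whether $t \in \mathrm{cl}_{Z_i}(I)$ and then characterizing this via the tightness functions $\ell_Z(X)$ on near-transversals $X \supseteq I$ that avoid $T$ outside $I$, is a clean self-contained argument. The greedy extension in the reverse direction of the key claim is where the hypothesis $|\omega| \geq 3$ earns its keep: the cell bound (from tightness, after extending $J$ to a near-transversal) excludes at most one element of $\omega'$, and $T$ excludes exactly one, leaving a valid choice. The concluding deduction---that $\ell_{Z_1}(X) = t \iff \ell_{Z_2}(X) = t$ because a witness $a \in \omega \setminus \{t\}$ for the negation satisfies $|(X \cup \{a\}) \cap T| \leq k-1$ and hence falls under the inductive hypothesis---is sound.
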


The next lemma concerns fundamental graphs of binary matroids. More specifically, the bipartite graph $G$ of the next lemma is called a \emph{fundamental graph} of $M$.
\begin{lemma}\label{lem:matroid_bipartite}
Let $M$ be a binary matroid. Then $\mathcal{Z}_{2}(M)$ is isomorphic to $\mathcal{Z}_{2}(G)$ for some bipartite graph $G$. Moreover, if $\mathcal{Z}_{2}(M)$ is isomorphic to $\mathcal{Z}_{2}(G)$ for some graph $G$, then $G$ is bipartite.
\end{lemma}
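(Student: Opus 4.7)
For the first claim, my plan is to construct $G$ as the standard fundamental graph associated to a $GF(2)$-representation of $M$. Fix a basis $B \subseteq E(M)$ and write $M$ as $[I_r \mid X]$ over $GF(2)$, with the columns of $I_r$ indexed by $B$ and the columns of $X$ indexed by $E(M) \setminus B$. Let $G$ be the bipartite graph on $V(G) = B \sqcup (E(M)\setminus B)$ in which $b \in B$ and $f \in E(M)\setminus B$ are joined precisely when $X_{bf}=1$; then $A(G)$ has zero diagonal blocks with $X$ and $X^{T}$ on the off-diagonal. After a suitable row reordering, $IA(G)$ and the block-diagonal matrix representing $\varphi_1(M^*) \oplus \varphi_2(M)$ (built from the standard forms $[X^{T}\mid I]$ for $M^*$ and $[I\mid X]$ for $M$) share exactly the same set of columns, via the bijection that, for $e \in B$, pairs $(e,1)$ with $\chi_G(e)$ and $(e,2)$ with $\phi_G(e)$, and for $e \in E(M)\setminus B$, pairs $(e,1)$ with $\phi_G(e)$ and $(e,2)$ with $\chi_G(e)$. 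This bijection sends each block $\{(e,1),(e,2)\}$ of $\Omega(\mathcal{Z}_2(M))$ to the block $\{\phi_G(e),\chi_G(e)\}$ of $\Omega(\mathcal{Z}_2(G))$ and identifies the two sheltering matroids, so it is a multimatroid isomorphism $\mathcal{Z}_2(M) \cong \mathcal{Z}_2(G)$.

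For the converse, suppose $\varphi\colon \mathcal{Z}_2(M) \to \mathcal{Z}_2(G)$ is a multimatroid isomorphism. Transporting the strict binary sheltering matroid $M^*\oplus M$ of $\mathcal{Z}_2(M)$ through $\varphi$ yields a strict $GF(2)$-sheltering matroid $N$ of $\mathcal{Z}_2(G)$. By Lemma~\ref{standardtwo} applied over $GF(2)$, any such $N$ is represented by a matrix $[I \mid A]$ whose support coincides with that of $A(G)$, and the only $GF(2)$-matrix meeting this support constraint is $A(G)$ itself, so $N = M[IA(G)]$. The two summand transversals $\{(e,1)\}_{e \in E(M)}$ and $\{(e,2)\}_{e \in E(M)}$ of $\Omega(\mathcal{Z}_2(M))$ then transport to transversals $T_1, T_2$ of $\Omega(\mathcal{Z}_2(G))$, and $M[IA(G)] = N$ decomposes as the direct sum of its restrictions to $T_1$ and to $T_2$; in particular, every circuit of $M[IA(G)]$ lies entirely in one of $T_1$, $T_2$.

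To finish, define $U_i = \{v \in V(G) : \phi_G(v) \in T_i\}$, so that $V(G) = U_1 \sqcup U_2$. For each vertex $w$, the $GF(2)$ identity $\chi_G(w) = \varepsilon_w\phi_G(w) + \sum_{u \in N(w)} \phi_G(u)$, where $\varepsilon_w = 1$ exactly when $w$ carries a loop and $N(w)$ denotes the open neighborhood, exhibits $C_w = \{\chi_G(w)\} \cup \{\phi_G(u) : u \in N(w)\}$ as a circuit of $M[IA(G)]$ when $w$ is loopless, and exhibits $C_w \cup \{\phi_G(w)\}$ as a circuit when $w$ carries a loop. The direct-sum constraint forbids circuits that cross $T_1 \sqcup T_2$: the loop circuit, containing both $\phi_G(w) \in T_i$ and $\chi_G(w) \in T_{3-i}$, is already such a crossing, which rules out loops in $G$; and for loopless $w$, containment of $C_w$ in a single $T_i$ forces every neighbor $u$ of $w$ to have $\phi_G(u)$ in the transversal opposite to $\phi_G(w)$, i.e., $u$ lies in the part of $V(G) = U_1 \sqcup U_2$ opposite to $w$. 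Hence $U_1 \sqcup U_2$ is a valid bipartition and $G$ is bipartite. The main obstacle is the uniqueness step in the second paragraph: the direct-sum rigidity of $M^* \oplus M$ translates into graph-theoretic bipartiteness only once the transported sheltering matroid is pinned down to $M[IA(G)]$ by Lemma~\ref{standardtwo}, so that the transversal partition $T_1 \sqcup T_2$ becomes a genuine decomposition of the known matroid $M[IA(G)]$.
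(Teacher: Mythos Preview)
Your argument for the first assertion is essentially the paper's: both build $G$ as the fundamental graph of a binary representation $[I\mid X]$ of $M$ and identify the column sets of $IA(G)$ and of the block representation of $\varphi_1(M^*)\oplus\varphi_2(M)$, respecting the skew-class partition.

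For the converse your route genuinely diverges from the paper's. The paper never identifies the transported sheltering matroid; it argues directly at the multimatroid level. Since every circuit of $\mathcal{Z}_2(M)$ lies in $T_1$ or $T_2$, every circuit of $\mathcal{Z}_2(G)$ lies in $f(T_1)$ or $f(T_2)$; the fundamental circuit of $\chi_G(v)$ in $M[IA(G)]$ is a \emph{subtransversal} circuit whenever $v$ is loopless, and its containment in one $f(T_i)$ forces bipartiteness. Loops are ruled out separately by a tightness argument (if $G$ had a loop, $\mathcal{Z}_2(G)$ would fail to be tight, contradicting $\mathcal{Z}_2(M)$ being tight). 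Your approach instead pins down the transported sheltering matroid as exactly $M[IA(G)]$ and then exploits the direct-sum decomposition of $M[IA(G)]$ itself: this lets you use the non-subtransversal loop circuit (which contains both $\phi_G(w)$ and $\chi_G(w)$) to kill loops without invoking tightness. That is a pleasant self-contained alternative.

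There is one soft spot in your second paragraph. Lemma~\ref{standardtwo} is stated only for \emph{simple} $G$, and its conclusion is existential (``there is a matrix $A$\dots''), not that every sheltering matroid has this form. You are really using two things that go beyond the lemma as stated: (i) its proof begins from an arbitrary representation $Q$, so the given $N$ itself can be put in the form $[I\mid A]$; and (ii) in the strict case the bottom block $A'$ is absent, so the diagonal argument extends to looped $G$ with $A_{vv}$ matching the loop status of $v$. Both points are true, and over $GF(2)$ they do force $A=A(G)$ and hence $N=M[IA(G)]$; but as written you are invoking a lemma whose hypothesis (simplicity of $G$) is precisely what you are trying to establish. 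A sentence making explicit that you are using the \emph{proof} of Lemma~\ref{standardtwo}, adapted to the strict case so that the simplicity hypothesis is unnecessary, would close this gap.
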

\begin{proof}
If $(I \ A)$ is a representation of $M$, then $(A^T \ I)$ is a representation of $M^*$. Hence $\mathcal{Z}_{2}(M)$ is sheltered by
\[
\begin{pmatrix}
I & A & 0 & 0 \\
0 & 0 & A^T & I
\end{pmatrix}.
\]
By rearranging columns within skew classes, we obtain the matrix
\[
\begin{pmatrix}
I & 0 & 0 & A \\
0 & I & A^T & 0
\end{pmatrix},
\]
which represents $\mathcal{Z}_{2}(G)$ for some graph $G$ with
\[
\begin{pmatrix}
0 & A \\
A^T & 0
\end{pmatrix}
\]
as its adjacency matrix. We observe that $G$ is bipartite.

Finally, assume $\mathcal{Z}_{2}(M)$ is isomorphic to $\mathcal{Z}_{2}(G)$ for some graph $G$. We have that $\mathcal{Z}_{2}(G)$ is represented by
\[
\bordermatrix{~ & \phi_G(V(G)) & \chi_G(V(G)) \cr & I & A(G)}.
\]
Let $f$ be an isomorphism from $\mathcal{Z}_{2}(M)$ to $\mathcal{Z}_{2}(G)$.
Let $T_i = \{(e,i) \mid e \in E(M)\}$ for $i \in \{1,2\}$. Assume that the principal submatrix $A(G)[\chi_G^{-1}(f(T_i))]$ of $A(G)$ induced by $\chi_G^{-1}(f(T_i))$ for some $i \in \{1,2\}$ has a nonzero entry $a_{u,v}$. Since $\phi_G(V(G))$ is a basis of $M[IAS(G)]$, there is a circuit $C \subseteq \phi_G(V(G)) \cup \{\chi_G(v)\}$ with $\chi_G(v) \in C$. Note that $\chi_G(v) \in C \cap f(T_i) \neq \emptyset$ and $\phi_G(u) \in C \cap f(T_j) \neq \emptyset$ with $\{i,j\} = \{1,2\}$. If $u \neq v$, then $C$ is a subtransversal and so $C$ is a circuit of $\mathcal{Z}_{2}(G)$. Consequently, $C' = f^{-1}(C)$ is a circuit of $\mathcal{Z}_{2}(M)$ with $C \cap T_1 \neq \emptyset$ and $C \cap T_2 \neq \emptyset$. This contradicts the fact that $\mathcal{Z}_{2}(M)$ is sheltered by the direct sum of $\varphi_1(M^*)$ and $\varphi_2(M)$. Finally, if $u = v$, then $\mathcal{Z}_{2}(G)$ is not tight by \cite[Proposition 17]{BT1}, contradicting the fact that $\mathcal{Z}_{2}(G)$ is isomorphic to the tight 2-matroid $\mathcal{Z}_{2}(M)$.
Thus the principal submatrices $A(G)[\chi_G^{-1}(f(T_1))]$ and $A(G)[\chi_G^{-1}(f(T_2))]$ are both zero matrices and so $G$ is bipartite.
\end{proof}

We now recall that planar matroids correspond to fundamental graphs that are circle graphs.
\begin{proposition}[Proposition~6 of \cite{F}] \label{prop:Fraysseix}
If $G$ is a bipartite circle graph, then $\mathcal{Z}_{2}(G)$ is isomorphic to $\mathcal{Z}_{2}(M)$ for some planar matroid $M$.

If $M$ is a planar matroid, then $\mathcal{Z}_{2}(M)$ is isomorphic to $\mathcal{Z}_{2}(G)$ for some bipartite circle graph $G$.
\end{proposition}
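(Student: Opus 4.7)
The plan is to reduce the statement, via Lemma~\ref{lem:matroid_bipartite}, to the equivalent claim that a binary matroid $M$ is planar if and only if one (equivalently, every) bipartite fundamental graph of $M$ is a circle graph. Indeed, Lemma~\ref{lem:matroid_bipartite} already establishes that, for bipartite $G$, the condition $\mathcal{Z}_{2}(G) \cong \mathcal{Z}_{2}(M)$ is equivalent to $G$ being a bipartite fundamental graph of $M$. So the two implications of the proposition correspond to the two directions of this reformulated statement.

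For the direction planar $\Rightarrow$ circle, I would fix a planar graph $H$ with $M = M(H)$, pick a planar embedding of $H$ and a spanning tree $T$ of $H$, and construct a $4$-regular planar graph $F$ by doubling each edge of $T$ (equivalently, through a modified medial-graph construction applied to the pair $(H,T)$). The planar embedding of $H$ then supplies a canonical Euler system $C$ of $F$, determined by the cyclic rotation of incident edges at each vertex. The key verification is that two non-tree edges of $H$ yield interlaced fundamental cycles along $T$ if and only if the corresponding vertices of $F$ are interlaced with respect to $C$; this would identify $\mathcal{I}(C)$ with the bipartite fundamental graph of $M$ with respect to $T$, proving it to be a circle graph.

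For the converse direction (circle $\Rightarrow$ planar), I would invoke Whitney's excluded-minor theorem: a binary matroid is planar iff it has no minor in $\{F_7, F_7^*, M(K_5), M(K_{3,3}), M^*(K_5), M^*(K_{3,3})\}$. Using that matroid minors of $M$ correspond to $\mathcal{Z}_{2}$-minors of $\mathcal{Z}_{2}(M) = \mathcal{Z}_{2}(G)$, and that such $\mathcal{Z}_{2}$-minors correspond, via Lemma~\ref{lem:matroid_bipartite}, to bipartite pivot-minors of $G$, any forbidden matroid minor $N$ of $M$ would produce a bipartite pivot-minor $G'$ of $G$ that is a fundamental graph of $N$. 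Since circle graphs are closed under vertex-minors, it suffices to rule out each such $G'$. For $N = F_7$ (or $N = F_7^*$), the graph $G'$ satisfies $\mathcal{Z}_{2}(G') \cong \mathcal{Z}_{2}(BW_3)$, so if $G'$ were a circle graph then Theorem~\ref{main} together with Lemma~\ref{obvious} would yield a representation over some field of characteristic $\neq 2$, contradicting Proposition~\ref{obstruct}. For $N \in \{M(K_5), M(K_{3,3})\}$, the explicit calculations recorded at the end of Section~\ref{sec:proof1} show that the bipartite fundamental graphs $G_1$ and $G_2$ contain $BW_3$ and $W_5$ as vertex-minors, respectively, so Bouchet's Theorem~\ref{Bthm} forbids them from being circle graphs; duality then settles $M^*(K_5)$ and $M^*(K_{3,3})$.

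The hardest step is the planar $\Rightarrow$ circle direction, specifically the combinatorial-topological verification that the Euler system $C$ extracted from the planar embedding of $(H,T)$ realizes the fundamental graph of $M(H)$ as its interlacement graph. This lies at the heart of de Fraysseix's original argument and requires careful bookkeeping of how fundamental cycles wind around the doubled spanning tree. The converse direction, by contrast, is almost formal once one combines Whitney's theorem, Bouchet's theorem, and Proposition~\ref{obstruct}, together with the minor-correspondence between $M$ and the pivot-equivalence class of its bipartite fundamental graph.
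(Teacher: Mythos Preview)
The paper does not prove this proposition; it is quoted as Proposition~6 of de~Fraysseix~\cite{F} and used as a black box. So there is no ``paper's own proof'' to compare against, and your proposal must be judged on its own merits.

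Your reduction via Lemma~\ref{lem:matroid_bipartite} to the statement ``a binary matroid is planar iff some (equivalently every) bipartite fundamental graph of it is a circle graph'' is correct, and your circle $\Rightarrow$ planar argument is sound: the excluded-minor list for planarity among binary matroids is exactly $\{F_7,F_7^{*},M(K_5),M^{*}(K_5),M(K_{3,3}),M^{*}(K_{3,3})\}$, matroid minors of $M$ do correspond to pivot-minors of its fundamental graph, and the paper supplies (at the end of Section~\ref{sec:proof1} and in the remark $\mathcal{Z}_{2}(BW_3)=\mathcal{Z}_{2}(F_7)$) exactly the ingredients you need to exclude each of these. That direction is fine, if somewhat roundabout; you could shorten the $F_7$ case by observing directly that any fundamental graph of $F_7$ is pivot-equivalent to $BW_3$ and hence not a circle graph by Theorem~\ref{Bthm}.

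The planar $\Rightarrow$ circle direction, however, has a genuine gap: your construction of $F$ does not work as stated. ``Doubling each edge of $T$'' in $H$ does not produce a $4$-regular graph. Already for $H=K_3$ with $T$ a two-edge path, the doubled graph has two vertices of degree~$3$. What you presumably want is a graph with vertex set $E(H)$, not $V(H)$: the medial graph $m(H)$ of the plane graph $H$ is $4$-regular with $V(m(H))=E(H)$, and the relevant Euler system is the one whose circuits trace the face boundaries. De~Fraysseix's argument (and the closely related ``tubular neighbourhood of $T$'' picture) runs along these lines, but the identification of the resulting interlacement graph with the fundamental graph of $M(H)$ relative to $T$ is precisely the nontrivial content you have not supplied. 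Until you specify a correct $4$-regular $F$ and an Euler system $C$, and actually verify that $\mathcal{I}(C)$ is the fundamental graph, this direction remains a sketch rather than a proof.
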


We call a looped simple graph $G$ a \emph{looped circle graph} if $G$ is obtained from a circle graph by possibly adding some loops.
\begin{proposition} [Corollary~43 in \cite{BT1}] \label{prop:lc_3m}
Let $G$ be a looped simple graph. For every transversal $T$ of $\mathcal{Z}_{3}(G)$, $\mathcal{Z}_{3}(G)-T$ is isomorphic to $\mathcal{Z}_{2}(G')$ for some looped simple graph $G'$ locally equivalent to $G$.
\end{proposition}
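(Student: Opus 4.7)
The plan is to reduce the statement to the tautological case where $T$ consists entirely of $\psi$-elements. Concretely, I would show that for every transversal $T$ of $\mathcal{Z}_3(G)$ there exist a looped simple graph $G'$ locally equivalent to $G$ (up to isomorphism) together with an isomorphism $f:\mathcal{Z}_3(G)\to\mathcal{Z}_3(G')$ such that $f(T)=\{\psi_{G'}(v)\mid v\in V(G')\}$. Given such an $f$, one has
\[
\mathcal{Z}_3(G)-T \;\cong\; \mathcal{Z}_3(G')-f(T) \;=\; \mathcal{Z}_2(G'),
\]
where the final equality follows from the definition of $\mathcal{Z}_2(G')$ as the restriction of $\mathcal{Z}_3(G')$ to its $\phi$- and $\chi$-elements.

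To construct $G'$ and $f$, I would appeal to the isomorphisms $\mathcal{Z}_3(G)\cong\mathcal{Z}_3(G^v_\ell)\cong\mathcal{Z}_3(G^v_s)\cong\mathcal{Z}_3(G^v_{ns})$ supplied by \cite[Theorem~2]{Tnewnew}. The simplest of these is loop complementation: a direct comparison of the matrices $IAS(G)=(I\;A(G)\;I+A(G))$ and $IAS(G^v_\ell)=(I\;A(G)+E_{vv}\;I+A(G)+E_{vv})$ shows that the $v$-columns of the $\chi$- and $\psi$-blocks are interchanged while every other column is fixed; hence loop complementation at $v$ induces the transposition $\chi_G(v)\leftrightarrow\psi_G(v)$, with identity action everywhere else. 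The simple and non-simple local complementations at $v$ induce more intricate bijections: they permute labels in the skew class at $v$ but also produce coordinated $\chi\leftrightarrow\psi$ swaps at neighbours of $v$. By post-composing with loop complementations at those neighbours — each of which independently swaps $\chi_G(w)\leftrightarrow\psi_G(w)$ with no further side effects — one can cancel the unwanted swaps and assemble \emph{compound} operations realising each of the three transpositions of $\{\phi_G(v),\chi_G(v),\psi_G(v)\}$ while acting as the identity on every other skew class.

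Granted this side-effect-free local controllability, I would proceed by induction on the quantity $d(T):=|\{v\in V(G):T\cap\{\phi_G(v),\chi_G(v)\}\neq\emptyset\}|$. The base case $d(T)=0$ is trivial, since $T$ is already the all-$\psi$ transversal. If $d(T)>0$, pick any $v$ at which $T$ chooses $\phi_G(v)$ or $\chi_G(v)$, and apply the compound operation at $v$ implementing the transposition that exchanges the chosen element with $\psi_G(v)$; this produces a locally equivalent graph $G''$ together with an isomorphism $\mathcal{Z}_3(G)\to\mathcal{Z}_3(G'')$ carrying $T$ to a transversal with $d$-value exactly $d(T)-1$. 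Iterating terminates at a graph $G'$ with $T$ sent to the all-$\psi$ transversal.

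The main obstacle is precisely the construction and verification of the compound operations — namely, confirming that the side effects of simple and non-simple local complementation at $v$ on the skew classes at neighbours of $v$ are always of the form ``swap $\chi$ and $\psi$,'' and therefore can be cancelled by loop complementations at those neighbours. This boils down to a careful comparison of the matrices $IAS(G^v_s)$ and $IAS(G^v_{ns})$ against $IAS(G)$ (noting that the adjacency matrix changes by the symmetric rank-one update $N_vN_v^{T}$, with or without a diagonal correction), and is in essence the content of Theorem~2 of \cite{Tnewnew}. Once this calculation is in place, the inductive argument above delivers the proposition.
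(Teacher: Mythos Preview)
Your proposal is correct and amounts to a direct proof of the result that the paper simply cites from \cite{BT1}. One simplification: the compound operations with cancellation are not actually needed. For an unlooped vertex $v$ (with $N_v$ the $v$-th column of $A(G)$), applying the row operation $R=I+N_ve_v^T$ to $IAS(G)$ over $GF(2)$ yields a matrix that agrees with $IAS(G^v_{ns})$ in every column except that the $\phi_G(v)$ and $\psi_G(v)$ columns are interchanged; hence non-simple local complementation at an unlooped $v$ already realises the transposition $\phi_G(v)\leftrightarrow\psi_G(v)$ with no side effects at neighbours. Together with loop complementation (which gives $\chi_G(v)\leftrightarrow\psi_G(v)$), you have two generating transpositions of $S_3$ on the skew class at $v$, each acting trivially on every other skew class, and a looped $v$ is handled by first toggling its loop. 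Your inductive reduction to the all-$\psi$ transversal then goes through without any cancellation step. (The $\chi\leftrightarrow\psi$ side effects you describe do appear for \emph{simple} local complementation, since $G^v_s$ is $G^v_{ns}$ followed by loop complementation at every neighbour of $v$; your cancellation scheme is valid there, just unnecessary if one uses the non-simple variant directly.)
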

\begin{proof}
Corollary~43 in \cite{BT1} shows that $\mathcal{Z}_{3}(G)-T$ is isomorphic to $\mathcal{Z}_{2}(G')$ for some looped simple graph $G'$. The proof of Corollary~43 in \cite{BT1} shows that $G'$ is actually locally equivalent to $G$.
\end{proof}

The main result of this subsection is the following.
\begin{theorem}
Let $M$ be a binary matroid. Then $M$ is planar if and only if $\mathcal{Z}_{3}(M)$ is regular.
\end{theorem}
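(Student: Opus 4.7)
My plan is to combine Theorem~\ref{thm:circle_graph_mm_reg}, which characterizes regular tight $3$-matroids as those arising from circle graphs, with Proposition~\ref{prop:Fraysseix}, which identifies planar binary matroids as those whose $\mathcal{Z}_2$ comes from a bipartite circle graph. The bridge between these two pieces is the uniqueness statement Proposition~\ref{prop:unique_tight_k+1_mm}, which ensures that a tight $3$-matroid is determined by what one sees after deleting any one transversal.

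For the forward direction, assume $M$ is planar. Proposition~\ref{prop:Fraysseix} yields a bipartite circle graph $G$ with $\mathcal{Z}_2(M) \cong \mathcal{Z}_2(G)$. Let $T$ denote the canonical transversal of $\mathcal{Z}_3(M)$ satisfying $\mathcal{Z}_3(M)-T = \mathcal{Z}_2(M)$, and let $T^{\psi} = \{\psi_G(v) \mid v \in V(G)\}$, so that $\mathcal{Z}_3(G)-T^{\psi} = \mathcal{Z}_2(G)$. Extend the isomorphism $\mathcal{Z}_2(M)\to\mathcal{Z}_2(G)$ to a skew-class-preserving bijection $\tilde f$ between the ground sets of $\mathcal{Z}_3(M)$ and $\mathcal{Z}_3(G)$ by mapping $T$ onto $T^{\psi}$ through the induced bijection between skew classes. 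Pulling $\mathcal{Z}_3(G)$ back along $\tilde f$ yields a tight $3$-matroid on the ground set of $\mathcal{Z}_3(M)$ that agrees with $\mathcal{Z}_3(M)$ after deleting $T$; Proposition~\ref{prop:unique_tight_k+1_mm} then identifies this pullback with $\mathcal{Z}_3(M)$, so $\mathcal{Z}_3(M)\cong\mathcal{Z}_3(G)$. Since $G$ is a circle graph, Theorem~\ref{thm:circle_graph_mm_reg} delivers regularity of $\mathcal{Z}_3(M)$.

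For the converse, assume $\mathcal{Z}_3(M)$ is regular. Theorem~\ref{thm:circle_graph_mm_reg} produces a circle graph $G$ with $\mathcal{Z}_3(M)\cong\mathcal{Z}_3(G)$. Transporting the canonical transversal $T$ of $\mathcal{Z}_3(M)$ to a transversal $T'$ of $\mathcal{Z}_3(G)$ via this isomorphism and applying Proposition~\ref{prop:lc_3m} produces a looped simple graph $G'$ locally equivalent to $G$ with $\mathcal{Z}_2(G')\cong\mathcal{Z}_3(G)-T'\cong\mathcal{Z}_2(M)$. Lemma~\ref{lem:matroid_bipartite} then forces $G'$ to be bipartite, hence simple. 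Since $\mathcal{Z}_3(G')\cong\mathcal{Z}_3(G)$ by Lemma~\ref{localeq} and $\mathcal{Z}_3(G)$ is regular, Theorem~\ref{thm:circle_graph_mm_reg}(\ref{item:excl_minor}) implies that $\mathcal{Z}_3(G')$ has no minor isomorphic to $\mathcal{Z}_3(W_5)$, $\mathcal{Z}_3(BW_3)$, or $\mathcal{Z}_3(W_7)$; Lemma~\ref{lem:vertex_minor_mm_minor} translates this into the absence of $W_5$, $BW_3$, and $W_7$ as vertex-minors of $G'$, so Bouchet's theorem (Theorem~\ref{Bthm}) makes $G'$ a bipartite circle graph. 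Proposition~\ref{prop:Fraysseix} then produces a planar matroid $M'$ with $\mathcal{Z}_2(M')\cong\mathcal{Z}_2(G')\cong\mathcal{Z}_2(M)$.

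The main obstacle is the final step: deducing that $M$ itself is planar from $\mathcal{Z}_2(M)\cong\mathcal{Z}_2(M')$ with $M'$ planar. To handle this, I would unpack the isomorphism. Any isomorphism of tight $2$-matroids respects skew classes and carries circuits to circuits; for a binary matroid $N$, the circuits of $\mathcal{Z}_2(N)$ are exactly the circuits of $\varphi_1(N^*)$ and $\varphi_2(N)$, each contained in one of the two ``copies'' of $E(N)$ comprising the sheltering direct sum. Consequently the swap-or-preserve pattern that the isomorphism induces on each skew class must be constant on every circuit of $M$ and of $M^*$, and hence on every connected component of $M$. On each component the isomorphism therefore identifies $M$ with a component of either $M'$ or $(M')^*$. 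Because a binary matroid is planar if and only if it is both graphic and cographic --- a condition preserved under duality, direct sums, and passage to components --- every component of $M$ is planar, and so is $M$.
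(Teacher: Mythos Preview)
Your proof is correct and follows the same overall architecture as the paper's: use Proposition~\ref{prop:Fraysseix} and Proposition~\ref{prop:unique_tight_k+1_mm} for the forward direction, and Theorem~\ref{thm:circle_graph_mm_reg} together with Proposition~\ref{prop:lc_3m} and Lemma~\ref{lem:matroid_bipartite} for the converse.

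Two points of comparison are worth noting. First, to conclude that $G'$ is a circle graph, the paper argues directly that local equivalence preserves the (looped) circle-graph property, whereas you route through regularity of $\mathcal{Z}_3(G')$, the excluded-minor condition of Theorem~\ref{thm:circle_graph_mm_reg}, Lemma~\ref{lem:vertex_minor_mm_minor}, and Bouchet's obstructions theorem. Both are valid; the paper's one-line appeal is shorter, while your route avoids needing the separate observation that circle graphs are closed under simple local complementation. Second, the paper ends the converse with the bare assertion ``Since $\mathcal{Z}_2(M')$ and $\mathcal{Z}_2(M)$ are isomorphic, $M$ is planar too,'' whereas you supply the missing justification: the circuits of $\mathcal{Z}_2(N)$ are exactly the circuits of $\varphi_1(N^*)$ and $\varphi_2(N)$, so any isomorphism $\mathcal{Z}_2(M)\cong\mathcal{Z}_2(M')$ forces each component of $M$ to be isomorphic to a component of $M'$ or of $(M')^*$, and planarity survives duals, restrictions, and direct sums. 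This is a genuine (and correct) elaboration of a step the paper leaves implicit.
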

\begin{proof}
If $M$ is planar, then by Proposition~\ref{prop:Fraysseix} we have that $\mathcal{Z}_2(M)$ is isomorphic to $\mathcal{Z}_{2}(G)$ for some circle graph $G$. By Proposition~\ref{prop:unique_tight_k+1_mm}, $\mathcal{Z}_{3}(M)$ is isomorphic to $\mathcal{Z}_{3}(G)$. By Theorem~\ref{thm:circle_graph_mm_reg}, $\mathcal{Z}_{3}(M)$ is regular.

Conversely, let $\mathcal{Z}_{3}(M)$ be regular. By Theorem~\ref{thm:circle_graph_mm_reg}, $\mathcal{Z}_{3}(M)$ is isomorphic to $\mathcal{Z}_{3}(G)$ for some circle graph $G$. Thus $\mathcal{Z}_{2}(M)$ is isomorphic to $\mathcal{Z}_{3}(G) - T$ for some transversal $T$ of $\mathcal{Z}_{3}(G)$. By Proposition~\ref{prop:lc_3m}, $\mathcal{Z}_{2}(M)$ is isomorphic to $\mathcal{Z}_{2}(G')$ for some looped simple graph $G'$ locally equivalent to $G$. Since $G'$ is locally equivalent to circle graph $G$, $G'$ is a looped circle graph (we use here that circle graphs are closed under simple local complement by, e.g., Theorem~\ref{Bthm}). By Lemma~\ref{lem:matroid_bipartite}, $G'$ is bipartite. Thus $G'$ does not have any loops. Hence $G'$ is a bipartite circle graph. By Proposition~\ref{prop:Fraysseix}, $\mathcal{Z}_{2}(G')$ is isomorphic to $\mathcal{Z}_{2}(M')$ for some planar matroid $M'$. Since $\mathcal{Z}_{2}(M')$ and $\mathcal{Z}_{2}(M)$ are isomorphic, $M$ is planar too.
\end{proof}


\begin{thebibliography}{99}

\bibitem {B} B. Bollob\'as, \emph{Modern Graph Theory}, Springer-Verlag, New York, 1998. \doi{10.1007/978-1-4612-0619-4}









\bibitem {Bu}A. Bouchet, \emph{Unimodularity and circle graphs}, Discrete
Math. \textbf{66} (1987), 203--208. \doi{10.1016/0012-365X(87)90132-4}



\bibitem {Bco}A. Bouchet, \emph{Circle graph obstructions}, J. Combin. Theory
Ser. B \textbf{60} (1994), 107--144. \doi{10.1006/jctb.1994.1008}

\bibitem {B1}A. Bouchet, \emph{Multimatroids. I. Coverings by independent
sets}, SIAM J. Discrete Math. \textbf{10} (1997), 626--646. \doi{10.1137/S0895480193242591}

\bibitem {B2}A. Bouchet, \textit{Multimatroids. II. Orthogonality, minors and
connectivity}, Electron. J. Combin. \textbf{5} (1998), \#R8. \url{http://www.combinatorics.org/ojs/index.php/eljc/article/view/v5i1r8}

\bibitem {B3}A. Bouchet, \textit{Multimatroids III. Tightness and fundamental
graphs,} European J. Combin. \textbf{22} (2001), 657-677. \doi{10.1006/eujc.2000.0486}

\bibitem {B4}A. Bouchet, \emph{Multimatroids. IV. Chain-group representations}, Linear Algebra Appl. \textbf{277} (1998), 271--289. \doi{10.1016/S0024-3795(97)10041-6}







\bibitem {RB1}R. Brijder, \emph{Orienting transversals and transition polynomials for multimatroids}, [\arxiv{1605.04244}], 2016.



\bibitem {BH3}R. Brijder and H. J. Hoogeboom, \emph{Interlace polynomials for
multimatroids and delta-matroids}, European J. Combin. \textbf{40} (2014), 142-167. \doi{10.1016/j.ejc.2014.03.005}



\bibitem {BT1}R. Brijder and L. Traldi, \emph{Isotropic matroids I:
Multimatroids and neighborhoods}, Electron. J. Combin. \textbf{23}(4) (2016), \#P4.1. \url{http://www.combinatorics.org/ojs/index.php/eljc/article/view/v23i4p1}

\bibitem {BT2}R. Brijder and L. Traldi, \emph{Isotropic matroids II:
Circle graphs}, Electron. J. Combin. \textbf{23}(4) (2016), \#P4.2. \url{http://www.combinatorics.org/ojs/index.php/eljc/article/view/v23i4p2}

\bibitem {BT3}R. Brijder and L. Traldi, \emph{Isotropic matroids. III. Connectivity}, [\arxiv{1602.03899}], 2016.

\bibitem {EI}S. Even and A. Itai, \emph{Queues, stacks, and graphs}, in:
\emph{Theory of Machines and Computations} (Proc. Internat. Sympos., Technion,
Haifa, 1971), pp. 71--86. Academic Press, New York, 1971. \doi{10.1016/B978-0-12-417750-5.50011-7}



\bibitem {F}H. de Fraysseix, \emph{A characterization of circle graphs},
European J. Combin. \textbf{5} (1984), 223--238. \doi{10.1016/S0195-6698(84)80005-0}

\bibitem{G} E. Gasse, \emph{A proof of a circle graph characterization}, Discrete Math. \textbf{173} (1997), 223-238. \doi{10.1016/S0012-365X(97)00068-X}

\bibitem {GeelenPhD}J. F. Geelen, \emph{Matchings, matroids, and unimodular
matrices}, PhD thesis, University of Waterloo, 1995. \url{https://www.math.uwaterloo.ca/~jfgeelen/Publications/th.pdf}

\bibitem {GeelenNaji}J. Geelen and E. Lee, \emph{Naji's characterization of circle graphs}, J. Graph Theory, to appear. \doi{10.1002/jgt.22466}



\bibitem {GPTC}E. Gioan, C. Paul, M. Tedder and D.\ Corneil, \emph{Practical
and efficient circle graph recognition}, Algorithmica \textbf{69} (2014), 759--788. \doi{10.1007/s00453-013-9745-8}





\bibitem {Jo}J. Jonsson, \emph{On the number of Euler trails in directed
graphs}, Math. Scand. \textbf{90} (2002), 191-214. \doi{10.7146/math.scand.a-14370}

\bibitem {K}A. Kotzig, \emph{Eulerian lines in finite 4-valent graphs and
their transformations}, in: \emph{Theory of Graphs} (Proc. Colloq., Tihany, 1966), Academic Press, New York, 1968, pp. 219--230.

\bibitem {Lau}J. Lauri, \emph{On a formula for the number of Euler trails for
a class of digraphs}, Discrete Math. \textbf{163} (1997), 307-312. \doi{10.1016/0012-365X(95)00345-W}

\bibitem {MP}N. Macris and J. V. Pul\'{e}, \emph{An alternative formula for
the number of Euler trails for a class of digraphs}, Discrete Math.
\textbf{154} (1996), 301-305. \doi{10.1016/0012-365X(94)00333-E}

\bibitem{N1} W. Naji, \emph{Graphes de cordes: une caracterisation et ses applications} (Th\`ese), Grenoble, 1985.

\bibitem {N}W. Naji, \emph{Reconnaissance des graphes de cordes}, Discrete
Math. \textbf{54} (1985), 329--337. \doi{10.1016/0012-365X(85)90117-7}

\bibitem {O}J.\ G. Oxley, \emph{Matroid Theory}, Second Edition, Oxford Univ. Press, Oxford, 2011. \doi{10.1093/acprof:oso/9780198566946.001.0001}



\bibitem {RR}R. C. Read and P. Rosenstiehl, \emph{On the Gauss crossing
problem}, in: \emph{Combinatorics} (Proc. Fifth Hungarian Colloq., Keszthely,
1976), Vol. II, Colloq. Math. Soc. J\'{a}nos Bolyai, 18, North-Holland,
Amsterdam-New York, 1978, pp. 843--876.

\bibitem {Sp}J. Spinrad, \emph{Recognition of circle graphs}, J. Algorithms
\textbf{16} (1994), 264--282. \doi{10.1006/jagm.1994.1012}

\bibitem {Tnew}L. Traldi, \emph{Interlacement in 4-regular graphs: a new
approach using nonsymmetric matrices}, Contrib. Discrete Math. \textbf{9}
(2014), 85--97. \url{http://hdl.handle.net/10515/sy5j09wm4}

\bibitem {Tnewnew}L. Traldi, \emph{Binary matroids and local complementation},
European J. Combin. \textbf{45} (2015), 21--40. \doi{10.1016/j.ejc.2014.10.001}

\bibitem {Tra}L. Traldi, \emph{The transition matroid of a 4-regular graph: an
introduction}, European J. Combin. \textbf{50} (2015), 180--207. \doi{10.1016/j.ejc.2015.03.016}

\bibitem {Tn}L. Traldi, \emph{Notes on a theorem of Naji},
Discrete Math. \textbf{340} (2017), 3217--3234. \doi{10.1016/j.disc.2016.07.019}

\bibitem {Tsign}L. Traldi, \emph{Circuit partitions and signed interlacement in 4-regular graphs}, [\arxiv{1607.04233}], 2016.

\bibitem {Z}B. Zelinka, \emph{The graph of the system of chords of a given
circle}, Mat.-Fyz. \v{C}asopis Sloven. Akad. Vied \textbf{15} (1965), 273--279. \url{http://eudml.org/doc/29604}
\end{thebibliography}
\end{document}